\let\ams@starttoc\@starttoc
\let\@starttoc\ams@starttoc
\patchcmd{\@starttoc}{\makeatletter}{\makeatletter\parskip\z@}{}{}
\theoremstyle{definition}
\newtheorem{theorem}{Theorem}[section]
\newtheorem{definition}[theorem]{Definition}
\newtheorem{lemma}[theorem]{Lemma}
\newtheorem{corollary}[theorem]{Corollary}
\newtheorem{proposition}[theorem]{Proposition}
\theoremstyle{remark}
\newtheorem{example}[theorem]{Example}
\newtheorem{remark}[theorem]{Remark}
\newtheorem{question}[theorem]{Question}
\DeclareMathOperator{\LMP}{LMP}
\DeclareMathOperator{\GMP}{GMP}
\DeclareMathOperator{\VGMP}{VGMP}
\renewcommand{\epsilon}{\varepsilon}
\newcommand{\rank}{\operatorname{rank}}
\newcommand{\codim}{\operatorname{codim}}
\newcommand{\Der}{\operatorname{Der}}
\newcommand\CA{{\mathscr A}}
\newcommand\CE{{\mathscr E}}
\newcommand\ZZ{{\mathbb Z}}
\newcommand\QQ{{\mathbb Q}}
\newcommand\KK{{\mathbb K}}
\newcommand\CB{{\mathscr B}}
\newcommand\BBQ{{\mathbb Q}}
\newcommand\BBZ{{\mathbb Z}}
\subjclass{52C35, 32S22, 51F15}
\keywords{Free arrangement, Free multiarrangement, Extension of free multiarrangement, Coxeter arrangement.}
\begin{document}
\title[Extendability of the $B_2$-arrangement]
{Extendability of the $B_2$-arrangement}

\author[T.~Hoge]{Torsten Hoge}
\address
{Fakult\"at f\"ur Mathematik,
	Ruhr-Universit\"at Bochum,
	D-44780 Bochum, Germany}
\email{torsten.hoge@rub.de}

\author[S.~Maehara]{Shota Maehara}
\address
{Joint Graduate School of Mathematics for Innovation, Kyushu University, 744 Motooka Nishi-ku Fukuoka 819-0395, Japan}
\email{maehara.shota.027@s.kyushu-u.ac.jp}

\author[S.~Wiesner]{Sven Wiesner}
\address
{Fakult\"at f\"ur Mathematik,
	Ruhr-Universit\"at Bochum,
	D-44780 Bochum, Germany}
\email{sven.wiesner@rub.de}

\begin{abstract}
    Let $(\CA,m)$ be a free multiarrangement, and let $\CE$ be an extension of $(\CA,m)$. It is well known that if $\CA$ is the Coxeter arrangement of type $A_2$, then a free extension of $(\CA,m)$ always exists. In this work, we demonstrate that if $\CA$ is the Coxeter arrangement of type $B_2$, there exist infinitely many multiplicities for which no free extension of $(\CA,m)$ exists. This result has immediate consequences for the existence of free extensions in higher rank.
\end{abstract}

\maketitle

\section{Introduction}
For a given hyperplane arrangement $\CA$, Saito introduced the module of logarithmic vector fields $D(\CA)$ in \cite{MR0586450}. A famous and still open conjecture by Terao asserts that the freeness of $D(\CA)$ depends solely on the combinatorial structure of $\CA$. It is known that if $D(\CA)$ is free, then the degrees of a homogeneous basis of $D(\CA)$ are uniquely determined, although the generators themselves need not be. These degrees are in one-to-one correspondence with the roots of the characteristic polynomial $\chi(\CA;t)$, whose definition depends only on the combinatorial properties of $\CA$. Ziegler introduced the notion of a multiarrangement $(\CA, m)$ in \cite{MR1000610}, where $\CA$ is a hyperplane arrangement and $m: \CA \to \ZZ_{\geq 0}$ is a multiplicity function. He generalized the module of logarithmic vector fields from $D(\CA)$ to $D(\CA,m)$, and showed that if $D(\CA)$ is free, then there exists a canonical multiarrangement $(\CA^H, m^H)$ such that $D(\CA^H, m^H)$ is a free module. The multiplicity function $m^H$ is defined in a purely combinatorial manner. Moreover, the degrees of a homogeneous basis of $D(\CA)$ coincide with the degrees of a homogeneous basis of $D(\CA^H,m^H)$. This holds despite the fact that, for an arbitrary multiarrangement $(\CA,m)$, the degrees of a minimal homogeneous set of generators for $D(\CA,m)$ are not determined by the combinatorial data of $\CA$. Subsequently, Yoshinaga posed the question in \cite{MR3025868} of whether, for every free multiarrangement $(\CA, m)$, there exists a free simple arrangement $\CE$ such that $(\CE^H,m^H)=(\CA, m)$ for some $H\in\CE$. If such an arrangement $\CE$ exists, then we call $\CE$ a \emph{free extension} of $(\CA, m)$. It is known that if $\CA$ is the Coxeter arrangement $\CA(A_2)$ of type $A_2$, and $m$ is an arbitrary multiplicity on $\CA$, then a free extension of $(\CA, m)$ always exists (\cite[Thm.~2.5]{MR3025868} and \cite[Lem.~5.8]{2406.19866}). 
Building on this, one might expect that a free extension exists for every free multiarrangement $(\CA, m)$. Given an arbitrary free multiarrangement $(\CA,m)$ and a corresponding free extension $\CE$, this would provide a means to determine the degrees of a homogeneous set of generators of $D(\CA,m)$ by analyzing the combinatorial structure of $D(\CE)$. In general, determining these degrees for $D(\CA,m)$ is a highly nontrivial problem; even among Coxeter arrangements of rank two, a complete classification is currently only known for the Coxeter arrangement of type $A_2$. In this paper, we focus on the Coxeter arrangement $\CA(B_2)$ of type $B_2$ and demonstrate that there exist infinitely many multiplicities $m_k$ for which no free extension of the corresponding multiarrangement $(\CA(B_2), m_k)$ exists. The main result of this article is as follows.

\begin{theorem}\label{theorem:MainResult}
    Let $\CA=\CA(B_2)$ be the Coxeter arrangement of type $B_2$ with $Q(\CA)=xy(x-y)(x+y)$. For $k\in\mathbb{Z}_{\geq 0}$ define the Coxeter multiarrangement $$Q(\CA,m_k)=x^2y^k(x-y)^1(x+y)^k.$$ If $k\geq 4$, then there does not exist a free extension of $(\CA,m_k)$.
\end{theorem}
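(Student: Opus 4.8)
The plan is to determine the exponents of $(\CA(B_2),m_k)$ explicitly, reformulate ``free extension'' as the existence of a rank-three line arrangement with prescribed characteristic polynomial via Yoshinaga's freeness criterion, translate that into a sharp count of multiple points, and finally rule the count out on projective-geometric grounds.

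First I would compute $\exp(\CA(B_2),m_k)$. A homogeneous derivation $\theta=P\partial_x+Q\partial_y$ lies in $D(\CA,m_k)$ precisely when $x^2\mid P$, $y^k\mid Q$, $(x-y)\mid(P-Q)$ and $(x+y)^k\mid(P+Q)$. Writing $P=x^2P'$, $Q=y^kQ'$ and counting dimensions in degree $d$ (the free coefficients against the single condition from $x-y$ and the $k$ conditions imposed by $(x+y)^k$) shows there is no nonzero derivation in degree $k$, while in degree $k+1$ the solution space is one-dimensional; I have checked the degree $k+1$ derivation exists by solving the linear system directly. Since rank-two multiarrangements are free with $\det$ of a basis equal to $Q(\CA,m_k)$ and $|m_k|=2k+3$, this forces $\exp(\CA(B_2),m_k)=(k+1,k+2)$. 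In particular the multiarrangement is \emph{balanced}, so any obstruction must come from the geometry of the extension rather than from the exponents.

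Next I would set up the extension combinatorially. A free extension $\CE$ would be a rank-three arrangement of $2k+4$ lines in $\CC^3$ containing a line $H$ with Ziegler restriction $(\CA(B_2),m_k)$; projectivizing, $\CE$ is $H$ together with four pencils of $2,k,1,k$ lines through four points $P_x,P_y,P_{x-y},P_{x+y}$ of $H$ lying in harmonic position (the cross-ratio $-1$ forced by type $B_2$). By Ziegler's theorem a free $\CE$ must have $\exp(\CE)=(1,k+1,k+2)$, and by Yoshinaga's criterion $\CE$ is free if and only if $\chi(\CE,t)=(t-1)(t-(k+1))(t-(k+2))$. Reading off the linear coefficient $b_2=\sum_p(m_p-1)$, summed over all multiple points $p$ with $m_p$ lines through $p$, yields the numerical condition $\sum_p(m_p-1)=k^2+5k+5$. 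The four points on $H$ already contribute $2k+3$, and every pair of lines from distinct pencils meets off $H$, giving $k^2+6k+2$ such pairs; hence the condition is equivalent to requiring the off-$H$ concurrence excess $\sum_{q\notin H}\binom{m_q-1}{2}$ to equal exactly $3k$.

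The final and hardest step is to show this excess cannot equal $3k$ once $k\ge4$. Because two lines of one pencil meet only on $H$, an off-$H$ point carries at most one line from each pencil, so a crude pencil-by-pencil bound shows the excess exceeds $3k$ by at most a bounded constant; attaining exactly $3k$ therefore forces the two size-$k$ pencils (the $Y$- and $W$-pencils) to be matched almost perfectly by the three perspectivity-composites $\Phi_{l}$ sending a $Y$-line to the join of $P_{x+y}$ with $(l\cap Y\text{-line})$, induced by the single $Z$-line and the two $X$-lines. Concretely, the size-$k$ set $S$ of $Y$-lines must be carried into a fixed size-$k$ set $T$ of $W$-lines by all three projectivities, so $S$ is nearly invariant under the Möbius transformations $\Phi_{l}^{-1}\Phi_{l'}$. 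The crux is to show that the particular projectivities forced by the harmonic $B_2$ configuration cannot stabilize a set as large as $k=4$: I expect to pin down their fixed points and orders from the cross-ratio data and conclude that any commonly invariant set has at most three elements. This rigidity statement is where the threshold $k\ge4$ emerges and is the step I expect to be the main obstacle; everything preceding it is essentially bookkeeping.
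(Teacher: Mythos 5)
Your preparatory steps are correct and reproduce the paper's own bookkeeping: the exponents $(k+1,k+2)$ follow from balancedness of $m_k$ with $\vert m_k\vert=2k+3$ odd (Corollary~\ref{corollary: deltas for m}), and your numerical condition $\sum_{q\notin H}\binom{m_q-1}{2}=3k$ is exactly the specialization to $m_k$ of the equivalence $\LMP(\CE)=\VGMP(\CE,H_0)\iff\CE$ free (Lemma~\ref{Lemma: LMP>=GMP for extensions}); note also that by that lemma the excess is automatically $\le 3k$, so the question is purely one of attainability. But the step that carries the entire content of the theorem --- showing the excess cannot attain $3k$ when $k\ge 4$ --- is not proved in your proposal; you explicitly defer it (``I expect to pin down their fixed points and orders\dots''), so what you have is a correct reduction plus a conjecture. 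Two specific things in the deferred part are unjustified. First, the passage from ``excess $=3k$'' to ``the two size-$k$ pencils are matched perfectly by the three perspectivity composites'' requires a slack analysis you do not supply: crossings of the $X$- and $Z$-lines with the $Y$-pencil may remain double points, and quadruple points contribute $3$ while consuming two crossing slots, so exact matching under all three maps, hence exact invariance of $S$ under $\Phi_{l}^{-1}\Phi_{l'}$, is not forced without further input. The paper obtains this control differently: from Lemma~\ref{Lemma: LMP>=GMP for extensions} it derives the sharp bounds $k+1\le\vert\CE^{H'}\vert\le k+2$ for every $H'\ne H_0$ (Corollary~\ref{corollary: restriction on amount of hyperplanes for extension of B2}, Remark~\ref{remark: using corollary for nonfreeness}), and then splits according to whether the deletion of the unique $(x-y)$-copy is free. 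In the free case it invokes Lemma~\ref{lemma: main theorem deletion free} --- every free extension of $x^ky^k(x-y)^2$ is the Yoshinaga extension, proved via the Abe--Yamaguchi free-path theorem --- and concludes $\vert\CE^H\vert\ge k+3$, a contradiction; in the non-free case it runs, in concrete form, the orbit argument you anticipate: the induced projectivities are the doublings $c\mapsto 2c$ and $c\mapsto 2c-1$, whose only finite orbits in characteristic zero are the fixed points $0$ and $1$, and a source/sink count in two directed graphs $G(H_1),G(H_2)$ yields the contradiction. So your Möbius-rigidity intuition is sound in spirit, but it is precisely the part that must be executed, and executing it requires the freeness dichotomy and restriction-size bounds you have not established.

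Second, your proposed mechanism for the threshold (``any commonly invariant set has at most three elements, whence $k\ge4$'') is almost certainly not where $k\ge4$ actually enters, and pursuing it as stated would likely fail. The paper observes that for $k=3$ a free extension \emph{does} exist, and the threshold arises arithmetically: by the peak-point classification of deletions (Corollary~\ref{corollary: deletion exponents appendixproof}, proved in the appendix via the Feigin--Wang--Yoshinaga integral formulas), the deletion $(2,2,1,3)$ at $k=3$ is a peak point with $\Delta(m')=2$ (Example~\ref{example: k equal to 3 is peakpoint}), which relaxes the admissible bound to $\vert\CE^{H'}\vert\le k+3$ and lets the extremal configuration through, whereas for $k\ge4$ no deletion is a peak point and the bounds pinch. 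Any completion of your argument must detect this distinction between $k=3$ and $k\ge4$; a uniform cross-ratio rigidity bound of the form ``invariant sets have size $\le 3$'' does not by itself explain why the obstruction evaporates at $k=3$, so the gap is not merely technical but conceptual.
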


As noted in Remark~\ref{remark:permutation}, the values of the multiplicity $m_k$ may be permuted among certain hyperplanes of $\CA(B_2)$ without affecting the validity of the result. We prove Theorem \ref{theorem:MainResult} by employing results of Abe and Yoshinaga, along with a classification of the degrees of basis elements for certain multiplicities on $\CA(B_2)$. As a straightforward corollary of Theorem \ref{theorem:MainResult}, we further obtain infinitely many free multiplicities on the Coxeter arrangement of type $B_n$ (for $n \geq 3$) that admit no free extensions. 

\begin{theorem}[Corollary \ref{corollary: implications for higher Bn}]\label{theorem 2}
    Let $(\CA,m)=(\CA(B_n),m)$ be the Coxeter multiarrangement of type $B_n$. If there exists a $X\in L(\CA)$, with $\rank(X)=2$ and $(\CA_X,m_X)=(\CA(B_2),m_k)$ (where $k\geq 4$ and $(\CA(B_2),m_k)$ as in Theorem \ref{theorem:MainResult}), then no free extension of $(\CA,m)$ exists.
\end{theorem}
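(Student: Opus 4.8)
The plan is to argue by contradiction, exploiting the principle that the free-extension relation is inherited under localization at a flat. Suppose that a free extension $\CE$ of $(\CA,m)=(\CA(B_n),m)$ exists; that is, $\CE$ is a free simple arrangement equipped with a distinguished hyperplane $H\in\CE$ satisfying $\CE^H=\CA$ and $m^H=m$. The first step is to transport the given rank-$2$ flat $X\in L(\CA)$ into $L(\CE)$. Under the canonical lattice isomorphism $L(\CE^H)\cong\{Z\in L(\CE):Z\subseteq H\}$, the flat $X$ corresponds to a flat $Y\in L(\CE)$ with $Y\subseteq H$ and $\rank(Y)=\rank(X)+1=3$, the rank increasing by one because $H$ itself contributes one to the rank in $\CE$.

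Next I would pass to the localization $\CE_Y=\{K\in\CE:Y\subseteq K\}$, a central arrangement of rank $3$ that contains $H$ (since $Y\subseteq H$). Because localizations of free simple arrangements are again free (a classical result of Orlik--Terao), $\CE_Y$ is free. I then claim that $\CE_Y$, with the same distinguished hyperplane $H$, is a free extension of the localized multiarrangement $(\CA_X,m_X)=(\CA(B_2),m_k)$, which immediately contradicts Theorem~\ref{theorem:MainResult} once $k\geq 4$.

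To justify the claim, two compatibilities must be checked. On the level of simple arrangements, the hyperplanes of $(\CE_Y)^H$ are the $K\cap H$ with $K\in\CE_Y\setminus\{H\}$; under the identification $\CE^H=\CA$ these are exactly the hyperplanes of $\CA$ containing $X$, so $(\CE_Y)^H=\CA_X$. On the level of multiplicities, fix $\bar K\in\CA_X$ and note that $Y\subseteq\bar K$ as subspaces; hence the condition $\bar K\subseteq K$ already forces $Y\subseteq K$, which yields the identity $(\CE_Y)_{\bar K}=\CE_{\bar K}$ of localizations. Consequently the Ziegler multiplicity of $\CE_Y$ along $H$ at $\bar K$ equals $|(\CE_Y)_{\bar K}|-1=|\CE_{\bar K}|-1=m^H(\bar K)=m(\bar K)=m_X(\bar K)$. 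Thus $(\CE_Y)^H=\CA_X$ and the associated Ziegler multiplicity is $m_X$, so $\CE_Y$ is a free extension of $(\CA_X,m_X)$.

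I expect the main obstacle to be the bookkeeping in this last compatibility: carefully matching the lattice isomorphism $L(\CE^H)\cong\{Z\in L(\CE):Z\subseteq H\}$ with the definition of the Ziegler restriction, and verifying the equality $(\CE_Y)_{\bar K}=\CE_{\bar K}$ that makes the multiplicities agree. No deep new idea is required beyond the freeness of localizations and Ziegler's compatibility; the difficulty is purely in handling the definitions precisely so that the distinguished-hyperplane data is transported correctly. Combining the three displayed identities with Theorem~\ref{theorem:MainResult} produces the desired contradiction and establishes the corollary.
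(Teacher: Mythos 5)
Your proposal is correct and takes essentially the same route as the paper's proof: localize the hypothetical free extension $\CE$ at the flat $X$ (your $Y$), observe that $\CE_X$ is then an extension of $(\CA_X,m_X)=(\CA(B_2),m_k)$, and derive a contradiction from Theorem \ref{theorem:MainResult} together with the fact that localizations of free arrangements are free (the paper cites the multiarrangement version \cite[Prop.~1.7]{abenuidanumata:signedeliminable}, Theorem \ref{theorem: Multilocalizations are free}, where you invoke the classical Orlik--Terao statement). The only difference is that you spell out the compatibility of localization with the Ziegler restriction, i.e.\ $(\CE_Y)^H=\CA_X$ with matching multiplicities via $(\CE_Y)_{\bar K}=\CE_{\bar K}$, which the paper asserts without proof.
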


In our final result, we present an application of Theorem \ref{theorem 2}, which yields an infinite family of inductively free multiplicities $n_k$ on the Coxeter arrangement $\CA(B_3)$ of type $B_3$, such that there does not exist a free extension of any of the pairs $(\CA(B_3),n_k)$.

\begin{theorem}[Corollary \ref{corollary: B3 non extendable example}]
    Let $$Q(\CA(B_3),n_k)=x^2 y^k (x-y)^1 (x+y)^k z^e (x-z)^f (x+z)^g (y-z)^h (y+z)^i, (k\geq 4),$$ be the Coxeter multiarrangement of type $B_3$. Fix the supersolvable filtration $$\{\ker(x)\}\subset \{\ker(x), \ker(y), \ker(x-y), \ker(x+y)\}\subset \CA(B_3).$$ Then $(\CA(B_3),n_k)$ satisfies the conditions of Theorem \ref{theorem: free vertex} if and only if $e=f=g=h=i=1$. Moreover, each $(\CA(B_3),n_k)$ is inductively free with exponents $\exp(\CA(B_3),n_k)=(5,k+1,k+2)$. However, no free extension of $(\CA(B_3),n_k)$ exists.
\end{theorem}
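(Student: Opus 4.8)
The plan is to split the statement into an easy negative part and a computational positive part, and to emphasize that the two coexist precisely because every rank-two multiarrangement is free. Concretely, I would first dispatch the non-extendability using Theorem~\ref{theorem 2}, then verify the free-vertex conditions together with inductive freeness by running Theorem~\ref{theorem: free vertex} along the given filtration, and finally reconcile the exponent bookkeeping.

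For the non-extendability I would localize $(\CA(B_3),n_k)$ at the rank-two flat $X=\ker(x)\cap\ker(y)$, the $z$-axis. A root $\alpha$ of $\CA(B_3)$ satisfies $\ker(\alpha)\supseteq X$ iff $\alpha$ involves no $z$, so the hyperplanes through $X$ are exactly $\ker(x),\ker(y),\ker(x-y),\ker(x+y)$ and $\CA_X$ is the $B_2$-subsystem in $x,y$. Reading off the exponents of $Q(\CA(B_3),n_k)$ gives $m_X=(2,k,1,k)$, so $(\CA_X,m_X)=(\CA(B_2),m_k)$ in the notation of Theorem~\ref{theorem:MainResult}. As $k\ge 4$, the hypotheses of Theorem~\ref{theorem 2} hold at $X$, and no free extension of $(\CA(B_3),n_k)$ exists. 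I would stress that $(\CA(B_2),m_k)$ is nevertheless free---being of rank two---so the obstruction to extendability is entirely compatible with freeness of the ambient rank-three arrangement.

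For the free-vertex conditions and inductive freeness I would apply Theorem~\ref{theorem: free vertex} to the filtration $\{\ker x\}\subset\{\ker x,\ker y,\ker(x-y),\ker(x+y)\}\subset\CA(B_3)$. The bottom two terms yield the free rank-two multiarrangement $(\CA(B_2),m_k)$; since its total multiplicity is $2k+3$ and no single multiplicity exceeds half of this (the maximum being $k$), the rank-two classification gives the balanced split $\exp=(k+1,k+2)$. The top step adjoins $\ker(z),\ker(x\pm z),\ker(y\pm z)$ with multiplicities $e,f,g,h,i$, and I would test the hypotheses of Theorem~\ref{theorem: free vertex} at each rank-two flat meeting these new hyperplanes: the two further $B_2$-vertices on the $x$- and $y$-axes, with multiplicities $(k,e,h,i)$ and $(2,e,f,g)$, and the four $A_2$-vertices where triples concur, such as $\ker(x-y),\ker(x-z),\ker(y-z)$ with multiplicities $(1,f,h)$. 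Requiring each local multiarrangement to be free with the exponents forced by the addition pattern should produce numerical constraints that are simultaneously satisfiable only when $e=f=g=h=i=1$; in that case the contributed multiplicity is $e+f+g+h+i=5$, the carried exponents $(k+1,k+2)$ survive, and $\exp(\CA(B_3),n_k)=(5,k+1,k+2)$. Because the filtration realizes this as an explicit addition chain, I would conclude inductive freeness, not merely freeness.

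The main obstacle I anticipate is the flat-by-flat verification in the top step: I must compute the exponents of the mixed-multiplicity rank-two localizations at the four $A_2$-vertices and the two auxiliary $B_2$-vertices, and then show that the free-vertex equations force $e=f=g=h=i=1$ (the converse, that the all-ones choice satisfies them, being a direct check). Reconciling these six local exponent splits with the global split $(5,k+1,k+2)$ is where the genuine work lies, whereas the non-extendability reduces, once the flat $X$ is identified, to a single invocation of Theorem~\ref{theorem 2}.
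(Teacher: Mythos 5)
Your overall architecture coincides with the paper's: you obtain the non-extendability exactly as the paper does, by localizing at the rank-two flat $X=\ker(x)\cap\ker(y)$, reading off $(\CA_X,(n_k)_X)=(\CA(B_2),(2,k,1,k))$, and invoking Theorem \ref{theorem 2} (Corollary \ref{corollary: implications for higher Bn}); and your exponent bookkeeping is correct, since $|m_2|=2k+3$ is odd, so Corollary \ref{corollary: deltas for m} gives $\exp(\CA(B_2),(2,k,1,k))=(k+1,k+2)$ and the third exponent is $e+f+g+h+i=5$. You have also identified precisely the right rank-two flats for the top filtration step, with the correct local multiplicities.

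However, there is a genuine gap at the decisive step, and the mechanism you propose for closing it would fail as stated. The hypothesis of Theorem \ref{theorem: free vertex} at a flat $X=H'\cap H''$ (with $H'$ a new hyperplane and $H''\in\CA_2$) is \emph{not} that the localization $(\CA_X,m_X)$ be free with exponents compatible with the global split---every rank-two multiarrangement is free, as you note yourself, so ``requiring each local multiarrangement to be free'' imposes no constraint at all, and the theorem makes no reference to local exponents. The condition is the explicit numerical inequality
$$m(H'')\geq\Bigl(\textstyle\sum_{X\subset H\in\CA(B_3)\setminus\CA_2}m(H)\Bigr)-1$$
whenever $\CA_X\neq\{H',H''\}$. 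Since you already listed the six relevant flats with their multiplicities, the verification is immediate and needs no exponent computation: the inequalities read $2\geq e+f+g-1$, $k\geq e+h+i-1$, $1\geq f+h-1$, $1\geq g+i-1$, $k\geq f+i-1$, and $k\geq g+h-1$. With $e,f,g,h,i\geq 1$, the third and fourth force $f=h=1$ and $g=i=1$, and then the first forces $e=1$; conversely the all-ones multiplicity satisfies all six (the paper records only the three binding inequalities, those with left-hand sides $2$ and $1$). Replacing your anticipated ``flat-by-flat exponent matching'' by this direct check closes the gap and recovers exactly the paper's proof; as you planned it, the ``only if'' direction would never materialize, because local freeness is automatic and cannot single out $e=f=g=h=i=1$.
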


\section{Preliminaries}
In this section we recall some definitions and elementary results. For this, we closely follow \cite{MR1217488}. 
\begin{definition}
Let $\KK$ be a field, and let $V$ be a vector space of dimension $\ell$ over $\KK$. A \emph{hyperplane} $H$ in $V$ is an affine subspace of dimension $\ell-1$. A \emph{hyperplane arrangement} (or simply an \emph{arrangement}) is a pair $\CA = (\CA, V)$, where $\CA$ is a finite set of hyperplanes in $V$.\\
We refer to $\CA$ as an $\ell$-arrangement to indicate that $\dim V=\ell$.
\end{definition}

\begin{definition}
1. We call a hyperplane arrangement $\CA$ \emph{central} if $\cap_{H \in \CA} H \neq \emptyset$. Therefore if $\CA$ is central, we will assume that each hyperplane contains the origin and is thus a subspace.\\
2. Let $V^*$ be the dual space of $V$ and $S = S(V^*)$ be the symmetric algebra of $V^*$. If $x_1, \ldots , x_\ell$ is a basis of $V^*$, then we identify $S$ with the polynomial algebra $S = \mathbb{K}[x_{1}, \ldots, x_\ell]$.
\end{definition}

All arrangements investigated in this paper are central. 

\begin{definition}
    Let $\CA$ be a hyperplane arrangement. Each hyperplane $H \in \CA$ is the kernel of a polynomial $\alpha_H$ of degree 1, defined up to a constant. A function $m:\CA\to \mathbb{Z}_{\geq0}$ is called a \emph{multiplicity} on $\CA$ and the pair $(\CA,m)$ is called a \emph{multiarrangement}. We define the \emph{cardinality} $\vert m\vert$ of $(\CA,m)$ as $\vert m\vert:=\sum_{H\in\CA}m(H)$. The product \[Q(\CA,m) = \prod_{H \in \CA} \alpha_H^{m(H)}\] is called the \emph{defining polynomial of $(\CA,m)$}. If $m\equiv 1$, we call $(\CA,m)$ a \emph{simple arrangement} and denote it by just $\CA$. If $\CA$ is central, then $\alpha_H$ is a linear form for all $H$ and $Q(\CA)$ is a homogeneous polynomial.
\end{definition}

We now introduce the \emph{lattice of intersections} $L(\CA)$ of an arrangement $\CA$, which will play a fundamental role in the proof of our main result.

\begin{definition}
	Let $L(\CA)$ be the set of all possible intersections of hyperplanes of $\CA$: \[L(\CA) = \big\{ \textstyle\bigcap_{H\in \CA'}H~\vert ~\CA' \subseteq \CA\big\}.\]
	We define $\rank(X)=\codim(X), \rank(\CA)=\rank(\cap_{H\in \CA}H)$, and \[L_k(\CA) := \{X \in L(\CA)~\vert~\rank(X) = k\}. \] By defining an ordering on $L(\CA)$ by reverse inclusion ($X \leq  Y \Leftrightarrow Y \subseteq X$) the poset $L(\CA)$ becomes a geometric lattice \cite[Lem.~2.3]{MR1217488} which is isomorphic to the lattice of flats of its underlying matroid. 
\end{definition}

We can associate a certain polynomial $\chi(\CA;t)$ to $\CA$, that only depends on its lattice of intersections $L(\CA)$.

\begin{definition}
    Let $\CA$ be an arrangement with intersection poset $L(\CA)$ and Möbius function $\mu$ \cite[Def.~2.42]{MR1217488}. Define the \emph{characteristic polynomial of $\CA$} as $$\chi(\CA;t)=\sum_{X\in L(\CA)}\mu(X)t^{\dim X}.$$ 
\end{definition}

The following operations on $L(\CA)$ are essential.

\begin{definition}
    Let $\CA$ be a hyperplane arrangement. For $X\in L(\CA)$ define the
    \begin{enumerate}
        \item \emph{localization} $\CA_X$ of $\CA$ at $X$ by \[\CA_X = \{H \in \CA~\vert~X \subseteq H\},\text{ and}\]
        \item \emph{restriction} $\CA^X$ of $\CA$ to $X$ by \[\CA^X = \{X \cap H~\vert~H \in \CA\backslash\CA_X~\text{and}~X\cap H \not= \emptyset\}\]  where $\CA^X=(\CA^X, X)$ is an arrangement in $X$.
    \end{enumerate}
\end{definition}
 
 The module of logarithmic vector fields for a simple arrangement $\CA$ was originally introduced by Saito in \cite{MR0586450}. The following definition is its generalization to multiarrangements introduced by Ziegler in \cite{MR1000610}.

\begin{definition}
\noindent 1. A $\mathbb{K}$-linear map $\theta: S \rightarrow S$ is a \emph{derivation} if for $f,g \in S$: \[\theta(f\cdot g) = f\cdot \theta(g)+g\cdot \theta(f)\] Let $\emph{Der}_{\mathbb{K}}(S)$ be the $S$-module of derivations of $S$ graded by polynomial degree \cite[Def.~4.1]{MR1217488}. \\ 2. Define the $S$-submodule $D(\CA,m)$ of $\text{Der}_{\mathbb{K}}(S)$, called the \emph{module of logarithmic vector fields along $(\CA,m)$} (or the module of $(\CA,m)$-derivations), by \[D(\CA,m) = \{\theta \in \text{Der}_{\mathbb{K}}(S)~\vert ~ \theta(\alpha_H)\in \alpha_H^{m(H)}\cdot S~\text{for all}~H\in \CA\}.\]
	The multiarrangement $(\CA,m)$ is called \emph{free} if $D(\CA,m)$ is a free $S$-module.\\
3. For $X\in L(\CA)$ define the \emph{localization} $(\CA_X,m_X)$ of $(\CA,m)$ at $X$, where $m_X=m\vert_{\CA_X}$.
\end{definition}

One of the most interesting questions about the freeness of an arrangement $\CA$ is to what extent the freeness of $\CA$ and the combinatorial structure of $\CA$ are related.
 
 \begin{remark}
 1. If $D(\CA,m)$ is a free $S$-module, then there always exists a homogeneous basis $\{\theta_i\mid 1\leq i\leq \ell\}$ of $D(\CA,m)$ and we call the multiset $\exp(\CA,m)=\{\deg(\theta_i)\mid 1\leq i\leq \ell\}$ the exponents of $(\CA,m)$. It is shown by Ziegler in \cite[Cor.~7]{MR1000610} that if $\rank(\CA)=2$, then $(\CA,m)$ is always free.\\
 2. If $\CA$ is a simple free $\ell$-arrangement with $\CA\neq\emptyset$, then $1\in\exp(\CA)$ \cite[Prop.~4.27]{MR1217488} and if $\exp(\CA)=(1,d_2,\dots,d_\ell)$, then $\chi(\CA;t)=(t-1)(t-d_2)\cdots(t-d_\ell)$, as shown by Terao in \cite{MR0608532}. So $\exp(\CA)$ is determined by $L(\CA)$. If $m\not\equiv 1$, then $\exp(\CA,m)$ is not determined by $L(\CA)$  \cite[Prop.~10]{MR1000610}.
 \end{remark}

Starting at a simple arrangement $\CA$ and fixing one hyperplane $H\in\CA$, Ziegler defined the following multiplicity $m^H$ on $\CA^H$, which only depends on the intersection lattice $L(\CA)$.

\begin{definition}
Let $\CA$ be an arrangement and $H\in\CA$. Define a multiplicity $m^H$ on $\CA^H$ by $$m^H(X)=\vert\CA_X\vert-1,\quad X\in\CA^H$$ and call the pair $(\CA^H,m^H)$ the \emph{Ziegler restriction} of $\CA$ to $H$.
\end{definition} 

Ziegler showed the following connection between the freeness of $\CA$ and the freeness of any of its Ziegler restrictions.

\begin{theorem}[{\cite[Thm.~11]{MR1000610}}]\label{theorem: ziegler restriction}
	Suppose $\CA$ is free with exponents $(1,d_2,\dots,d_\ell)$. Let $H$ be a hyperplane in $\CA$, then $(\CA^H,m^H)$ is free with exponents $(d_2,\dots,d_\ell)$. 
\end{theorem}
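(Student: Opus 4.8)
The plan is to build a basis of $D(\CA^H,m^H)$ by restricting a basis of $D(\CA)$ to $H$ and then verifying Saito's criterion in its multiarrangement form. After a linear change of coordinates I may assume $\alpha_H=x_\ell$, so $H=\ker(x_\ell)$ and the coordinate ring of $H$ is $S_H:=S/(x_\ell)\cong\KK[x_1,\dots,x_{\ell-1}]$; write $\bar f$ for the image of $f\in S$ in $S_H$, and let $\doteq$ denote equality up to a nonzero scalar. Since $\CA$ is a nonempty central free arrangement, the Euler derivation $\theta_E=\sum_{j=1}^{\ell}x_j\partial_{x_j}$ may be taken as one element $\theta_1$ of a homogeneous basis $\theta_1,\dots,\theta_\ell$ of $D(\CA)$ of degrees $1,d_2,\dots,d_\ell$. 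Because $H\in\CA$ forces $\theta_i(x_\ell)\in(x_\ell)$ for every $i$, I would first normalize by the basis-preserving replacement $\theta_i\mapsto\theta_i-\frac{\theta_i(x_\ell)}{x_\ell}\theta_E$ for $i\geq2$, so that afterwards $\theta_i(x_\ell)=0$ for all $i\geq2$ while $\theta_1=\theta_E$. Writing $\theta_i=\sum_j f_{ij}\partial_{x_j}$, each $\theta_i$ with $i\geq2$ then has vanishing $\partial_{x_\ell}$-component, preserves $(x_\ell)$, and descends to $\bar\theta_i=\sum_{j=1}^{\ell-1}\bar f_{ij}\partial_{x_j}\in\Der_{\KK}(S_H)$.

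The crux, and the step I expect to be the main obstacle, is to show that each $\bar\theta_i$ ($i\geq2$) lies in $D(\CA^H,m^H)$, i.e. carries exactly the Ziegler multiplicity. Fix $\bar X\in\CA^H$ and a linear form $\beta$ with $\bar X=\{x_\ell=\beta=0\}$; then $\alpha_{\bar X}=\bar\beta$. The hyperplanes of $\CA$ through $\bar X$ form a pencil, so every $K\in\CA_{\bar X}$ has $\alpha_K\in\mathrm{span}(\beta,x_\ell)$, and the $m^H(\bar X)=|\CA_{\bar X}|-1$ hyperplanes $K\neq H$ in this pencil have defining forms $\alpha_K\doteq\beta-c_K x_\ell$ with pairwise distinct scalars $c_K$. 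The normalization $\theta_i(x_\ell)=0$ now gives $\theta_i(\alpha_K)=\theta_i(\beta)$ for each such $K$, so the conditions $\theta_i(\alpha_K)\in(\alpha_K)$ say that $\theta_i(\beta)$ is divisible by each of the $m^H(\bar X)$ pairwise non-associate linear forms $\beta-c_K x_\ell$; as $S$ is a UFD, $\theta_i(\beta)$ is divisible by their product. Reducing modulo $x_\ell$ sends that product to $\doteq\beta^{\,m^H(\bar X)}$, whence $\bar\theta_i(\alpha_{\bar X})=\overline{\theta_i(\beta)}\in(\alpha_{\bar X}^{\,m^H(\bar X)})$, as required. (It is precisely the normalization that turns this into a clean divisibility statement; without it one is left with the more delicate rank-two Taylor-coefficient argument.)

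It then remains to verify the determinantal half of Saito's criterion, namely that the coefficient matrix of $\bar\theta_2,\dots,\bar\theta_\ell$ has determinant $\doteq Q(\CA^H,m^H)$. With the normalization the last column of $F=(f_{ij})$ is $(x_\ell,0,\dots,0)^{\mathsf T}$, so expanding along it gives $\det F\doteq x_\ell\det(f_{ij})_{2\le i\le\ell,\,1\le j\le\ell-1}$. Saito's criterion applied to the free arrangement $\CA$ gives $\det F\doteq Q(\CA)=x_\ell\prod_{K\neq H}\alpha_K$, so $\det(f_{ij})_{i\geq2,\,j<\ell}\doteq\prod_{K\neq H}\alpha_K$, which is not divisible by $x_\ell$; reducing modulo $x_\ell$ therefore yields a nonzero determinant $\det(\bar\theta_i(x_j))_{2\le i\le\ell,\,1\le j\le\ell-1}\doteq\prod_{K\neq H}\bar\alpha_K$. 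Grouping the factors $\bar\alpha_K$ by the flat $\bar X=K\cap H$ they define turns the right-hand side into $\prod_{\bar X\in\CA^H}\alpha_{\bar X}^{\,m^H(\bar X)}=Q(\CA^H,m^H)$. By Saito's criterion, $\bar\theta_2,\dots,\bar\theta_\ell$ is thus a homogeneous basis of $D(\CA^H,m^H)$, and since restriction preserves degrees its exponents are $(d_2,\dots,d_\ell)$. The degree bookkeeping is automatically consistent, since $\sum_{i\geq2}d_i=|\CA|-1=\sum_{\bar X}(|\CA_{\bar X}|-1)=|m^H|$.
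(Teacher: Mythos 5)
The paper offers no proof of this statement---it is quoted directly from Ziegler \cite{MR1000610}---and your argument is correct and essentially reproduces Ziegler's original one: normalize a homogeneous basis containing the Euler derivation so that $\theta_i(x_\ell)=0$ for $i\geq 2$, verify the Ziegler multiplicities via the pencil/divisibility argument at each $\bar X\in\CA^H$, and conclude with the multiarrangement form of Saito's criterion. You also handle correctly the two points that need care: the normalization is a unitriangular, homogeneity-preserving change of basis (legitimate because $\theta_i(x_\ell)\in(x_\ell)$ for every $\theta_i\in D(\CA)$), and $x_\ell\nmid\prod_{K\neq H}\alpha_K$, so the minor of the coefficient matrix survives reduction modulo $x_\ell$ and yields $Q(\CA^H,m^H)$ up to a nonzero scalar.
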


Yoshinaga started in \cite{MR3025868} to investigate the question of \emph{extendability}, that is, which free multiarrangements $(\CA,m)$ can be obtained from a free simple arrangement $\CE$ by using Theorem \ref{theorem: ziegler restriction}.

\begin{definition}
Let $(\CA,m)$ be a free $\ell$-multiarrangement. We call a simple $(\ell+1)$-arrangement $\CE$ an \emph{extension of $(\CA,m)$} if there exists a hyperplane $H\in \CE$ such that $(\CE^H,m^H)=(\CA,m)$. If $\CE$ is a free arrangement, then we call it a \emph{free extension of $(\CA,m)$}.
\end{definition} 

In \cite[Ex.~1.4]{MR3025868}, Yoshinaga provides an example of a free rank two multiarrangement $(\CA,m)$ that does not admit a free extension. Note that in Yoshinaga's example, we have $\vert m\vert= 9$. Consequently, Theorem \ref{theorem:MainResult} constitutes the first result establishing the existence of infinitely many free multiplicities on a fixed free arrangement $\CA$ that do not admit any free extensions.\\
Furthermore, Yoshinaga showed in \cite{MR2105827} that if $(\CA^H, m^H)$ is a $2$-arrangement and the multiset of exponents $\exp(\CA^H, m^H)$ is known, then the freeness of $\CA$ - and hence the question of whether $\CA$ is a free extension of $(\CA^H, m^H)$ - depends solely on $L(\CA)$.
  
\begin{theorem}[{\cite[Cor.~3.3]{MR2105827}}]\label{theorem: Yoshinaga ChaPol}
    Let $\CA$ be a $3$-arrangement. Set $$\chi(\CA;t)=(t-1)(t^2-b_1t+b_2)=t^3-(b_1+1) t^2 + (b_1+b_2) t  - b_2$$ and $\exp(\CA^{H},m^H)=(d_1,d_2)$. Then
    \begin{enumerate}
        \item $b_2\geq d_1 d_2$.
        \item If $b_2=d_1d_2$, then $\CA$ is free with exponents $(1,d_1,d_2)$.
    \end{enumerate}
\end{theorem}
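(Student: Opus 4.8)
The plan is to translate the statement into the geometry of a rank-two vector bundle on $\mathbb{P}^2$ and to read off both assertions from its Chern classes together with its splitting behaviour along one distinguished line. First I would fix $\alpha_H=z$ and pass to the quotient $D_0(\CA)=D(\CA)/S\theta_E$ of $D(\CA)$ by the Euler derivation $\theta_E$. Because $\CA$ is central of rank three, $D_0(\CA)$ is a reflexive graded $S$-module of rank two, so its sheafification $\mathcal{F}=\widetilde{D_0(\CA)}$ on $\mathbb{P}^2=\mathbb{P}(V)$ is locally free of rank two (a reflexive sheaf on the smooth surface $\mathbb{P}^2$ is a bundle). The two pieces of the dictionary I would set up are: (i) $\CA$ is free, with $\exp(\CA)=(1,e_2,e_3)$, if and only if $\mathcal{F}$ splits as $\mathcal{O}(-e_2)\oplus\mathcal{O}(-e_3)$ --- reflexivity of $D_0(\CA)$ ensures that a splitting of the sheaf lifts to a splitting of the module; and (ii) in this normalization the Chern classes are $c_1(\mathcal{F})=-b_1$ and $c_2(\mathcal{F})=b_2$. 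The class $c_1$ is pinned down by the unconditional identity $d_1+d_2=|m^H|=|\CA|-1=b_1$, which holds because every hyperplane other than $H$ meets $H$ in exactly one point of $\CA^H$, so that $\sum_{X\in\CA^H}(|\CA_X|-1)=|\CA|-1$.

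The heart of the argument --- and the step I expect to be the main obstacle --- is the \emph{splitting-type lemma}: the restriction of $\mathcal{F}$ to the line $\ell_H\subset\mathbb{P}^2$ dual to $H$ is $\mathcal{F}|_{\ell_H}\cong\mathcal{O}_{\ell_H}(-d_1)\oplus\mathcal{O}_{\ell_H}(-d_2)$, where $(d_1,d_2)=\exp(\CA^H,m^H)$ are the exponents of the (automatically free, by Ziegler's rank-two result) multiarrangement on $H$. Establishing this requires analysing $D(\CA)$ in a neighbourhood of $H$ and matching the prescribed vanishing orders $m^H(X)=|\CA_X|-1$ against the order of tangency of logarithmic vector fields along $\ell_H$; this is exactly the point at which the combinatorial definition of $m^H$ must be reconciled with the geometry of $\mathcal{F}$, and it carries the real content of the theorem.

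Granting the dictionary, part (1) reduces to a standard inequality for rank-two bundles on $\mathbb{P}^2$. Let $(a_1,a_2)$ with $a_1\ge a_2\ge 0$ record the generic splitting type, so that $\mathcal{F}|_{\ell}\cong\mathcal{O}(-a_1)\oplus\mathcal{O}(-a_2)$ for a general line $\ell$ and $a_1+a_2=d_1+d_2=b_1$. Since the splitting type can only become more unbalanced on special lines, the distinguished line $\ell_H$ satisfies $a_1\le d_1$, whence $d_1d_2\le a_1a_2$ at fixed sum. Normalizing the generic splitting type by the twist $\mathcal{F}(a_2)$ (which has generic restriction $\mathcal{O}(-(a_1-a_2))\oplus\mathcal{O}$) produces a nonzero global section whose zero locus has finite length $c_2(\mathcal{F})-a_1a_2\ge 0$. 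Hence $b_2=c_2(\mathcal{F})\ge a_1a_2\ge d_1d_2$, which is part (1).

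For part (2), suppose $b_2=d_1d_2$. Running the two inequalities of the previous paragraph backwards forces both $a_1a_2=d_1d_2$ and $c_2(\mathcal{F})=a_1a_2$. The first equality (at fixed sum) gives $(a_1,a_2)=(d_1,d_2)$, so $\ell_H$ is not a jumping line; the second says the section of $\mathcal{F}(a_2)$ constructed above has empty zero locus, i.e.\ is nowhere vanishing. A nowhere-vanishing section exhibits $\mathcal{O}\hookrightarrow\mathcal{F}(a_2)$ as a sub-line-bundle with locally free quotient, so $\mathcal{F}(a_2)$ is an extension of two line bundles; since $H^1$ of every line bundle on $\mathbb{P}^2$ vanishes, the extension splits and $\mathcal{F}\cong\mathcal{O}(-d_1)\oplus\mathcal{O}(-d_2)$. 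By part (i) of the dictionary $\CA$ is then free with $\exp(\CA)=(1,d_1,d_2)$, completing the proof.
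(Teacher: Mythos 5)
Your proposal cannot be compared against a proof in the paper, because the paper contains none: Theorem~\ref{theorem: Yoshinaga ChaPol} is imported from Yoshinaga \cite{MR2105827} (in the form given in \cite[Thm.~1.39]{MR3205600}). Measured against Yoshinaga's original argument, your reconstruction follows essentially the same route: pass to $D_0(\CA)=D(\CA)/S\theta_E$, sheafify to a rank-two bundle $\mathcal{F}$ on $\mathbb{P}^2$, use $c_1(\mathcal{F})=-b_1$ and $c_2(\mathcal{F})=b_2$ (the Musta\c{t}\u{a}--Schenck computation, which you assert as part of the ``dictionary'' without proof --- a citable fact, but it is a genuine input, not bookkeeping), and control the splitting type on $\ell_H$. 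Your ``splitting-type lemma'' is exactly Yoshinaga's key step and is true, but you explicitly leave it unproved while conceding it ``carries the real content,'' so your write-up has a hole at its self-declared center. For the record it is less painful than your sketch suggests: the Ziegler restriction map $D(\CA)\to D(\CA^H,m^H)$ sheafifies to a map $\mathcal{F}|_{\ell_H}\to\mathcal{O}(-d_1)\oplus\mathcal{O}(-d_2)$ which is surjective at every point of $\ell_H$, because every localization $\CA_X$ with $X\subset H$ a line through the origin has rank at most two and is hence free, and local freeness gives local surjectivity of the Ziegler restriction; a surjective map of rank-two bundles on $\mathbb{P}^1$ of equal degree (both sides have degree $-(d_1+d_2)=-(|\CA|-1)$) is an isomorphism.

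The second gap is a step that actually fails as stated: from the generic splitting $\mathcal{F}(a_2)|_{\ell}\cong\mathcal{O}(-(a_1-a_2))\oplus\mathcal{O}$ you infer a nonzero global section of $\mathcal{F}(a_2)$. When $a_1>a_2$ this is legitimate via the standard descent construction (the trivial summand on the generic line is unique, since $H^0(\mathcal{O}(-(a_1-a_2)))=0$, so the summands glue to a saturated subsheaf $\mathcal{O}(-a_2)\hookrightarrow\mathcal{F}$). But when $a_1=a_2$ it is false in general: a stable rank-two bundle with $c_1=0$ has generic splitting $\mathcal{O}\oplus\mathcal{O}$ by Grauert--M\"ulich yet $H^0=0$, so balanced generic splitting produces no section, and your proofs of both (1) and (2) break precisely in that case. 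The repair is standard: balanced generic splitting forces $\mu$-semistability (a destabilizing $\mathcal{O}(-t)\hookrightarrow\mathcal{F}$ would restrict to the generic line with $t\ge a_2=a_1$, contradicting $t<a_1$), so Bogomolov's inequality $c_1^2\le 4c_2$ yields $b_2\ge a_1^2=a_1a_2$, which gives (1); and if moreover $b_2=d_1d_2$, then $d_1=d_2=a_1$ and $c_1^2=4c_2$, so $\mathcal{F}$ cannot be stable, hence admits a saturated subsheaf $\mathcal{O}(-a_1)\hookrightarrow\mathcal{F}$ with quotient $\mathcal{O}(-a_1)\otimes I_Z$ and $\vert Z\vert=b_2-a_1^2=0$; the resulting extension splits since $\operatorname{Ext}^1(\mathcal{O}(-a_1),\mathcal{O}(-a_1))=H^1(\mathcal{O}_{\mathbb{P}^2})=0$, and your dictionary (i) finishes (2). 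With the splitting-type lemma proved as above and this case distinction inserted, your proposal is a correct rendition of Yoshinaga's proof.
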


It should be noted that our version of Theorem \ref{theorem: Yoshinaga ChaPol} differs slightly from the original statement in \cite{MR2105827} and can be found in (\cite[Thm.~1.39]{MR3205600}). Theorem \ref{theorem: Yoshinaga ChaPol} plays a fundamental role in the construction of free extensions of multiarrangements. 

\begin{remark}\label{Remark: Altering the coefficients of chi(A,t)}  
With notation as in Theorem \ref{theorem: Yoshinaga ChaPol} let $\CA$ be a central rank three arrangement and $\chi(\CA;t)=(t-1)(t^2-b_1t+b_2)=t^3-(b_1+1) t^2 + (b_1+b_2) t  - b_2$. Let $X\in L_2(\CA)$, then $\mu(X)=\vert\CA_X\vert-1$. By definition of $\chi(\CA;t)$, we have  $$b_1+b_2=\sum_{X\in L_2(\CA)}\mu(X)=\sum_{X\in L_2(\CA)}(\vert\CA_X\vert-1).$$ 
By definition of $\chi(\CA;t)$, the leading coefficient of $\chi(\CA;t)$ is always equal to $1$, the coefficient of $t^2$ equals $-\vert\CA\vert$ and the constant term is given by the negative sum of the remaining coefficients of $\chi(\CA;t)$. Consequently, the only coefficient that can get altered is the one in front of $t$, which can be achieved by moving hyperplanes and thereby changing $L(\CA)$ in the process. If $\CA$ is free, then by Theorem \ref{theorem: Yoshinaga ChaPol}, the constant term of $\chi(\CA;t)$ is maximized. In order to achieve this maximum, the coefficient of $t$ must be minimized. By definition of $\chi(\CA;t)$ this coefficient is given by $\sum_{(X\in L_2(\CA))}(\vert\CA_X\vert-1)$.\\

\noindent Define $\mathcal{P}_i:=\vert\{X\in L_2(\CA)\mid\vert\CA_X\vert=i\}\vert$, then we have $$\mathcal{P}_2=\binom{\vert\CA\vert}{2}-\sum_{i>2}\binom{i}{2}\cdot\mathcal{P}_i.$$
Also $$\sum_{X\in L_2(\CA)}(\vert\CA_X\vert-1)=\sum_{i\geq 2} (i-1)\cdot \mathcal{P}_i.$$
Suppose we move a hyperplane in such a way that $\mathcal{P}_3$ increases by one, then $\mathcal{P}_2$ decreases by $\binom{3}{2}=3$. Since in this process, three distinct rank two flats, each containing two elements, are merged into a single rank two flat containing three elements. Therefore, instead of contributing 1 three times (once for each flat with two hyperplanes), the new configuration contributes 2 once (for the flat with three hyperplanes), shrinking the coefficient of $t$ by $1$. Consequently, if $(\CA(A_2),m)$ is the Coxeter multiarrangement of type $A_2$, then constructing a free extension $\CA$ of $(\CA(A_2),m)$ is equivalent to maximizing $\mathcal{P}_3$ for $\CA$.
\end{remark}

Given an arbitrary multiarrangement $(\CA,m)$, Yoshinaga also defined a canonical extension $\CE(\CA,m)$ of $(\CA,m)$; however, this extension is not free in general.

\begin{definition}
Let $(\CA,m)$ be a $\ell$-arrangement. Define the \emph{Yoshinaga-Extension} in $\KK^{\ell+1}$
		with $(x_{1},x_{2},\dots,x_\ell,z)$ as a coordinate system by
		$$\CE(\CA,m)=\{\ker(z)\}\cup\left\{\ker(\alpha_H-kz)\ \middle| \  k\in\mathbb{Z},-\frac{m(H)-1}{2}\leq k\leq \frac{m(H)}{2}\right\}.$$
		Note that for $H=\ker(z)$ we have $(\CE(\CA,m)^H,m^H)=(\CA,m)$.
\end{definition} 

\section{Exponents for the Coxeter multiarrangement of type $B_2$}
For a general multiarrangement $(\CA, m)$, determining wheter it is free and, if so, identifying its exponents is, in general, a difficult problem. Even in the case where $\CA$ is a Coxeter arrangement of rank two, a complete classification of the exponents $\exp(\CA, m)$ for arbitrary multiplicities $m$ is currently available exclusively for the Coxeter arrangement of type $A_2$, as established by Wakamiko in \cite{MR2328057}. In this section, we present the latest results concerning the classification of $\exp(\CA(B_2), m)$, where $\CA(B_2)$ denotes the Coxeter arrangement of type $B_2$ equipped with an arbitrary multiplicity $m$. Understanding the exponents of $(\CA(B_2),m)$ is essential for applying Theorem \ref{theorem: Yoshinaga ChaPol}.

\begin{definition}
Fixing a multiplicity $m=(m_1,m_2,m_3,m_4)$, the \emph{Coxeter multiarrangement $\CA(B_2)$ of type $B_2$} is defined by 
$$Q(\CA(B_2),m)=x^{m_1}y^{m_2}(x-y)^{m_3}(x+y)^{m_4}.$$ 
We will denote the underlying simple arrangement by $\CA(B_2)$ for the rest of this article.
\end{definition} 

\begin{definition}
The multiplicity $m=(m_1,m_2,\ldots,m_n)$ on an arrangement $\CA$ of $n$ hyperplanes is called \textit{balanced} if $m$ satisfies $m_i<\frac{\vert m\vert}{2}$ for all $1\leq i\leq n$, where $\vert m\vert:=\sum_{i=1}^{n}m_i$. Otherwise call $m$ \textit{unbalanced}.
\end{definition} 

\begin{remark}\label{Remark: Unbalanced m exponents}
Wakefield and Yuzvinsky \cite{MR2309190} determined the exponents of multiarrangements $(\CA,m)$ with unbalanced multiplicities in the case $\rank(\CA)=2$. They showed that if $m_{\max}=\max\{m(H)\mid H\in \CA\}$ satisfies $m_{\max}\geq\frac{\vert m\vert}{2}$, then the exponents are given by $\exp(\CA,m)=(\vert m\vert -m_{\max}, m_{\max})$. 
Consequently, in order to complete the classification of $\exp(\CA(B_2),m)$, it remains to consider the case where $m$ is a balanced multiplicity. 
\end{remark}

The following theorem is instrumental in constraining the possible values of $\exp(\CA, m)$ when analyzing the exponents of a central $2$-arrangement $\CA$ with a balanced multiplicity $m$.

\begin{theorem}[{\cite[Thm.~1.6]{MR3070120}}]\label{theorem: peak points}
    Let $\CA$ be a central $2$-arrangement over a field of characteristic zero with $\vert\CA\vert>2$. If $m:\CA\to\ZZ_{>0}$ is balanced and $\exp(\CA,m)=(d_1,d_2)$, then $\Delta(m):=\vert d_1-d_2\vert\leq \vert\CA\vert-2$.
\end{theorem}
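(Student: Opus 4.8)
The final statement to prove is Theorem~\ref{theorem: peak points}, which bounds the difference of exponents $\Delta(m) = |d_1 - d_2|$ by $|\CA| - 2$ for a balanced multiplicity on a central $2$-arrangement. My plan is to work with Ziegler's characterization of $D(\CA,m)$ for rank-two arrangements, where freeness is automatic (Remark, part~1), so the real content is controlling the \emph{degrees} of a homogeneous basis. The key identity is that for any rank-two multiarrangement the exponents satisfy $d_1 + d_2 = |m|$, which follows from comparing the Poincaré series of $D(\CA,m)$ with the global degree count; hence $\Delta(m) = |d_1 - d_2|$ and $|m|$ have the same parity, and bounding $\Delta(m)$ is equivalent to bounding the smaller exponent $d_1$ from below by $\tfrac{1}{2}(|m| - (|\CA| - 2))$.

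First I would set up coordinates so that $S = \KK[x,y]$ and express each linear form $\alpha_H$, reducing to the concrete condition that $\theta = f\,\partial_x + g\,\partial_y \in D(\CA,m)$ iff $\theta(\alpha_H) \in \alpha_H^{m(H)} S$ for every $H$. The strategy is to produce a nonzero derivation of controlled degree: if I can exhibit a logarithmic derivation $\theta \neq 0$ of polynomial degree $d$, then the smaller exponent satisfies $d_1 \le d$, and combined with $d_1 + d_2 = |m|$ this forces $d_2 \ge |m| - d$, giving an \emph{upper} bound on $d_2 - d_1$. So the aim is to construct, by a dimension-counting or linear-algebra argument, a logarithmic derivation whose degree does not exceed roughly $\tfrac{1}{2}(|m| + |\CA| - 2)$.

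The concrete mechanism I would use is the interplay between $D(\CA,m)$ and its ``derivative'' obtained by lowering one multiplicity, which is the standard Ziegler/Abe technique (the Euler-multiplicity and deletion-restriction for multiarrangements): restricting attention to the balanced condition $m_i < |m|/2$ ensures no single multiplicity dominates, so the arrangement cannot degenerate to the unbalanced case of the Remark where $\Delta(m) = |m| - 2m_{\max}$ can be large. Concretely, I would count the dimension of the space of derivations of each fixed degree by setting up the system of divisibility conditions $\theta(\alpha_H) \equiv 0 \pmod{\alpha_H^{m(H)}}$ as linear constraints on the coefficients of $f$ and $g$; each hyperplane of multiplicity $m(H)$ imposes $m(H)$ conditions, and a careful tally shows that a nonzero solution of degree $\le \lceil (|m| + |\CA|)/2 \rceil - 1$ must exist, yielding the bound $\Delta(m) \le |\CA| - 2$.

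The main obstacle will be the final counting step: naively, the linear system for degree-$d$ derivations has $2(d+1)$ unknowns (coefficients of $f,g$) and $|m|$ constraints, and a solution exists whenever $2(d+1) > |m|$, i.e.\ $d \ge |m|/2$, which only gives $\Delta(m) \le 0$ or similar and is far too weak. The difficulty is that the constraints are \emph{not} independent, and extracting the sharper bound requires exploiting the balanced hypothesis to show the constraint matrix has a controlled corank. I expect this to hinge on a Saito-type determinant or basis argument: one must show that the two basis derivations cannot have degrees differing by more than $|\CA| - 2$ by relating the off-diagonal degree gap to the number of hyperplanes rather than to $|m|$. Handling this rank deficiency precisely — presumably via the structure of the associated ideal $\prod \alpha_H^{m(H)}$ and the way balancedness prevents any one form from forcing a large degree gap — is where the real work lies, and is the step I would expect to occupy the bulk of a full proof.
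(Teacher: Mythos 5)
The paper does not prove this statement at all: it is imported verbatim from Abe \cite[Thm.~1.6]{MR3070120} and used as a black box, so there is no internal proof to compare against and your attempt must stand on its own. It does not, because its central mechanism points in the wrong direction. You correctly observe at the outset that, since $d_1+d_2=|m|$ in rank two, the claim $\Delta(m)\le|\CA|-2$ is \emph{equivalent} to the lower bound $d_1\ge\tfrac12\bigl(|m|-|\CA|+2\bigr)$, i.e.\ to the \emph{non-existence} of a nonzero logarithmic derivation in every degree below this threshold. But your proof mechanism then delivers the opposite: exhibiting a nonzero $\theta\in D(\CA,m)$ of degree $d$ gives $d_1\le d$ and $d_2\ge |m|-d$, hence $d_2-d_1\ge |m|-2d$ --- a \emph{lower} bound on the gap, not the claimed ``upper bound on $d_2-d_1$''. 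The same inversion appears in your remark that the naive count ``only gives $\Delta(m)\le 0$'': the existence of a solution in degree roughly $|m|/2$ merely reproves $d_1\le d_2$. This is fatal to the counting strategy as a whole, since an unknowns-minus-constraints estimate can only certify that the solution space in a given degree is \emph{nonzero}; it can never certify that it is zero, so no refinement of coranks along these lines can establish the required vanishing in degrees below $\tfrac12(|m|-|\CA|+2)$.

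What a correct argument must supply is exactly what the sketch defers. Either (i) show that for balanced $m$ the divisibility conditions $\alpha_H^{m(H)}\mid\theta(\alpha_H)$ are linearly independent in every degree $d<\tfrac12(|m|-|\CA|+2)$ --- noting that independence cannot hold in all degrees where each hyperplane imposes its full $m(H)$ conditions, since that would force the much stronger conclusion $\Delta(m)\le 1$, so one must quantify precisely how far the rank of the constraint system can drop and show balancedness caps that drop; or (ii) construct a \emph{pair} of derivations of degrees $d$ and $|m|-d$ with $\bigl||m|-2d\bigr|\le|\CA|-2$ whose coefficient determinant is a nonzero scalar multiple of $Q(\CA,m)$, so that Ziegler's Saito-type criterion certifies them as a basis (compare the explicit bases behind Theorem~\ref{theorem:FWY} in the appendix). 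Note also that bounding $d_2$ from above by a dimension count would require a derivation provably \emph{not} an $S$-multiple of the minimal one, which your corank discussion does not provide: the naive count $\dim D(\CA,m)_d\ge 2(d+1)-|m|$ exactly matches the contribution of $S\cdot\theta_1$ at $d=d_2$, so it yields no improvement. Your final paragraph gestures at a ``Saito-type determinant or basis argument'', which is indeed the viable route, but as written the key step is logically reversed and the balanced hypothesis (and characteristic zero) is never actually brought to bear.
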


\begin{remark}\label{Remark: peak points and A2 classification}
    \begin{enumerate}
        \item With notation as in Theorem \ref{theorem: peak points} the multiplicity $m$ is called a \emph{peak point} of $\CA$ if $\vert d_1-d_2\vert=\vert\CA\vert-2$. Additional information on the combinatorial importance of peak points can be found in \cite{MR2873095}.
        \item Recall that the Coxeter multiarrangement $\CA(A_2)$ of type $A_2$ is defined by $$Q(\CA(A_2),m)=x^{m_1}y^{m_2}(x-y)^{m_3}.$$ Therefore $\vert \CA\vert=3$ and if $m$ is balanced, then Theorem \ref{theorem: peak points} shows that $$\exp(\CA(A_2),m)=(\frac{\vert m\vert}{2},\frac{\vert m\vert}{2})\text{ if }\vert m\vert\text{ is even or }\exp(\CA(A_2),m)=(\lfloor\frac{\vert m\vert}{2}\rfloor,\lceil\frac{\vert m\vert}{2}\rceil)\text{ if }\vert m\vert\text{ is odd.}$$
        By combining this with Remark \ref{Remark: Unbalanced m exponents} in the case where $m$ is unbalanced, we recover the full classification of $\exp(\CA(A_2),m)$ for arbitrary multiplicities $m$, originally established by Wakamiko in \cite{MR2328057}.
    \end{enumerate}
\end{remark}

Let $m$ be a multiplicity on $\CA(B_2)$. Theorem \ref{theorem: peak points} enables us to constrain the possible values of $\exp(\CA(B_2),m)$ as follows.

\begin{corollary}\label{corollary: deltas for m}
        Let $m$ be a balanced multiplicity on $\CA(B_2)$. Then we have
        \begin{enumerate}
            \item $|m|$ is odd if and only if $\Delta(\CA(B_2),m)=1$.
            \item $|m|$ is even if and only if $\Delta(\CA(B_2),m)\in\{0,2\}$. 
        \end{enumerate}
\end{corollary}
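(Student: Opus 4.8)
The plan is to reduce both biconditionals to a single parity observation, combined with the upper bound coming from Theorem \ref{theorem: peak points}. The essential input I would use is the standard fact that for any central rank two multiarrangement the exponents sum to the cardinality: writing $\exp(\CA(B_2),m)=(d_1,d_2)$, one has $d_1+d_2=\vert m\vert$. This follows from Saito's criterion in its multiarrangement form (Ziegler \cite{MR1000610}): since $\rank(\CA(B_2))=2$, the module $D(\CA(B_2),m)$ is free, and if $\{\theta_1,\theta_2\}$ is a homogeneous basis then the determinant of the coefficient matrix $(\theta_i(x_j))_{i,j}$ is a nonzero scalar multiple of $Q(\CA(B_2),m)$. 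Comparing degrees gives $d_1+d_2=\deg Q(\CA(B_2),m)=\vert m\vert$.

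Next I would record the elementary parity observation that $d_1+d_2$ and $d_1-d_2$ always share the same parity, since their sum is $2d_1$. Combined with the degree identity above this yields
\[
\Delta(\CA(B_2),m)=\vert d_1-d_2\vert\equiv d_1+d_2=\vert m\vert\pmod 2,
\]
so $\Delta(\CA(B_2),m)$ is odd exactly when $\vert m\vert$ is odd and even exactly when $\vert m\vert$ is even. At this point I would invoke Theorem \ref{theorem: peak points}: because $m$ is balanced and $\vert\CA(B_2)\vert=4>2$, we obtain $\Delta(\CA(B_2),m)\leq\vert\CA(B_2)\vert-2=2$, so that $\Delta(\CA(B_2),m)\in\{0,1,2\}$.

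Finally I would combine these two facts to close both parts at once. If $\vert m\vert$ is odd then $\Delta(\CA(B_2),m)$ is odd and at most $2$, which forces $\Delta(\CA(B_2),m)=1$; conversely, $\Delta(\CA(B_2),m)=1$ is odd and hence forces $\vert m\vert$ odd, giving statement (1). If $\vert m\vert$ is even then $\Delta(\CA(B_2),m)$ is even and lies in $\{0,1,2\}$, hence $\Delta(\CA(B_2),m)\in\{0,2\}$; conversely any value in $\{0,2\}$ is even and forces $\vert m\vert$ even, giving statement (2). I expect no serious obstacle here: the argument is essentially a parity count once the two inputs are in place. The only point meriting care is the justification of $d_1+d_2=\vert m\vert$, which rests on Saito's criterion and the rank two freeness result; one should also check explicitly that the hypotheses of Theorem \ref{theorem: peak points}, namely $\vert\CA\vert>2$ and balancedness of $m$, are exactly the standing assumptions of the corollary.
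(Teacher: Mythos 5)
Your proposal is correct and follows essentially the same route as the paper, which states this corollary without a separate proof as an immediate consequence of Theorem \ref{theorem: peak points}: the two ingredients you supply --- the degree identity $d_1+d_2=\vert m\vert$ (from Ziegler's rank-two freeness plus Saito's criterion) and the bound $\Delta(\CA(B_2),m)\leq\vert\CA(B_2)\vert-2=2$ --- combined with the parity observation $\vert d_1-d_2\vert\equiv d_1+d_2\pmod 2$ are precisely what the intended deduction rests on (compare the analogous use of Theorem \ref{theorem: peak points} for $\CA(A_2)$ in Remark \ref{Remark: peak points and A2 classification}). Nothing further is needed.
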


It is sufficient to classify $\exp(\CA(B_2),m)$ up to some permutations.

\begin{remark}\label{remark:permutation}
    For a multiplicity $m=(m_1,m_2,m_3,m_4)$ on $\CA=\CA(B_2)$, 
    one can show that the multiarrangement $(\CA,m)$ is isomorphic to the multiarrangement $(\CA,m_{perm})$, where $$m_{perm}\in\{(m_1,m_2,m_4,m_3), (m_2,m_1,m_3,m_4), (m_3,m_4,m_1,m_2)\}.$$ 
    In particular, this implies that $\exp(\CA,m)=\exp(\CA,m_{perm})$. Therefore, when classifying the exponents of $(\CA(B_2),m)$ for an arbitrary multiplicity $m$, it is sufficient to assume, without loss of generality, that $m_2-m_1 \geq m_4-m_3 \geq 0$.
\end{remark}

The following corollary can be established using results by Maehara and Numata \cite{2312.06356}, as well as Feigin, Wang and Yoshinaga \cite{2309.01287}. Since its proof is independent of the main development of this article, we defer it to the \hyperref[appendix]{Appendix}, where the relevant results are also reviewed.

\begin{corollary}\label{corollary: deletion exponents appendixproof}
    Let $(\CA,m)=(\CA(B_2),(m_1,m_2,m_3,m_4))$ with $m$ balanced. Let $\min\{m_i\mid1\leq i\leq4\}=m_4=1$. Then $m$ is a peak point if and only if $m_1=m_2$, $m_3\in 2\ZZ+1$ and $|m|\in 4\ZZ$. 
\end{corollary}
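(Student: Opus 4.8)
The plan is to reduce the computation of $\exp(\CA(B_2),m)$ for $m=(m_1,m_2,m_3,1)$ to the already-understood $A_2$ case by deleting the hyperplane carrying multiplicity $1$, and then to decide, using the explicit results recalled in the \hyperref[appendix]{Appendix}, which of the two exponents increases when that hyperplane is restored.

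First I would record two structural reductions. Since $\rank(\CA(B_2))=2$, the pair $(\CA(B_2),m)$ is free and its exponents satisfy $d_1+d_2=|m|$, so writing $\exp(\CA(B_2),m)=(d_1,d_2)$ one has $\{d_1,d_2\}=\{\tfrac{|m|-\Delta}{2},\tfrac{|m|+\Delta}{2}\}$ with $\Delta=\Delta(\CA(B_2),m)$. As $|\CA(B_2)|=4$, being a peak point means exactly $\Delta=2$. By Corollary~\ref{corollary: deltas for m} this already forces $|m|$ to be even and, conversely, for $|m|$ even it leaves only the dichotomy $\Delta\in\{0,2\}$. Thus the whole statement comes down to deciding, for balanced $m$ with $\min\{m_i\}=m_4=1$ and $|m|$ even, when $\Delta=2$ rather than $\Delta=0$.

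Next I would carry out the deletion. Because $m_4=1$ is minimal, removing $\ker(x+y)$ turns $(\CA(B_2),m)$ into the $A_2$-multiarrangement $(\CA(A_2),(m_1,m_2,m_3))$ of total multiplicity $|m|-1$. When $|m|$ is even the number $m_1+m_2+m_3$ is odd, and I would check that balancedness transfers: since $|m|$ is even, the bound $m_i<\tfrac{|m|}{2}$ gives $m_i\le\tfrac{|m|}{2}-1<\tfrac{m_1+m_2+m_3}{2}$ for $i\in\{1,2,3\}$, so $(m_1,m_2,m_3)$ is balanced on $\CA(A_2)$. Hence by Remark~\ref{Remark: peak points and A2 classification} one has $\exp(\CA(A_2),(m_1,m_2,m_3))=(\tfrac{|m|-2}{2},\tfrac{|m|}{2})$. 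Restoring the hyperplane raises the total multiplicity by one and, by the addition/deletion behaviour of exponents in rank two, increases exactly one of these two values by $1$. Therefore $\exp(\CA(B_2),m)$ is either $(\tfrac{|m|}{2},\tfrac{|m|}{2})$, i.e. $\Delta=0$, or $(\tfrac{|m|-2}{2},\tfrac{|m|+2}{2})$, i.e. $\Delta=2$.

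The remaining and decisive step — which I expect to be the main obstacle — is to determine \emph{which} of the two exponents increases, since Theorem~\ref{theorem: peak points} and Corollary~\ref{corollary: deltas for m} only bound $\Delta$ and fix its parity and cannot by themselves separate $\Delta=0$ from $\Delta=2$. This genuinely requires module-level information, and it is here that I would invoke the explicit exponent computations of Maehara--Numata and of Feigin--Wang--Yoshinaga recalled in the \hyperref[appendix]{Appendix}, applied to the one-parameter family $(m_1,m_2,m_3,1)$. Small consistency checks indicate that the jump direction is not governed by $|m|\bmod 4$ alone: one finds that $(1,1,1,1)$ and $(2,2,3,1)$ are peaks while $(2,2,1,1)$ and $(1,3,3,1)$ are not, even though $(1,3,3,1)$ already satisfies $4\mid|m|$. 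Feeding the family into those results, I would show that the \emph{larger} exponent is the one that increases — producing $\Delta=2$ — precisely when $m_1=m_2$ and $4\mid|m|$, while in every other balanced case with $|m|$ even the smaller exponent increases and $\Delta=0$. Finally, since $m_1=m_2$ together with $4\mid|m|$ (recall $m_4=1$) forces $m_1+m_2+m_3=|m|-1$ to be odd and hence $m_3$ odd, the resulting criterion is exactly $m_1=m_2$, $m_3\in2\ZZ+1$ and $|m|\in4\ZZ$, which is the assertion.
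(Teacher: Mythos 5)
Your reduction is exactly the paper's: delete the multiplicity-one hyperplane $\ker(x+y)$ to land on $(\CA(A_2),(m_1,m_2,m_3))$, use balancedness and Corollary~\ref{corollary: deltas for m} to reduce everything to the dichotomy $\Delta\in\{0,2\}$, and then try to decide which case occurs. (Your intermediate appeal to an ``addition/deletion behaviour of exponents in rank two'' is not needed for this and would itself require justification; Corollary~\ref{corollary: deltas for m} already gives the dichotomy, as you note.) The problem is that your decisive step is not a proof but a restatement of the goal: ``feeding the family into those results, I would show that the larger exponent is the one that increases precisely when $m_1=m_2$ and $4\mid|m|$.'' Neither appendix result delivers this as a black box. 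The Maehara--Numata classification (Corollary~\ref{thm:main}) only applies when $|m_4-m_3|\leq 1$, or $|m_4-m_3|=2$ with $m_1,m_2$ odd; with $m_4=1$ and $m_3$ arbitrary (e.g.\ $(5,5,7,1)$) it simply does not cover the family you need. And Theorem~\ref{theorem:FWY} supplies explicit generators $\theta_{a,b,c}$, $\theta_{a,b,c+1}$ of the deleted $A_2$ module --- it does not by itself say when $m$ is a peak point.

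What is actually required, and what the paper proves, is the bridge between the two: since $m_4=1$, one has $D(\CA(B_2),m)=\{\theta\in D(\CA(A_2),(m_1,m_2,m_3))\mid\theta(x+y)\in(x+y)S\}$, so $m$ is a peak point if and only if the \emph{lower-degree} generator $\theta_{a,b,c}$ (of degree $\frac{|m|}{2}-1$) already satisfies $\theta_{a,b,c}(x+y)\in(x+y)S$, with $(a,b,c)=(\frac{-m_1+m_2+m_3-1}{2},\frac{m_1-m_2+m_3-1}{2},\frac{m_1+m_2-m_3-1}{2})$. Deciding when this divisibility holds is the genuine content: the paper shows (Proposition~\ref{proposition:differentialformula}) that it is equivalent to the vanishing of the integral $I(a,b,c)=(\theta_{a,b,c}(x+y))_{x=1,y=-1}$, and then proves by a sign analysis of the integrand $t^c(t-1)^a(t+1)^a\bigl((t-1)^n-(-1)^{a+b+c}(t+1)^n\bigr)$ on $(0,1)$ that $I(a,b,c)=0$ if and only if $a=b$ and $a+b+c\in 2\ZZ$, i.e.\ $m_1=m_2$ and $|m|\in 4\ZZ$ (which forces $m_3$ odd). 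Your proposal correctly identifies this as the main obstacle and even has the right consistency checks, but omits the argument itself; as written, the ``only if'' and ``if'' directions of the criterion are both unproven.
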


We illustrate the implications of this corollary in the following example.

\begin{example}\label{example: k equal to 3 is peakpoint}
Consider the multiarrangement $(\CA(B_2),m_k)=(\CA(B_2),(2,k,1,k))$ and let $k=3$. Fix an arbitrary hyperplane $H\in\CA(B_2)\backslash\{\ker(x-y)\}$. Applying Corollary \ref{corollary: deletion exponents appendixproof}, we obtain $\exp(\CA(B_2),(2,2,1,3))=(3,5)$, whereas $\exp(\CA(B_2),(1,3,1,3))=\exp(\CA(B_2),(2,3,1,2))=(4,4)$.
Now consider the case $k\geq 4$. In this situation, reducing the multiplicity of any hyperplane $H\in\CA(B_2)\backslash\{\ker(x-y)\}$ by $1$ will not yield a peak point for $\CA(B_2)$, since the condition $m_1=m_2$ from Corollary \ref{corollary: deletion exponents appendixproof} is not satisfied.
\end{example}

\section{Reformulating Yoshinaga's Theorem}
In this section, we present the central tool utilized in the proof of Theorem \ref{theorem:MainResult} and illustrate its application through several examples. For a multiarrangement $(\CA,m)$, the concepts of locally and globally mixed products were introduced by Abe, Terao, and Wakefield in \cite{ATW07}. We adapt these definitions to suit the context of our work as follows.

\begin{definition}
1. For any $X\in L_2(\CA)$, the localization $(\CA_X,m_X)$ is free with exponents $\exp(\CA_X,m_X)=(d_1^X,d_2^X)$. The \emph{locally mixed product} of $(\CA,m)$ is then defined by
		\[\LMP (\CA,m)=\sum_{X\in L_2(\CA)} d_1^Xd_2^X.\]   
2. If $(\CA,m)$ is free with exponents $\exp(\CA,m)=(d_1,\dots,d_\ell)$, the \emph{globally mixed product} is defined by \[\GMP (\CA,m)=\sum d_{i_1}d_{i_2},\] where the sum is taken over all unordered pairs $\{d_{i_1},d_{i_2}\}\subset\exp(\CA,m)$.\\
If $m \equiv 1$ (i.e. $\CA$ is a simple arrangement), we abbreviate by writing $\LMP(\CA)$ and $\GMP(\CA)$ instead of $\LMP(\CA,m)$ and $\GMP(\CA,m)$, respectively.
\end{definition} 

Since $\GMP(\CA,m)$ is defined only for free arrangements, we introduce the following generalization.

\begin{definition} 
Let $\CA$ be a simple arrangement of rank three and let $H\in\CA$. Then the Ziegler restriction $(\CA^H,m^H)$ is free. If $\exp(\CA^H,m^H)=(d_1,d_2)$, we define the \emph{virtually globally mixed product} of $\CA$ with respect to $H$ by \[\VGMP(\CA, H)=d_1+d_2+d_1d_2.\]
Note that if $\CA$ is free, then by Theorem \ref{theorem: ziegler restriction} and Theorem \ref{theorem: Yoshinaga ChaPol} we have $\GMP(\CA)=\VGMP(\CA,H)$ for all $H\in\CA$.
\end{definition} 

Using this terminology, we may reformulate Theorem \ref{theorem: Yoshinaga ChaPol} as follows. 

\begin{lemma}\label{Lemma: LMP>=GMP for extensions}
Let $\CA$ be a central $3$-arrangement, and let $H\in\CA$. Then $$\LMP(\CA)\geq \VGMP(\CA, H).$$
In particular, $\CA$ is free if and only if $\LMP(\CA)=\VGMP(\CA, H)$ for all $H\in\CA$.
\end{lemma}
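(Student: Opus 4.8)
The plan is to reduce the statement entirely to Theorem~\ref{theorem: Yoshinaga ChaPol} by unwinding the definitions of $\LMP$ and $\VGMP$ in terms of the coefficients of the characteristic polynomial $\chi(\CA;t)$. Write $\chi(\CA;t)=(t-1)(t^2-b_1t+b_2)$ and fix $H\in\CA$ with $\exp(\CA^H,m^H)=(d_1,d_2)$. By definition $\VGMP(\CA,H)=d_1+d_2+d_1d_2$. First I would record the two key numerical identities. On the one hand, from Remark~\ref{Remark: Altering the coefficients of chi(A,t)} we have $b_1+b_2=\sum_{X\in L_2(\CA)}(\vert\CA_X\vert-1)$, and since each rank-two localization $(\CA_X,m_X)$ is a simple rank-two arrangement with $\vert\CA_X\vert$ hyperplanes, its exponents are $(1,\vert\CA_X\vert-1)$, so $d_1^Xd_2^X=\vert\CA_X\vert-1$. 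Summing gives
\begin{equation}\label{eq:lmp-b1b2}
\LMP(\CA)=\sum_{X\in L_2(\CA)}d_1^Xd_2^X=\sum_{X\in L_2(\CA)}(\vert\CA_X\vert-1)=b_1+b_2.
\end{equation}
On the other hand, since $(\CA^H,m^H)$ is a Ziegler restriction with $\vert m^H\vert=\vert\CA\vert-1=b_1+1$ (the total multiplicity equals $\sum_X(\vert\CA_X\vert-1)$ counted through $H$, which matches the coefficient of $t^2$), we have $d_1+d_2=\vert m^H\vert=b_1+1$, so that $\VGMP(\CA,H)=b_1+1+d_1d_2$.

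Next I would combine these. The inequality $\LMP(\CA)\geq\VGMP(\CA,H)$ becomes $b_1+b_2\geq b_1+1+d_1d_2$, i.e. $b_2\geq d_1d_2+1>d_1d_2$; but more sharply, Theorem~\ref{theorem: Yoshinaga ChaPol}(1) asserts exactly $b_2\geq d_1d_2$, which is equivalent (after adding $b_1$ and using $d_1+d_2=b_1+1$) to $\LMP(\CA)\geq\VGMP(\CA,H)$. This is the first assertion. For the ``in particular'' clause, suppose $\CA$ is free; then by Theorem~\ref{theorem: ziegler restriction} $\exp(\CA)=(1,d_1,d_2)$, so $\chi(\CA;t)=(t-1)(t-d_1)(t-d_2)$ forces $b_2=d_1d_2$, whence \eqref{eq:lmp-b1b2} gives $\LMP(\CA)=b_1+b_2=(d_1+d_2)+d_1d_2=\VGMP(\CA,H)$ for every $H\in\CA$ (the remark after the definition of $\VGMP$ already records this). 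Conversely, if $\LMP(\CA)=\VGMP(\CA,H)$ for some (hence, one checks, by the above computation for any) $H$, then $b_2=d_1d_2$, and Theorem~\ref{theorem: Yoshinaga ChaPol}(2) yields that $\CA$ is free.

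The main obstacle I expect is bookkeeping rather than conceptual: I must verify carefully that $d_1+d_2=b_1+1$, i.e. that the total Ziegler multiplicity $\vert m^H\vert$ equals $\vert\CA\vert-1$ and that this matches the coefficient extracted from $\chi(\CA;t)$. Here $m^H(X)=\vert\CA_X\vert-1$ for $X\in\CA^H$, and summing over the restriction must be related to $\vert\CA\vert$ via the incidence of hyperplanes through $H$; I would confirm this using the standard count $\sum_{X\in\CA^H}(\vert\CA_X\vert-1)=\vert\CA\vert-1$, which is where freeness of the Ziegler restriction and the degree-matching of Theorem~\ref{theorem: ziegler restriction} enter. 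A secondary subtlety is the quantifier in the biconditional: the forward direction of freeness gives equality for \emph{all} $H$, while for the converse one only needs equality for a single $H$; I would make sure the phrasing of the statement (``if and only if $\ldots$ for all $H$'') is respected by noting that under freeness the identity holds uniformly, and that failure of freeness means $b_2>d_1d_2$ strictly, so the inequality is strict for every $H$, giving the clean equivalence.
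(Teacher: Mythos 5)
Your route is the paper's route: express $\LMP(\CA)$ as $b_1+b_2$ via the rank-two localizations, express $\VGMP(\CA,H)$ in terms of $d_1,d_2$, and reduce both the inequality and the freeness criterion to Theorem~\ref{theorem: Yoshinaga ChaPol}. However, your central bookkeeping identity contains an off-by-one error that, taken at face value, breaks the derivation. From $\chi(\CA;t)=(t-1)(t^2-b_1t+b_2)=t^3-(b_1+1)t^2+(b_1+b_2)t-b_2$ and the fact that the $t^2$-coefficient of a central simple $3$-arrangement equals $-\vert\CA\vert$, one gets $\vert\CA\vert=b_1+1$; and since each $H'\in\CA\setminus\{H\}$ meets $H$ in exactly one $X\in\CA^H$, the Ziegler multiplicity satisfies $\vert m^H\vert=\sum_{X\in\CA^H}(\vert\CA_X\vert-1)=\vert\CA\vert-1$. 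Hence the correct identity is $d_1+d_2=\vert m^H\vert=b_1$, \emph{not} $b_1+1$ as you assert. Under your stated identity, $\LMP(\CA)\geq\VGMP(\CA,H)$ would read $b_2\geq d_1d_2+1$, which is strictly stronger than Theorem~\ref{theorem: Yoshinaga ChaPol}(1) and is simply false in general (equality $b_2=d_1d_2$ occurs exactly when $\CA$ is free), so your claim that $b_2\geq d_1d_2$ is ``equivalent, after adding $b_1$,'' to the desired inequality does not hold with $d_1+d_2=b_1+1$. Your own forward freeness direction exposes the inconsistency: comparing $(t-1)(t-d_1)(t-d_2)$ with $(t-1)(t^2-b_1t+b_2)$ there, you correctly extract $b_1=d_1+d_2$, contradicting the identity you used two sentences earlier.

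The fix is purely arithmetic: replace $d_1+d_2=b_1+1$ by $d_1+d_2=b_1$ throughout. Then everything proceeds exactly as in the paper's proof: $\LMP(\CA)=b_1+b_2\geq b_1+d_1d_2=d_1+d_2+d_1d_2=\VGMP(\CA,H)$ by Theorem~\ref{theorem: Yoshinaga ChaPol}(1), with equality if and only if $b_2=d_1d_2$, which by part (2) is equivalent to freeness of $\CA$. Your handling of the quantifier in the biconditional is correct and matches the paper's reading: freeness forces equality uniformly in $H$ (via Theorem~\ref{theorem: ziegler restriction}), while non-freeness gives $b_2>d_1d_2$ and hence strict inequality for every $H$, so equality at a single $H$ already suffices for the converse.
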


\begin{proof}
    Let $H\in\CA$ and suppose $\exp(\CA^H,m^H)=(d_1,d_2)$. Since the localization $\CA_X$ is free for all $X\in L_2(\CA)$, the locally mixed product $\LMP(\CA)$ is always defined. As $\CA$ is a simple arrangement, it holds that if $\vert \CA_X\vert=d$, then $\exp(\CA_X)=(1,d-1)$. Therefore, $$\LMP(\CA)=\sum_{X\in L_2(\CA)}(\vert\CA_X\vert-1)=\sum_{X\in L_2(\CA)}\mu(X).$$ This sum corresponds to the coefficient of $t$ in $\chi(\CA;t)$. We now adapt the notation of $\chi(\CA;t)$ as in Theorem \ref{theorem: Yoshinaga ChaPol}, and compare coefficients. From the definition of $\chi(\CA;t)$ and the fact that $\CA$ is a simple arrangement with $\rank(\CA)=3$, the coefficient of $t^2$ is $b_1=\vert\CA\vert-1$. On the other hand Theorem \ref{theorem: Yoshinaga ChaPol} implies that $$\vert\CA\vert=1+d_1+d_2 \implies b_1=d_1+d_2.$$ Moreover, since $b_1+b_2=\LMP(\CA)$, and Theorem \ref{theorem: Yoshinaga ChaPol} gives the inequality $b_2\geq d_1d_2$, we obtain $$\LMP(\CA)=b_1+b_2\geq d_1+d_2+d_1d_2=\VGMP(\CA,H).$$ Equality holds if and only if $b_2=d_1d_2$, which, by Theorem~\ref{theorem: Yoshinaga ChaPol}, is equivalent to the freeness of $\CA$. 
    \end{proof}

The proof of Lemma \ref{Lemma: LMP>=GMP for extensions} gives rise to the following Corollary.

\begin{corollary}
    Let $\CA$ be a central $3$-arrangement. Then its characteristic polynomial is given by $$\chi(\CA;t)=t^3-\vert\CA\vert t^2+\LMP(\CA) t-b_2.$$
\end{corollary}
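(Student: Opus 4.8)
The plan is to read off the three non-trivial coefficients of $\chi(\CA;t)$ directly from its definition as a Möbius sum over $L(\CA)$, stratified by rank, and to recognize each of them as one of the quantities named in the statement. This makes the corollary essentially a bookkeeping consequence of the computation already performed in the proof of Lemma~\ref{Lemma: LMP>=GMP for extensions}.

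First I would write $\chi(\CA;t)=\sum_{X\in L(\CA)}\mu(X)t^{\dim X}$ and split the sum according to the rank strata. Since $\CA$ is a central $3$-arrangement, every flat $X$ satisfies $\dim X=3-\rank(X)$, so the strata $L_0(\CA),L_1(\CA),L_2(\CA),L_3(\CA)$ supply the monomials $t^3,t^2,t,t^0$ respectively. The top flat $V$ contributes $\mu(V)t^3=t^3$; each of the $\vert\CA\vert$ hyperplanes is an atom with Möbius value $-1$, contributing $-\vert\CA\vert t^2$; and each $X\in L_2(\CA)$ satisfies $\mu(X)=\vert\CA_X\vert-1$ (as recorded in Remark~\ref{Remark: Altering the coefficients of chi(A,t)}), so the coefficient of $t$ equals $\sum_{X\in L_2(\CA)}(\vert\CA_X\vert-1)$.

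It then remains to identify this last sum with $\LMP(\CA)$ and to pin down the constant term. For the former I would invoke the observation from the proof of Lemma~\ref{Lemma: LMP>=GMP for extensions} that, $\CA$ being simple, every rank-two localization satisfies $\exp(\CA_X)=(1,\vert\CA_X\vert-1)$, so that $d_1^Xd_2^X=\vert\CA_X\vert-1$ and hence $\LMP(\CA)=\sum_{X\in L_2(\CA)}(\vert\CA_X\vert-1)$ is exactly the coefficient of $t$ computed above. For the constant term, comparison with the factored form $\chi(\CA;t)=(t-1)(t^2-b_1t+b_2)$ supplied by Theorem~\ref{theorem: Yoshinaga ChaPol} shows that it equals $-b_2$ (this also covers the non-essential case, where $L_3(\CA)=\emptyset$ forces both the constant term and $b_2$ to vanish). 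Assembling the four coefficients yields $\chi(\CA;t)=t^3-\vert\CA\vert t^2+\LMP(\CA)t-b_2$, as claimed. Since each step is a direct evaluation, there is no genuine obstacle; the only point demanding care is the sign and dimension bookkeeping in the Möbius stratification, which must be kept consistent with the normalization of $\chi(\CA;t)$ fixed earlier in the paper.
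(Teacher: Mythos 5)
Your proof is correct and takes essentially the same route as the paper: there the corollary is presented as an immediate byproduct of the proof of Lemma~\ref{Lemma: LMP>=GMP for extensions}, whose computation --- $\LMP(\CA)=\sum_{X\in L_2(\CA)}(\vert\CA_X\vert-1)=\sum_{X\in L_2(\CA)}\mu(X)$ as the coefficient of $t$, the $t^2$-coefficient $-(b_1+1)=-\vert\CA\vert$, and the constant term $-b_2$ read off from the factorization $\chi(\CA;t)=(t-1)(t^2-b_1t+b_2)$ of Theorem~\ref{theorem: Yoshinaga ChaPol} --- is exactly the rank-stratified M\"obius bookkeeping you carry out. Your additional remark covering the non-essential case is a harmless refinement the paper leaves implicit.
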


We now recall the definition of the \emph{deletion} $(\CA',m')$ of a multiarrangement $(\CA,m)$.
\begin{definition}
     Let $(\CA,m)$ and let $H\in\CA$. The \emph{deletion of $H$} is defined as the pair $(\CA',m')$, where:
\begin{center}
\begin{itemize}
\item if $m(H) > 1$, define $\CA':=\CA$ and $m'(H) := m(H)-1, m'(H') := m(H')$ for all $H'\in\CA\backslash\{H\}$.\\ 
\item if $m(H) = 1$, define $\CA':=\CA\backslash\{H\}$ and $m'(H') := m(H')$ for all $H'\in\CA\backslash\{H\}$.
\end{itemize}
\end{center}
\end{definition}

The following observations simplify the proof of Corollary \ref{corollary: upper bound for restriction hyperplanes}.

\begin{lemma}\label{Lemma: deletion unbalanced then no peak point}
    Let $(\CA,m)$ be a multiarrangement with balanced multiplicity $m$, and suppose that $\exp(\CA,m)=(e_1,e_2)$. Assume there exists a hyperplane $H_0\in\CA$ such that the deletion $(\CA',m')$ of $H_0$ yields an unbalanced multiplicity $m'$. Then the following statements hold:
    \begin{enumerate}
        \item $\vert m\vert$ is odd.
        \item If $(\CA,m)=(\CA(B_2),m)$, then $\exp(\CA',m')=(\frac{\vert m\vert-1}{2},\frac{\vert m\vert-1}{2})$.
    \end{enumerate}
\end{lemma}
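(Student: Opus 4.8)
The plan is to treat the two conclusions separately, deriving (1) by a purely arithmetic argument on cardinalities and then feeding its quantitative refinement into (2) via the Wakefield--Yuzvinsky formula recalled in Remark~\ref{Remark: Unbalanced m exponents}.

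For (1), I would first note that in either branch of the deletion (whether $m(H_0)>1$ or $m(H_0)=1$) the total cardinality drops by exactly one, so $|m'|=|m|-1$. Since $m'$ is unbalanced there is a hyperplane $H_1$ with $m'(H_1)\geq |m'|/2=(|m|-1)/2$. I would next rule out $H_1=H_0$: this can only occur when $m(H_0)>1$, and then $m(H_0)-1\geq (|m|-1)/2$ forces $m(H_0)\geq (|m|+1)/2>|m|/2$, contradicting that $m$ is balanced. Hence $H_1\neq H_0$ and $m'(H_1)=m(H_1)$, giving the two-sided bound $(|m|-1)/2\leq m(H_1)<|m|/2$, where the upper estimate is again the balancedness of $m$. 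The half-open interval $[(|m|-1)/2,\,|m|/2)$ has length $1/2$ and contains an integer precisely when $|m|$ is odd; this yields (1), and moreover pins down $m(H_1)=(|m|-1)/2$ exactly.

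For (2), I would restrict to $\CA=\CA(B_2)$, where $\CA'$ is again an arrangement of rank two (either $\CA(B_2)$ itself, or $\CA(B_2)$ with one hyperplane removed), so that Remark~\ref{Remark: Unbalanced m exponents} applies. I would verify that $m'_{\max}=(|m|-1)/2$: the value is attained at $H_1$ by the previous step, while every other hyperplane satisfies $m'(H')\leq m(H')\leq (|m|-1)/2$ because $m$ is balanced and $|m|$ is odd, and $m'(H_0)=m(H_0)-1$ is strictly smaller still. Applying the Wakefield--Yuzvinsky formula to the unbalanced rank-two pair $(\CA',m')$ then gives $\exp(\CA',m')=(|m'|-m'_{\max},\,m'_{\max})=\left(\tfrac{|m|-1}{2},\,\tfrac{|m|-1}{2}\right)$, as claimed.

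The argument is short, and I do not anticipate a serious obstacle; the only genuine care lies in the bookkeeping of the two deletion cases and in checking that the large hyperplane of $m'$ is not the deleted one. Once $H_1\neq H_0$ is established, both the parity of $|m|$ and the exact value $m'_{\max}=(|m|-1)/2$ drop out of the tight interval $[(|m|-1)/2,\,|m|/2)$. The subtlety worth double-checking is that no hyperplane other than $H_1$ can tie or exceed $(|m|-1)/2$ under $m'$, since this is exactly what guarantees that $m'_{\max}$ equals this value and hence that the two exponents coincide.
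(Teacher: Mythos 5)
Your proposal is correct, and it follows the same basic strategy as the paper (threshold arithmetic for balanced/unbalanced multiplicities, then the Wakefield--Yuzvinsky formula of Remark~\ref{Remark: Unbalanced m exponents}), but it executes part (2) by a genuinely more direct route. The paper first records $m'_{\max}\in\exp(\CA',m')$, invokes Corollary~\ref{corollary: deltas for m} to get $\exp(\CA,m)=\bigl(\frac{\vert m\vert-1}{2},\frac{\vert m\vert+1}{2}\bigr)$, and then eliminates the candidate pair $\bigl(\frac{\vert m\vert-3}{2},\frac{\vert m\vert+1}{2}\bigr)$ by showing either value of $m'_{\max}$ there contradicts the (un)balancedness hypotheses; this elimination tacitly uses the standard fact that deleting one multiplicity from a free $2$-multiarrangement decrements exactly one exponent. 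You avoid both ingredients: your interval argument $\frac{\vert m\vert-1}{2}\leq m(H_1)<\frac{\vert m\vert}{2}$ pins down $m'_{\max}=\frac{\vert m\vert-1}{2}$ exactly (after verifying that no other hyperplane, including the deleted $H_0$, can tie it), and then $\exp(\CA',m')=\bigl(\vert m'\vert-m'_{\max},\,m'_{\max}\bigr)$ gives the conclusion in one step. Your version is also slightly more careful on two points the paper glosses over: the explicit exclusion of $H_1=H_0$ (the paper never addresses the possibility that the deleted hyperplane itself becomes the dominant one), and the opening of the paper's part (1), whose assumption ``$m_1=1+\sum_{2\leq i\leq k}m_i$'' is, as literally written, a condition making $m$ itself unbalanced and is best read as a compressed form of exactly the boundary analysis you carry out rigorously. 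The trade-off: the paper's elimination argument reuses machinery (Corollary~\ref{corollary: deltas for m}) already needed elsewhere, while yours is self-contained and yields the extra quantitative fact $m(H_1)=\frac{\vert m\vert-1}{2}$ for free.
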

\begin{proof}
    First let $(\CA,m)$ with $m=(m_1,m_2,\dots,m_k)$ and without loss of generality assume $m_1=1+\sum_{2\leq i\leq k}m_i$. Then $\vert m\vert=\sum_{1\leq i\leq k}m_i=1+2\cdot(\sum_{2\leq i\leq k}m_i)$ which is an odd number.\\
    Now let $(\CA,m)=(\CA(B_2),m)$ such that $m$ is a balanced multiplicity and there exists a hyperplane $H_0\in\CA$ such that for the deletion $(\CA',m')$ of $H_0$ the multiplicity $m'$ is unbalanced. Let $m'_{\max}:=\max\{m'(H)\mid H\in \CA'\}$, then thanks to Remark \ref{Remark: Unbalanced m exponents} $m'_{\max}\in\exp(\CA',m')$.\\
    Applying Lemma \ref{Lemma: deletion unbalanced then no peak point}(1) shows that $\vert m\vert$ is odd and since $m$ is balanced use Corollary \ref{corollary: deltas for m} to derive $\exp(\CA,m)=(\frac{\vert m\vert-1}{2},\frac{\vert m\vert+1}{2})$. Since $m'_{\max}\in\exp(\CA',m')$ we require $\exp(\CA',m')=(\frac{\vert m\vert-1}{2},\frac{\vert m\vert-1}{2})$, otherwise we get a contradiction to the assumptions on $m$ or $m'$ as follows. If $\exp(\CA',m')=(\frac{\vert m\vert-3}{2},\frac{\vert m\vert+1}{2})$ and $m'_{\max}=\frac{\vert m\vert-3}{2}<\frac{\vert m\vert-1}{2}=\frac{\vert m'\vert}{2}$, then $m'$ is not unbalanced by definition. If $m'_{\max}=\frac{\vert m\vert+1}{2}$, then $m$ was already unbalanced.
\end{proof}

Given a free multiarrangement $(\CA,m)$ with $\exp(\CA,m)=(d_1,d_2)$, we extend the notation $\Delta(m):=\vert d_1-d_2\vert$, originally introduced in Theorem \ref{theorem: peak points} for balanced multiplicities, to also include the case of non balanced multiplicities. We derive the following tool from Lemma \ref{Lemma: LMP>=GMP for extensions} and Lemma \ref{Lemma: deletion unbalanced then no peak point}. 

\begin{corollary}\label{corollary: upper bound for restriction hyperplanes}
    Let $\CE$ be a free extension of $(\CA,m)=(\CA(B_2),m)$ for a balanced multiplicity $m$ such that $(\CE^{H_0},m^{H_0})=(\CA,m)$. Fix $H\in\CE\setminus{H_0}$ and let $\exp(\CA',m')$ be the deletion of $(H\cap H_0)$ from $(\CA,m)$ and $\CE'=\CE\backslash\{H\}$. Then we have
    \begin{enumerate}
        \item If $\vert m\vert$ is odd, then
            \begin{enumerate}
                \item $\vert \CE^{H}\vert\leq \frac{r+3}{2}$ (if $\Delta(m')=2$) or
                \item $\vert \CE^{H}\vert\leq \frac{r+1}{2}$ (if $\Delta(m')=0$).
            \end{enumerate}
        \item If $\vert m \vert$ is even, then 
        \begin{enumerate}
            \item $\vert\CE^{H}\vert\leq \frac{r}{2}$ (if $\Delta(m)=2$) or 
            \item $\vert \CE^{H}\vert\leq \frac{r+2}{2}$ (if $\Delta(m)=0$).
        \end{enumerate} 
    \end{enumerate}
    In particular, the deletion $\CE'$ of $H$ is free if and only if equality holds.
\end{corollary}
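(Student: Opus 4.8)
The plan is to apply Lemma \ref{Lemma: LMP>=GMP for extensions} twice: once to the free arrangement $\CE$ and once to the deletion $\CE'=\CE\setminus\{H\}$, tracking how both $\LMP$ and $\VGMP(\cdot,H_0)$ change. The first point to record is that $(\CA',m')$ is exactly the Ziegler restriction $((\CE')^{H_0},(m')^{H_0})$. Since $\CE$ is central, $H\cap H_0$ is a rank-two flat lying in $H_0$, hence a point of $\CE^{H_0}=\CA(B_2)$; removing $H$ lowers the number of hyperplanes through that flat by one and leaves every other flat of $H_0$ untouched, so the restricted multiplicity at $H\cap H_0$ drops by exactly one. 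That is precisely the deletion of $H\cap H_0$ from $(\CA,m)$. Consequently $\VGMP(\CE',H_0)=d_1'+d_2'+d_1'd_2'$ with $(d_1',d_2')=\exp(\CA',m')$, and because the exponents of a free rank-two multiarrangement sum to its cardinality, $d_1'+d_2'=|m'|=|m|-1$ (here I also use $|\CE|=|m|+1$, hence $e_1+e_2=|m|$ for $(e_1,e_2)=\exp(\CA,m)$).

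Next I would compute the effect of the deletion on the locally mixed product. Writing $\LMP(\CE)=\sum_{X\in L_2(\CE)}(|\CE_X|-1)$ (as in the proof of Lemma \ref{Lemma: LMP>=GMP for extensions}), each rank-two flat $X$ through $H$ has its contribution reduced by exactly one when $H$ is removed: if $|\CE_X|\ge 3$ the term drops from $|\CE_X|-1$ to $|\CE_X|-2$, and if $|\CE_X|=2$ the flat leaves $L_2(\CE')$ entirely, dropping its contribution from $1$ to $0$. Since the number of rank-two flats through $H$ equals $|\CE^H|$, this gives the clean identity $\LMP(\CE')=\LMP(\CE)-|\CE^H|$. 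Because $\CE$ is free, Lemma \ref{Lemma: LMP>=GMP for extensions} yields $\LMP(\CE)=\VGMP(\CE,H_0)=e_1+e_2+e_1e_2=|m|+e_1e_2$. Applying the inequality of Lemma \ref{Lemma: LMP>=GMP for extensions} to $\CE'$ and $H_0$ then gives
\[
|m|+e_1e_2-|\CE^H|=\LMP(\CE')\ge\VGMP(\CE',H_0)=(|m|-1)+d_1'd_2',
\]
which rearranges to $|\CE^H|\le e_1e_2-d_1'd_2'+1$. By Theorem \ref{theorem: Yoshinaga ChaPol}(2) (equivalently, the equality clause of Lemma \ref{Lemma: LMP>=GMP for extensions}) this inequality is an equality precisely when $\CE'$ is free, which is the final ``in particular'' assertion.

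It then remains to substitute the explicit exponent values and split into the stated cases, with $r=|m|$. When $|m|$ is odd, Corollary \ref{corollary: deltas for m} gives $(e_1,e_2)=(\tfrac{|m|-1}{2},\tfrac{|m|+1}{2})$, so $e_1e_2=\tfrac{|m|^2-1}{4}$; the even total $|m'|$ forces $\Delta(m')\in\{0,2\}$, where the unbalanced case is handled by Lemma \ref{Lemma: deletion unbalanced then no peak point}(2) (which also yields $\Delta(m')=0$). Inserting the corresponding $d_1'd_2'$ produces the bounds $\tfrac{|m|+3}{2}$ for $\Delta(m')=2$ and $\tfrac{|m|+1}{2}$ for $\Delta(m')=0$, i.e.\ part (1). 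When $|m|$ is even, Lemma \ref{Lemma: deletion unbalanced then no peak point}(1) shows the deletion $m'$ is necessarily balanced, so the odd total $|m'|$ forces $\Delta(m')=1$ and $d_1'd_2'=\tfrac{|m|^2-2|m|}{4}$; substituting $e_1e_2$ for $\Delta(m)=2$ and $\Delta(m)=0$ (again via Corollary \ref{corollary: deltas for m}) gives $\tfrac{|m|}{2}$ and $\tfrac{|m|+2}{2}$, i.e.\ part (2).

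I expect the main obstacle to be the bookkeeping in the $\LMP$ computation: verifying that \emph{every} rank-two flat through $H$ contributes a change of exactly one, including the two-element flats that vanish under deletion, so that $\LMP(\CE')=\LMP(\CE)-|\CE^H|$ holds without correction terms. A secondary point requiring care is matching balanced versus unbalanced deletions to the correct value of $\Delta$; this is where Lemma \ref{Lemma: deletion unbalanced then no peak point} is essential, since it collapses the unbalanced deletions (in the $|m|$ odd case) into $\Delta(m')=0$ and rules them out entirely in the $|m|$ even case.
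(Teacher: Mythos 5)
Your proposal is correct and follows essentially the same route as the paper's proof: both rest on the identity $\LMP(\CE')=\LMP(\CE)-\vert\CE^H\vert$, a double application of Lemma \ref{Lemma: LMP>=GMP for extensions} (to $\CE$ and to $\CE'$ with respect to $H_0$), the exponent classification from Corollary \ref{corollary: deltas for m}, and Lemma \ref{Lemma: deletion unbalanced then no peak point} to dispose of unbalanced deletions, with the equality clause giving the freeness criterion for $\CE'$. The only (harmless, arguably cleaner) difference is that you first derive the unified bound $\vert\CE^H\vert\leq e_1e_2-d_1'd_2'+1$ and then substitute exponents case by case, whereas the paper plugs in the explicit values of $\LMP$ and $\VGMP$ within each case directly.
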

\begin{proof}
    Assume first that $m'$ is unbalanced. Then, by Lemma \ref{Lemma: deletion unbalanced then no peak point}, $\vert m\vert$ is odd and $\Delta(m')=0$. Thus, this case falls under $(1)(b)$ of Corollary \ref{corollary: upper bound for restriction hyperplanes}. All statements concerning exponents are based on Remark \ref{Remark: Unbalanced m exponents} and Corollary \ref{corollary: deltas for m}.\\ Let $(\CE^{H_0},m^{H_0})=(\CA,m)$ and suppose first that $r:=\vert m\vert$ is odd. Since $\CE$ is free, we have $$\exp(\CE)=\left(1,\frac{r-1}{2}, \frac{r+1}{2}\right),\text{ and by Lemma \ref{Lemma: LMP>=GMP for extensions}, }\LMP(\CE)=\frac{r^2-1}{4} + r.$$ Now fix $H\in\CE\backslash\{H_0\}$, and note that $\exp(\CA',m')\in\{(\frac{r-1}{2}, \frac{r-1}{2}), (\frac{r-3}{2}, \frac{r+1}{2})\},$ depending on whether $\Delta(m')=0$ or $\Delta(m')=2$. By Lemma \ref{Lemma: LMP>=GMP for extensions}, we obtain: $$\LMP(\CE')\geq\VGMP(\CE',H_0)\in\left\{\frac{r^2-3}{4} + \frac{r}{2}, \frac{r^2-7}{4} + \frac{r}{2} \right\}.$$ Since $\CE$ is central, we have $$\LMP(\CE)=\sum_{X\in L_2(\CE)}(\vert\CE_X\vert-1)$$ from which it follows that $$\LMP(\CE')=\LMP(\CE)-\vert \CE^H\vert.$$ Substituting into the previous inequality yields: $$\frac{r^2-1}{4} + r - \vert \CE^H\vert=\LMP(\CE')\geq \VGMP(\CE',H_0)= \frac{r^2-3}{4} + \frac{r}{2},$$ or respectively, 
    $$\frac{r^2-1}{4} + r - \vert \CE^H\vert=\LMP(\CE')\geq \VGMP(\CE',H_0)= \frac{r^2-7}{4} + \frac{r}{2}.$$
    This implies: $$\vert\CE^H\vert\leq\frac{r+1}{2}\text{ or (if } \Delta(m')=2)\mbox{ } \vert\CE^H\vert\leq\frac{r+3}{2}.$$ with equality being equivalent to freeness for $\CE'$, again thanks to Lemma \ref{Lemma: LMP>=GMP for extensions}.\\
    Now suppose that $r$ is even. Then $$\exp(\CA(B_2),m)\in\left\{(\frac{r}{2},\frac{r}{2}),(\frac{r}{2}-1,\frac{r}{2}+1) \right\},$$ and by Lemma \ref{Lemma: LMP>=GMP for extensions}, it follows that $$\LMP(\CE)\in\left\{\frac{r^2 }{4} + r, \frac{r^2 }{4} + r - 1\right\}.$$ Now delete $(H_0\cap H)$ from $(\CA(B_2),m)$. Since the resulting multiplicity $m'$ remains balanced, we obtain $$\exp(\CA(B_2)',m')=\left(\frac{r-2}{2},\frac{r}{2}\right)\text{ and }\VGMP(\CE',H_0)=\frac{r^2}{4}+\frac{r}{2}-1.$$ We now consider the two possible cases for $\LMP(\CE)$: 
    $$\frac{r^2 }{4} + r - \vert \CE^H\vert=\LMP(\CE')\geq \VGMP(\CE',H_0)=\frac{r^2}{4} + \frac{r}{2} - 1$$
    or $$\frac{r^2 }{4} + r- 1 - \vert \CE^H\vert=\LMP(\CE')\geq \VGMP(\CE',H_0)=\frac{r^2}{4} + \frac{r}{2} - 1.$$
    In the first case (i.e. when $\Delta(m)=0$), this implies $\vert\CE^H\vert\leq \frac{r+2}{2},$ and in the second case (i.e. when $\Delta(m)=2$), $\vert\CE^H\vert\leq \frac{r}{2}.$ Moreover, by Lemma \ref{Lemma: LMP>=GMP for extensions}, equality holds if and only if $\CE'$ is free.
\end{proof}

Yoshinaga proved in \cite{MR2077250} and \cite{MR2105827} a famous conjecture of Edelman and Reiner: for every Coxeter multiarrangement $(\CA(W),m)$, where $m$ is a constant multiplicity, the corresponding Yoshinaga extension $\CE(\CA(W),m)$ is free. However, for arbitrary multiplicities $m$, the Yoshinaga-Extension of $(\CA(B_2),m)$ is not necessarily free. Nevertheless, there may exist various other free extensions, as illustrated by the following example.

\begin{example}\label{example: free non-yoshinaga extensions}
    Let $(\CA(B_2),m)$ with $m=(3,5,2,2)$, then $\exp(\CA(B_2),m)=(5,7)$. Suppose there exists a free extension $\CE$ with a hyperplane $H\in \CE$ such that $(\CE^H,m^H)=(\CA(B_2),m)$. In this case, it holds that $\VGMP(\CE,H)=47$. Figure \ref{fig:Yoshinaga-Extension of (3,5,2,2)} shows the deconing of the Yoshinaga-Extension $\CE(\CA(B_2),m)$, for which $\LMP(\CE(\CA(B_2),m))=49$. Thus, by Lemma \ref{Lemma: LMP>=GMP for extensions}, it is not free. In Figure \ref{fig:Free Extensions of (3,5,2,2)}, we present two alternative extensions of $(\CA(B_2),m)$, both of which are free and have non-isomorphic intersection lattices. 
\end{example}

Example \ref{example: free non-yoshinaga extensions} leads to the following conclusion.

\begin{corollary}
    Let $\CE_1,\CE_2$ be free extensions of the multiarrangement $(\CA(B_2),m)$. Then $L(\CE_1)$ and $L(\CE_2)$ need not be isomorphic.
\end{corollary}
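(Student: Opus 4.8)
The plan is to prove this corollary by explicitly exhibiting two free extensions of the same multiarrangement $(\CA(B_2),m)$ whose intersection lattices are non-isomorphic, thereby establishing non-uniqueness. The natural candidate is exactly the pair $\CE_1,\CE_2$ promised in Example~\ref{example: free non-yoshinaga extensions}, so the first step is to make that example rigorous: fix $m=(3,5,2,2)$, compute $\exp(\CA(B_2),m)=(5,7)$ (this is the unbalanced case, so Remark~\ref{Remark: Unbalanced m exponents} applies directly since $m_{\max}=5>\tfrac{|m|}{2}$; wait, here $|m|=12$ and $m_{\max}=5<6$, so $m$ is balanced and one instead reads off $\Delta(m)=2$ from Corollary~\ref{corollary: deltas for m}, giving $\exp=(5,7)$), and record that any extension $\CE$ with $(\CE^H,m^H)=(\CA(B_2),m)$ must satisfy $\VGMP(\CE,H)=5+7+35=47$ by definition.

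Next I would verify the freeness of each proposed extension via the criterion of Lemma~\ref{Lemma: LMP>=GMP for extensions}: a central $3$-arrangement $\CA$ is free with the correct exponents if and only if $\LMP(\CA)=\VGMP(\CA,H)$ for all $H$. Concretely, for each of the two arrangements $\CE_1,\CE_2$ depicted in Figure~\ref{fig:Free Extensions of (3,5,2,2)} I would list the rank-two flats $X\in L_2(\CE_i)$ together with their multiplicities $|(\CE_i)_X|$, compute $\LMP(\CE_i)=\sum_{X\in L_2(\CE_i)}d_1^X d_2^X$, and check that this value equals $47$ for both. Since both are simple arrangements, each localization satisfies $\exp((\CE_i)_X)=(1,|(\CE_i)_X|-1)$, so the locally mixed product reduces to $\sum_{X}(|(\CE_i)_X|-1)$, which is a purely combinatorial count from the figure. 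Establishing $\LMP(\CE_i)=47$ for $i=1,2$ simultaneously confirms freeness and that both are genuine extensions with the required exponents.

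The crux of the argument is then to show $L(\CE_1)\not\cong L(\CE_2)$. For this I would extract the combinatorial invariants of the two intersection lattices that distinguish them; the cleanest such invariant is the multiset $\{\mathcal{P}_i\}$ recording, for each $i$, the number of rank-two flats through exactly $i$ hyperplanes (in the notation $\mathcal{P}_i=|\{X\in L_2\mid |\CE_X|=i\}|$ from Remark~\ref{Remark: Altering the coefficients of chi(A,t)}). Two arrangements with distinct multisets of intersection-point multiplicities cannot have isomorphic intersection lattices, since such an isomorphism would preserve rank and the number of atoms below each rank-two flat. I would read off these multiplicity data from Figure~\ref{fig:Free Extensions of (3,5,2,2)} and confirm the two multisets differ.

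The main obstacle is the last step: I must ensure that the two extensions chosen really do have non-isomorphic lattices and not merely different-looking pictures, since two arrangements can be drawn differently yet share the same matroid. The safest route is to pin down a lattice invariant that is manifestly preserved under isomorphism and manifestly different for the two arrangements --- the $\mathcal{P}_i$-profile is ideal because it is combinatorial, easy to compute from the deconing, and insensitive to the choice of defining forms. A secondary subtlety is the bookkeeping on balancedness: I must take care that $m=(3,5,2,2)$ is balanced (so that the exponent computation proceeds through Corollary~\ref{corollary: deltas for m} rather than Remark~\ref{Remark: Unbalanced m exponents}) and that the $\VGMP$ target value $47$ is computed consistently throughout. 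Once both freeness checks pass and the $\mathcal{P}_i$-profiles are seen to differ, the corollary follows immediately: $\CE_1$ and $\CE_2$ are free extensions of the same $(\CA(B_2),m)$ with $L(\CE_1)\not\cong L(\CE_2)$.
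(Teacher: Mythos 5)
Your overall route is exactly the paper's: the corollary is proved by Example~\ref{example: free non-yoshinaga extensions}, i.e.\ by exhibiting the two free extensions of $(\CA(B_2),(3,5,2,2))$ from Figure~\ref{fig:Free Extensions of (3,5,2,2)}, with freeness certified through $\LMP=\VGMP=47$ via Lemma~\ref{Lemma: LMP>=GMP for extensions}. One step, however, is not justified as you wrote it: Corollary~\ref{corollary: deltas for m} only yields $\Delta(m)\in\{0,2\}$ for a balanced $m$ with $|m|$ even, so it leaves both $\exp(\CA(B_2),m)=(6,6)$ and $(5,7)$ open, and under $(6,6)$ your target value would be $\VGMP=6+6+36=48$, not $47$; the paper simply asserts $(5,7)$ (its classification results, e.g.\ Corollary~\ref{corollary: deletion exponents appendixproof}, require $\min m_i=1$ and do not apply here). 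The fix is cheap and internal: once you have computed $\LMP(\CE_i)=47$ from the figures, the inequality $\LMP(\CE_i)\geq\VGMP(\CE_i,H_0)$ of Lemma~\ref{Lemma: LMP>=GMP for extensions} rules out exponents $(6,6)$, since those would force $\LMP\geq 48$; hence $\exp(\CA(B_2),m)=(5,7)$, and the equality $\LMP=\VGMP=47$ then certifies freeness exactly as you intended.

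A second caveat concerns your non-isomorphism certificate. The $\mathcal{P}_i$-profile is indeed a lattice invariant, but it is not guaranteed to separate the two arrangements: with $|\CE_i|=13$ hyperplanes and $\LMP(\CE_i)=47$, both profiles satisfy the same two linear constraints $\sum_i\binom{i}{2}\mathcal{P}_i=\binom{13}{2}=78$ and $\sum_i(i-1)\mathcal{P}_i=47$, which leave room for distinct non-isomorphic lattices to share a profile. So this step is contingent on what the figures actually show; if the profiles happened to coincide you would need a finer invariant, e.g.\ the multiset, over hyperplanes $H\in\CE_i$, of the multiplicity vectors $\bigl(\vert(\CE_i)_X\vert\bigr)_{X\in L_2(\CE_i),\,X\subset H}$, which a lattice isomorphism must also preserve. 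That said, the paper itself offers nothing beyond the pictures at this point, so with the exponent repair above your proposal is at least as rigorous as the printed proof.
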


\begin{figure}
       \centering
       \includegraphics[width=0.5\linewidth]{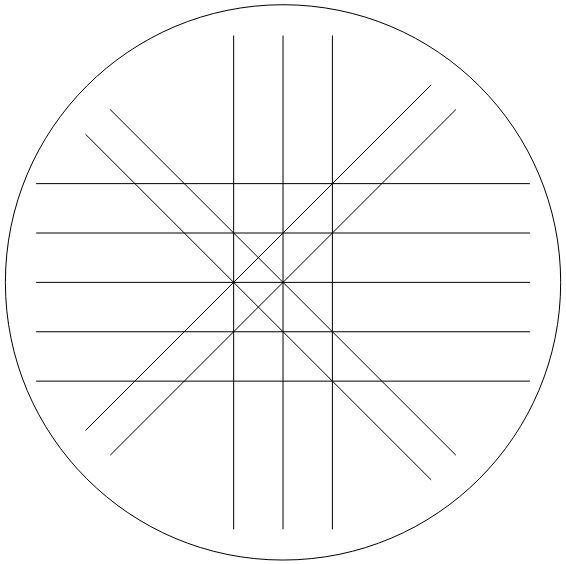}
       \caption{The Yoshinaga-Extension $\CE(\CA(B_2),(3,5,2,2))$.}
       \label{fig:Yoshinaga-Extension of (3,5,2,2)}
   \end{figure}
   
   \begin{figure}
       \centering
       \includegraphics[width=0.4\linewidth]{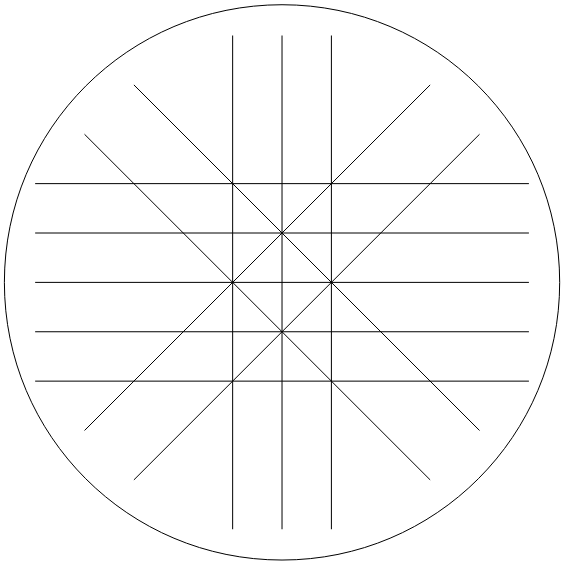}
       \includegraphics[width=0.4\linewidth]{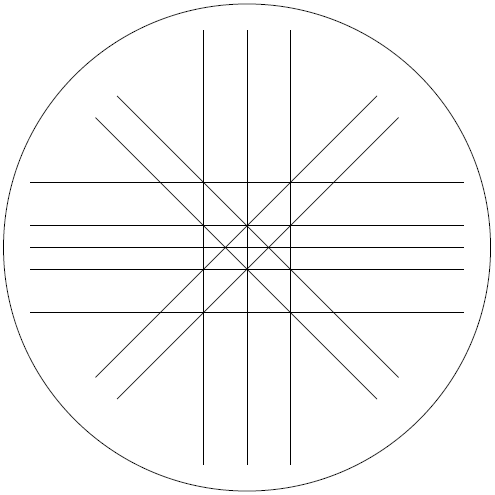}
       \caption{Free extensions of $(\CA(B_2),(3,5,2,2))$ with non-isomorphic lattices of intersection.}
       \label{fig:Free Extensions of (3,5,2,2)}
   \end{figure}

\section{Proof of Theorem \ref{theorem:MainResult}}
We require the following lemma for the proof of Theorem \ref{theorem:MainResult}.

\begin{lemma}\label{lemma: main theorem deletion free}
    Let $k\ge 1$ and $(\CA,m)$ be the multiarrangement over $\BBQ$ defined by $Q(\CA,m) = x^ky^k(x-y)^2$. Then every free extension 
of $(\CA,m)$ is, up to a change of basis, the Yoshinaga extension of $(\CA,m)$.
\end{lemma}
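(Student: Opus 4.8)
The plan is to reduce the freeness of a candidate extension to a purely combinatorial count of ``triple points'' via Lemma~\ref{Lemma: LMP>=GMP for extensions}, and then to show that the maximal such count is attained only by the Yoshinaga configuration, up to the obvious linear symmetries.

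First I would record the exponents. Here $\CA=\CA(A_2)$ carries $m=(k,k,2)$ with $\vert m\vert=2k+2$ even, so the classification of $A_2$-exponents (Remark~\ref{Remark: peak points and A2 classification}, together with Remark~\ref{Remark: Unbalanced m exponents} in the boundary case $k=1$) gives $\exp(\CA,m)=(k+1,k+1)$. Hence any free extension $\CE$ satisfies $\exp(\CE)=(1,k+1,k+1)$ and $\vert\CE\vert=2k+3$. Fixing $H_0\in\CE$ with $(\CE^{H_0},m^{H_0})=(\CA,m)$, I would choose coordinates so that $H_0=\ker z$ and $\CE^{H_0}$ is the arrangement $\ker x,\ker y,\ker(x-y)$ inside $\{z=0\}$. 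Since $\CE_{H_0}=\{H_0\}$, every $H\in\CE\setminus\{H_0\}$ restricts to exactly one of these three flats, and the number restricting to each equals the Ziegler multiplicity $m=(k,k,2)$. Thus the structure of $\CE$ is forced to be
\[
Q(\CE)=z\prod_{i=1}^{k}(x-a_iz)\prod_{j=1}^{k}(y-b_jz)\prod_{l=1}^{2}\big((x-y)-c_lz\big),
\]
with the $a_i$ distinct, the $b_j$ distinct, and $c_1\neq c_2$.

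Next I would translate freeness into a count. Deconing at $z=1$ turns $\CE$ into $k$ vertical lines $x=a_i$, $k$ horizontal lines $y=b_j$, two diagonals $x-y=c_l$, and the line at infinity $H_0$. Lines within one family are parallel, so any rank-two flat off $H_0$ carries at most one line from each family; it is therefore a double point, or a \emph{triple point} occurring precisely when $a_i-b_j=c_l$. Writing $T$ for the number of triple points and using $\LMP(\CE)=\sum_{X\in L_2(\CE)}(\vert\CE_X\vert-1)$ together with the identity $\sum_{X}\binom{\vert\CE_X\vert}{2}=\binom{\vert\CE\vert}{2}$, a short bookkeeping split (the three flats on $H_0$ contribute $k+k+2$ to $\LMP$, and the off-$H_0$ double/triple points account for the rest) yields $\LMP(\CE)=(k+1)(k+3)+(2k-1)-T$. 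Since $\VGMP(\CE,H_0)=(k+1)+(k+1)+(k+1)^2=(k+1)(k+3)$, Lemma~\ref{Lemma: LMP>=GMP for extensions} shows that $\CE$ is free if and only if $\LMP(\CE)=\VGMP(\CE,H_0)$, i.e. if and only if $T=2k-1$.

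The crux, and the step I expect to be the main obstacle, is the rigidity argument showing that $T=2k-1$ pins down the configuration. The number of pairs with $a_i-b_j=c_l$ is at most $k$, with equality exactly when $\{b_j\}=\{a_i\}-c_l$; both $c_1$ and $c_2$ attaining $k$ would force $\{a_i\}$ to be invariant under the nonzero translation $c_2-c_1$, which is impossible over $\BBQ$ for a finite set. Hence $T\leq 2k-1$, with equality forcing one difference, say $c_1$, to be realised $k$ times and the other, $c_2$, exactly $k-1$ times. The first condition gives $\{b_j\}=\{a_i\}-c_1$; after the linear change of basis realising the affine translation (e.g. $x\mapsto x+sz,\ y\mapsto y+tz$) I may assume $c_1=0$ and $\{a_i\}=\{b_j\}=:S$. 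The condition that the nonzero difference $c_2$ occurs $k-1$ times within $S$ forces $S$ to be an arithmetic progression with common difference $c_2$, by the standard fact that the directed graph $s\mapsto s+c_2$ on $S$ is a disjoint union of paths and has $\vert S\vert-1$ edges only when it is a single path. A final equal translation and a uniform scaling ($z\mapsto\mu z$, or $x,y\mapsto\lambda x,\lambda y$) normalise $S=\{0,1,\dots,k-1\}$ and $c_2=1$, which is exactly the Yoshinaga extension $\CE(\CA,m)$ up to the centering translation built into its definition. This establishes that every free extension is, up to change of basis, the Yoshinaga extension.
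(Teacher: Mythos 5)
Your proof is correct, and it takes a genuinely different route from the paper's. The paper proceeds by deletion: it removes the two copies of $\ker(x-y)$ to obtain an extension $\CE_0$ of the Boolean multiarrangement $x^ky^k$ (free with $\exp(\CE_0)=(1,k,k)$), invokes the free path theorem of Abe--Yamaguchi \cite[Thm.~1.2]{2306.11310} to conclude that, after relabelling, $\CE_0\cup\{H_1\}$ is free, reads off $\vert(\CE_0\cup\{H_1\})^{H_1}\vert=k+1$ and $\vert\CE^{H_2}\vert=k+2$, hence exactly $k$ triple points on one diagonal and $k-1$ on the other, and then pins down the configuration by a step-by-step staircase-forcing argument in the deconing (the points $P_1,P_2,P_3,P_4,\dots$ are successively forced, and characteristic zero prevents the walk from revisiting a point). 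You reach the same triple-point counts with no external freeness machinery: your identity $\LMP(\CE)=k^2+6k+2-T$ against $\VGMP(\CE,H_0)=(k+1)(k+3)$ converts freeness into $T=2k-1$ via Lemma~\ref{Lemma: LMP>=GMP for extensions} (equality at the single hyperplane $H_0$ does suffice, since it yields $b_2=d_1d_2$ and hence freeness by Theorem~\ref{theorem: Yoshinaga ChaPol}(2)), and the split $\{T_1,T_2\}=\{k,k-1\}$ then falls out of the elementary bound $T_l\le k$ plus the observation that $T_1=T_2=k$ would make the finite set $\{a_i\}$ invariant under the nonzero translation $c_1-c_2$, impossible in characteristic zero. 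Your path-decomposition argument (the translation graph $s\mapsto s+c_2$ on $S$ is acyclic in characteristic zero, so $k-1$ edges force a single path, i.e.\ an arithmetic progression) then replaces the paper's somewhat informal ``and so forth'' staircase induction. Both proofs use the characteristic-zero hypothesis at the same two junctures (no translation-invariant finite set; no cycles under translation), consistent with Example~\ref{Example: Extension not unique in positive characteristic}, but your version is more self-contained -- it avoids the free path theorem entirely -- and arguably tighter in the rigidity step; your bookkeeping, including the boundary case $k=1$ where $m=(1,1,2)$ is unbalanced with $\exp(\CA,m)=(2,2)$, all checks out.
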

\begin{proof}
Let $\CE$ be a free extension of $(\CA,m)$, and let $K$ be the corresponding field extension of $\BBQ$.
Let $H_0 \in \CE$ such that $(\CE^{H_0},m^{H_0})=(\CA,m)$, and let $\rho \colon \CE\backslash\{H_0\} \rightarrow \CA$ be the associated restriction map. Define $\tilde H_x := \ker x$, $\tilde H_y := \ker y$ and $\tilde H_{x-y} = \ker (x-y)$. There exist $H_1,H_2 \in \CE$ such that $\rho^{-1} (\tilde H_{x-y}) = \{H_1,H_2\}$; we refer to these as the copies of $\tilde H_{x-y}$. Similarly, we define $H_{x,1}, \ldots, H_{x,k} \in \CE$ and $H_{y,1},\ldots, H_{y,k}\in \CE$ as the copies of $\tilde H_x$ and $\tilde H_y$, respectively. 

Note that the subarrangement $\CE_0 = \CE \setminus \{H_1,H_2\}$ is an extension of the Boolean multiarrangement $(\CB,m_0)$, defined by $Q(\CB,m_0) = x^ky^k$. Hence, $\CE_0$ is free with $\exp(\CE_0) = (1,k,k)$. Furthermore, by Remark \ref{Remark: peak points and A2 classification} (2), we have $\exp(\CA,m) = (k+1,k+1)$, and therefore $\exp(\CE) = (1,k+1,k+1)$. 
By \cite[Thm.~1.2]{2306.11310}, there exists a free path between $\CE_0$ and $\CE$. We may assume without loss of generality that $\CE_0 \cup \{H_1\}$ is free (otherwise we relabel $H_1$ and $H_2$).
Thus, $\vert (\CE_0\cup \{H_1\})^{H_1} \vert = k+1$ and $\vert \CE^{H_2} \vert = k+2$.

We call $X \in L_2(\CE)$ a triple point if $X \not\subset H_0$ and $\vert\CE_X\vert = 3$. Based on the restriction sizes above, there are exactly $k$ triple points $P_1$ with $H_1 \in \CE_{P_1}$, and
exactly $k-1$ triple points $P_2$ with $H_2 \in \CE_{P_2}$. The triple points of $H_1$ contain all copies of $\tilde H_x$ and $\tilde H_y$, i.e., 
$$
U_1 := \{H \in \CE_P \mid P \text{ is a triple point of } H_1 \} = \{\text{copies of } \tilde H_x \} \cup \{\text{copies of } \tilde H_y \} \cup \{H_1\}.
$$
For $H_2$, define 
$$
U_2 := \{H \in \CE_P \mid P \text{ is a triple point of } H_2 \}. 
$$
Then exactly one copy of $H_x$ and one copy of $H_y$ are missing from $U_2$. For each triple point $P$
on $H_2$, there exists a unique triple point $P_x$ on $H_1$ such that $(\CE_P \cap \CE_{P_x}) = \{\text{copy of } \tilde H_x\}$, and similarly a unique triple point $P_y$ on $H_1$ such that 
$(\CE_{P} \cap \CE_{P_y}) = \{\text{copy of } \tilde H_y\}$. 

Choose a triple point $P_2$ on $H_2$ and the unique corresponding point $P_1$ on $H_1$. After a suitable base change, we may assume that $H_0 := \ker z$, and that 
$H_{x,i} = x - \lambda_i z$ and $H_{y,i} = y - \tilde \lambda_i z$ for $i=1,\ldots,k$, with suitable $\lambda_i,\tilde \lambda_i \in K$. We now apply the deconing construction (setting $z=1$) to visualize the intersection pattern. Now $P_1,P_2$ can be viewed as points in a plane. After a translation we may assume $P_1 =(0,0)$ and after scaling the $z$-component we furthermore may assume that $P_2 = (1,0)$. Both operations are base changes.

The initial intersection pattern is illustrated below.

\begin{center}
\begin{tikzpicture}
    \draw[dashed] (0,-4) -- (0,4); \node[draw=none] at (0,4.5) {$x=0$};
    \draw[dashed] (2,-4) -- (2,4); \node[draw=none] at (2,4.5) {$x=1$};
    \draw[dashed] (-2,-2) -- (4,4); \node[draw=none] at (-2.5,-2) {$H_1$};
    \draw[dashed] (-2,-4) -- (4,2); \node[draw=none] at (-2.5,-4) {$H_2$};
    \draw[dashed] (-2,0) -- (4,0); \node[draw=none] at (5,0) {$y=0$};
    \draw[dashed] (-2,2) -- (4,2); \node[draw=none] at (5,2) {$y=1$};
    \draw[draw,-triangle 90] (0,0) -- (2,0); \node[draw=none] at (5,0)  {};
    \draw[draw,-triangle 90] (2,0) -- (2,2); \node[draw=none] at (5,0)  {};
    \node[draw=none] at (-0.3,0.3) {$P_1$};
    \node[draw=none] at (1.6,0.3) {$P_2$};
    \node[draw=none,gray] at (1.6,2.3) {$P_3$};
    \node[draw=none,red] at (3.6,2.3) {$P_4$};
    \draw[draw,-triangle 90,red] (2,2) -- (4,2); \node[draw=none] at (5,0)  {};
\end{tikzpicture}
\end{center}

All dashed lines correspond to hyperplanes in $\CE$. Clearly, the next triple point must be $P_3 = (1,1)$. If $k\geq 3$, further triple points are required; the next one on $H_2$ is $P_4 = (2,1)$, implying that the copy $x = 2$ (namely $\ker (x-2z)$) of $\tilde H_x$ is in $\CE$, and so forth.

So in each double step (up, right) we increase the point by $(1,1)$ on $H_2$. Similarly, backtracking to the source (left, down) we decrease the point by $(1,1)$. Therefore, it is not possible to revisit a point in characteristic zero.
\end{proof}

Although it is not possible to revisit a certain point in the proof of Theorem \ref{theorem:MainResult} when working over a field of characteristic zero, this becomes a feasibility in positive characteristic, as illustrated by the following example.

\begin{example}\label{Example: Extension not unique in positive characteristic}
Let $K$ be the finite field with $\vert K \vert = 9$, and let $w \in K \setminus \{0,1,2\}$. Furthermore, let $(\CA,m)$ be the multiarrangement defined by $Q(\CA,m) = x^4y^4(x-y)^2$ over $K$.
Then the arrangement $\CE$ over $K$ defined by $$Q(\CE) = z (x-y)(x-y+z)x(x+z)(x+2z)(x+wz) y(y+z)(y+2z)(y+wz)$$ is a free extension of $(\CA,m)$.
\end{example}

\begin{figure}
\begin{tikzpicture}%\hspace{0mm}
 \fill [black] ( -2, -2) circle [radius=0.05]node[above  right]{}; 
 \fill [black] (-1,-3) circle [radius=0.05]node[below  left]{$(0,0)$}; origin
 \fill [black] ( 0,-3) circle [radius=0.05]node[below  left]{}; 
 \fill [black] (-1,-2) circle [radius=0.05]node[below  left]{}; 

  \fill [black] (-3, -1) circle [radius=0.05]node[above  left]{}; 
  \fill [black] (-1, 0) circle [radius=0.05]node[above  right]{$(0,3)$}; 
  \fill [black] (-1,-1) circle [radius=0.05]node[above  left]{}; 
  \fill [black] ( 0,-1) circle [radius=0.05]node[above  right]{$(1,2)$}; 
  \fill [black] ( 0,-2) circle [radius=0.05]node[above  left]{}; 
  
  \fill [black] (-2, 0) circle [radius=0.05]node[above  left]{}; 
  \fill [black] (-2,-1) circle [radius=0.05]node[above  left]{}; 
  \fill [black] (-3, 0) circle [radius=0.05]node[above  left]{}; 

  \fill [black] ( -4,0) circle [radius=0.05]node[above  left]{}; 
  \fill [black] ( 1,-2) circle [radius=0.05]node[above  left]{}; 
  \fill [black] ( 1,-3) circle [radius=0.05]node[above  left]{}; 
  \fill [black] ( 2,-3) circle [radius=0.05]node[above  left]{}; 

  \draw[domain=-4:2] plot(0,\x)node[above right]{$\ker(x-z)$}; %x=-1
  \draw[domain=-4:2] plot(-1,\x)node[above]{$\ker(x)$}; %x=0
  
  \draw[domain=-4.5:3] plot(\x,0)node[right]{$\ker(y-3z)$}; %y=-3
  \draw[domain=-4.5:3] plot(\x,-1)node[right]{$\ker(y-2z)$}; %y=-2
  \draw[domain=-4.5:4] plot(\x,-2)node[right]{}; %y=-2
  \draw[domain=-4.5:4] plot(\x,-3)node[right]{}; %y=-3
  
  \draw[domain=0:-4.5] plot(\x,-\x-4)node[above]{$\ker(x+y)$}; %y=-x-4
  \draw[domain=-3:3] plot(\x,-\x-1)node[right]{}; %y=-x-3
  \draw[domain=-3.5:2] plot(\x,-\x-2)node[right]{}; %y=-x-2
  \draw[domain=1:-3.5] plot(\x,-\x-3)node[above left]{}; %y=-x-1

  \draw (0,-1) circle[radius=6]node[above]{};% z=0
  \draw (0,4.2)node[below ]{$\ker(z)$}; 
\end{tikzpicture}
\caption{Deconing ($z=1$) of free extension of $(\CA(A_2),(2,4,4))$.}\label{fig: free extension of A_2}
\end{figure}
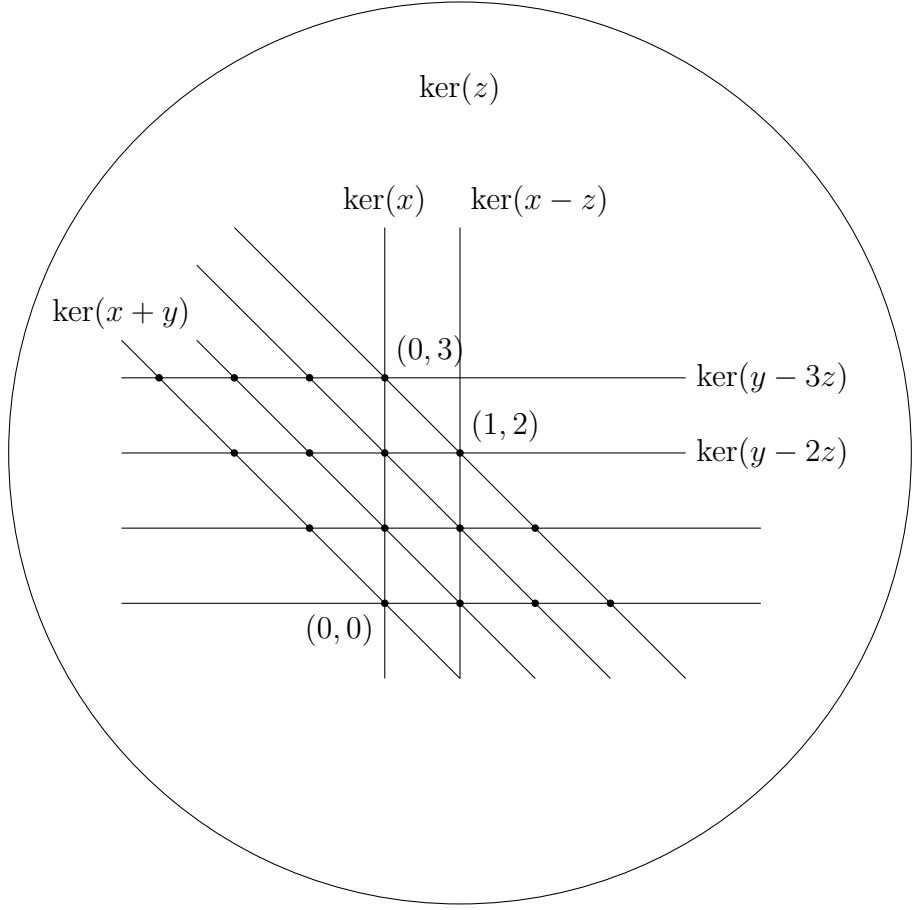

Given an extension $\CE$ of $(\CA(B_2),m)$, the following results provide upper and lower bounds for $\vert \CE^H\vert$ for any hyperplane $H\in\CE\backslash\{H_0\}$.

\begin{lemma}\label{lemma: hyperplane cardinality of extension restriction}
    Let $\ell\geq 2$ and $\CA$ be an $\ell$-arrangement. Let $m$ be a multiplicity on $\CA$, and let $\CE$ be an extension of $(\CA,m)$; that is, there exists a hyperplane $H_0\in\CE$ such that the Ziegler restriction to $H_0$ is $(\CA,m)$. Let $ H_1\in\CE\backslash\{H_0\}$ be an arbitrary hyperplane, and define $m_1:=\max\{m(H\cap H_0)\mid H\in\CE\backslash\{H_0,H_1\}\}$, then $\vert\CE^{H_1}\vert\geq 1+m_1$. 
\end{lemma}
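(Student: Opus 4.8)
The plan is to translate the quantity $\vert\CE^{H_1}\vert$ into a purely combinatorial count of flats and then exhibit $1+m_1$ of them arising from a single pencil. Since $\CE$ is central, every hyperplane of the restriction $\CE^{H_1}$ has the form $H_1\cap H$ for some $H\in\CE\setminus\{H_1\}$, and each such intersection is a rank-two flat of $\CE$ contained in $H_1$; conversely, every rank-two flat $Y\subseteq H_1$ equals $H_1\cap H$ for a suitable $H\in\CE_Y\setminus\{H_1\}$ (pick any hyperplane of the localization $\CE_Y$ different from $H_1$, noting $\vert\CE_Y\vert\geq 2$). Hence $\vert\CE^{H_1}\vert$ equals the number of rank-two flats of $\CE$ lying in $H_1$, and it suffices to produce $1+m_1$ distinct such flats.

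First I would fix a hyperplane $H^{*}\in\CE\setminus\{H_0,H_1\}$ attaining the maximum, so that $m(H^{*}\cap H_0)=m_1$, and set $X:=H^{*}\cap H_0\in L_2(\CE)$. By the defining property of the Ziegler restriction $(\CE^{H_0},m^{H_0})=(\CA,m)$, we have $m(X)=\vert\CE_X\vert-1$, so the localization $\CE_X$ is a pencil of exactly $m_1+1$ hyperplanes all meeting in $X$; write $\CE_X=\{G_0,\dots,G_{m_1}\}$ with $G_0=H_0$. Any two of these meet precisely in $X$, since for $i\neq j$ the flat $G_i\cap G_j$ has rank two and contains the rank-two flat $X$. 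The main step is to restrict this pencil to $H_1$ under the \emph{transversality} assumption $X\not\subseteq H_1$. Then no $G_i$ equals $H_1$ (each $G_i$ contains $X$), so every $G_i\cap H_1$ is a genuine rank-two flat inside $H_1$. These $m_1+1$ flats are pairwise distinct: if $G_i\cap H_1=G_j\cap H_1$ for some $i\neq j$, the common flat would lie in $G_i\cap G_j=X$ and in $H_1$, hence in $X\cap H_1$; but $X\not\subseteq H_1$ forces $\dim(X\cap H_1)=\dim X-1$, so $X\cap H_1$ has rank three and cannot contain a rank-two flat, a contradiction. Therefore the pencil $\CE_X$ already contributes $m_1+1$ distinct rank-two flats to $H_1$, giving $\vert\CE^{H_1}\vert\geq \vert\CE_X^{H_1}\vert=m_1+1=1+m_1$.

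The step I expect to be the main obstacle is the degenerate configuration $X\subseteq H_1$, i.e. $X=H_0\cap H_1$: here the entire pencil $\CE_X$ collapses to the single flat $X$ after restriction to $H_1$, and the rank count above breaks down. The remedy is to choose the maximizer $H^{*}$ so that its restricted flat $H^{*}\cap H_0$ is \emph{different} from $H_0\cap H_1$; this is available precisely when the maximal multiplicity $m_1$ is realized at some flat other than $H_0\cap H_1$, so that the transversal case $X\not\subseteq H_1$ applies. In the setting relevant to Theorem \ref{theorem:MainResult}, the arrangement $\CA(B_2)$ with $m_k=(2,k,1,k)$ (for $k\geq 4$) carries two distinct flats of maximal multiplicity $k$, whence a maximizer off $H_0\cap H_1$ can always be selected and the argument goes through. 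Thus the crux is securing transversality of the chosen pencil to $H_1$; once that is in place, the distinctness argument via the rank of $X\cap H_1$ is immediate.
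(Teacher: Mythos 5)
Your proof is essentially the paper's own argument: the paper likewise fixes a hyperplane $H\in\CA$ with $m(H)=m_1$, forms the set $C_H$ of its $m_1$ copies in $\CE$ (your pencil $\CE_X\setminus\{H_0\}$, using the Ziegler identity $m(X)=\vert\CE_X\vert-1$), and counts the flats $H_1\cap H'$ for $H'\in C_H$ together with $H_0\cap H_1$ as $1+m_1$ distinct elements of $\CE^{H_1}$; your distinctness argument via $\dim(X\cap H_1)=\dim X-1$ is just the coordinate-free form of the paper's observation that two copies of $H$ meet exactly in $X$. The one place you go beyond the paper is the degenerate case $X=H_0\cap H_1$ (equivalently $H_1\in\CE_X$), which you correctly flag as the point where the pencil collapses: the paper's proof tacitly assumes this does not occur, since its assertion $\CE_{(H_0\cap H_1)}\cap\CE_{(H_0\cap H')}=\emptyset$ (read literally this is false anyway, as $H_0$ lies in both localizations; what is meant is that they share no hyperplane besides $H_0$) presupposes that the flats $H_0\cap H_1$ and $H_0\cap H'$ are distinct --- precisely your transversality hypothesis. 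Your caveat is genuinely needed, not cosmetic: when $H_0\cap H_1$ is the unique flat attaining $m_1$ and $m(H_0\cap H_1)\geq 2$ (so that excluding the hyperplane $H_1$ does not exclude its flat from the maximum), the stated bound can fail, e.g.\ for $\CE=\{\ker z,\ker x,\ker(x-z),\dots,\ker(x-(M-1)z),\ker y\}$ with $H_0=\ker z$ and $H_1=\ker x$ one has $m_1=M$ but $\vert\CE^{H_1}\vert=2$. So your restriction to maximizers with $H^*\cap H_0\neq H_0\cap H_1$ is a correct sharpening rather than a gap relative to the paper, and, as you observe, it is harmless where the lemma is actually applied: for $m_k=(2,k,1,k)$ two distinct flats carry the maximal multiplicity $k$, so a transversal pencil is always available (this is exactly how the bound enters Remark~\ref{remark: using corollary for nonfreeness}).
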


\begin{proof}
 Let $H_0, H_1, \CE$ be as in Lemma \ref{lemma: hyperplane cardinality of extension restriction}, and let $H\in\CA$ such that $m(H)=m_1$. Define the set $C_H:=\{H'\in\CE \mid (H'\cap H_0) = H\}\subset\CE,$ which consists of $m_1$ hyperplanes. Since $\CE^{H_0}=\CA$, it follows that for any $H'\in C_H$ that $\CE_{(H_0 \cap H_1)} \cap \CE_{(H_0 \cap H')}=\emptyset$, as otherwise the hyperplanes would get identified in $\CA$. Furthermore, for any $H'\in C_H$, we have $\CE_{(H_0\cap H')}=C_H\cup\{H_0\}$. In particular, this implies that $\CE_{(H_1\cap H')}\cap C_H=\emptyset$. Therefore, it holds that $(\{H_1\cap H'\mid H'\in C_H\}\cup\{H_0\cap H_1\})\subset \CE^{H_1}$, which completes the proof.
\end{proof}

Combining Corollary \ref{corollary: upper bound for restriction hyperplanes} and Lemma \ref{lemma: hyperplane cardinality of extension restriction}, we obtain upper and lower bounds for $\vert\CE^H\vert$, where $\CE$ is a free extension $\CE$.

\begin{corollary}\label{corollary: restriction on amount of hyperplanes for extension of B2}
    Let $\CE$ be a free extension of $(\CA,m)=(\CA(B_2),m)$, where $m$ is a balanced multiplicity with $r:=\vert m\vert$, and suppose $(\CE^{\ker(z)},m^{\ker(z)})=(\CA,m)$. Let $H_1\in\CE\backslash\{\ker(z)\}$ be arbitrary, $\exp(\CA,m)=(e_1,e_2), \exp(\CA',m')=(d_1,d_2)$, and $m_1:=\max\{m(H\cap \ker(z))\mid H\in\CE\backslash\{\ker(z),H_1\}\}$. Then:
    \begin{enumerate}
        \item If $r$ is odd, then: 
        \begin{enumerate}
            \item $1+m_1\leq \vert \CE^{H_1}\vert\leq \frac{r+3}{2}$ (if $\vert d_1-d_2\vert=2$).
            \item $1+m_1\leq \vert \CE^{H_1}\vert\leq \frac{r+1}{2}$.
        \end{enumerate} 
        \item If $r$ is even, then: 
        \begin{enumerate}
            \item $1+m_1\leq \vert\CE^{H_1}\vert\leq \frac{r}{2}$ (if $\vert e_1-e_2\vert=2$).
            \item $1+m_1 \leq \vert\CE^{H_1}\vert\leq \frac{r+2}{2}$.
        \end{enumerate} 
    \end{enumerate}
\end{corollary}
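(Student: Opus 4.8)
The statement combines two earlier results, so the proof is essentially an assembly of Corollary~\ref{corollary: upper bound for restriction hyperplanes} (for the upper bounds) and Lemma~\ref{lemma: hyperplane cardinality of extension restriction} (for the lower bound), applied to the present situation where $H_0=\ker(z)$. First I would record that since $\CE$ is a free extension of $(\CA(B_2),m)$ and $m$ is balanced, every hyperplane $H_1\in\CE\setminus\{\ker(z)\}$ satisfies both a lower and an upper cardinality constraint on its restriction $\CE^{H_1}$, and the claim is simply the conjunction of these two for each parity and each value of $\Delta$.

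\textbf{Lower bound.}
For the inequality $1+m_1\leq\vert\CE^{H_1}\vert$, I would invoke Lemma~\ref{lemma: hyperplane cardinality of extension restriction} directly. That lemma is stated for an arbitrary $\ell$-arrangement $\CA$ with $\ell\geq 2$ and any extension $\CE$ of $(\CA,m)$, and it gives exactly $\vert\CE^{H_1}\vert\geq 1+m_1$ with $m_1=\max\{m(H\cap H_0)\mid H\in\CE\setminus\{H_0,H_1\}\}$. Since $\CA(B_2)$ is a central $2$-arrangement and our $H_0=\ker(z)$, the hypotheses are met verbatim, and the lower bound in every one of the four cases $(1)(a),(1)(b),(2)(a),(2)(b)$ is immediate with no case distinction needed. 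This part carries no real difficulty; it is a matter of citing the lemma and matching notation.

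\textbf{Upper bounds.}
For the upper bounds I would appeal to Corollary~\ref{corollary: upper bound for restriction hyperplanes}, which already establishes, for a free extension $\CE$ of $(\CA(B_2),m)$ with balanced $m$, that $\vert\CE^{H}\vert\leq\frac{r+3}{2}$ or $\frac{r+1}{2}$ when $r=\vert m\vert$ is odd (according to whether $\Delta(m')=2$ or $\Delta(m')=0$), and $\vert\CE^{H}\vert\leq\frac{r}{2}$ or $\frac{r+2}{2}$ when $r$ is even (according to whether $\Delta(m)=2$ or $\Delta(m)=0$). The only genuine bookkeeping step is to reconcile the exponent notation: Corollary~\ref{corollary: upper bound for restriction hyperplanes} splits the odd case by $\Delta(m')=\vert d_1-d_2\vert$ (the deletion exponents) and the even case by $\Delta(m)=\vert e_1-e_2\vert$ (the exponents of $(\CA,m)$ itself), which is exactly how the statement here distinguishes subcases $(a)$ and $(b)$. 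I would note that the even case is governed by $\Delta(m)$ because deleting a hyperplane from a balanced even-weight multiplicity leaves a unique balanced deletion with $\exp(\CA',m')=(\frac{r-2}{2},\frac{r}{2})$ (as computed in the proof of Corollary~\ref{corollary: upper bound for restriction hyperplanes}), so the surviving distinction is the parity-preserving datum $\Delta(m)$; in the odd case, the deletion may tip into either of the two balanced possibilities, whence the relevant invariant is $\Delta(m')$.

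\textbf{Main obstacle.}
There is no serious mathematical obstacle, since both contributing results are already proved. The one point requiring care is purely organizational: ensuring that the subcase labels $(a)/(b)$ in the present corollary line up correctly with the $\Delta$-conditions in Corollary~\ref{corollary: upper bound for restriction hyperplanes}, and in particular that the ``default'' unlabeled cases $(1)(b)$ and $(2)(b)$ correspond to $\Delta=0$. I would close by remarking, as in Corollary~\ref{corollary: upper bound for restriction hyperplanes}, that equality in the upper bound is equivalent to freeness of the deletion $\CE'=\CE\setminus\{H_1\}$, so the displayed inequalities are the sharp ones.
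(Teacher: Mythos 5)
Your proposal is correct and matches the paper exactly: the paper states this corollary without a separate proof, deriving it precisely as you do by combining the lower bound from Lemma~\ref{lemma: hyperplane cardinality of extension restriction} (applied with $H_0=\ker(z)$) with the four upper bounds from Corollary~\ref{corollary: upper bound for restriction hyperplanes}, including the same $\Delta(m')$-versus-$\Delta(m)$ bookkeeping for the odd and even cases. Your closing remark on equality characterizing freeness of the deletion $\CE'$ is likewise consistent with the earlier corollary, so nothing is missing.
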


\begin{remark}\label{remark: using corollary for nonfreeness}
Corollary \ref{corollary: restriction on amount of hyperplanes for extension of B2} can be employed to demonstrate that certain multiarrangements $(\CA(B_2),m)$ do not admit a free extension $\CE$, by constraining the possible values of $\vert\CE^H\vert$ in attempts to disprove the existence of such an extension. With the same notation as in Corollary \ref{corollary: restriction on amount of hyperplanes for extension of B2}, consider the multiarrangement $$(\CA,m)=(\CA(B_2),(2,k,1,k)), (k\geq 4),$$ and assume $\CE$ is a free extension of $(\CA,m)$. Using Corollary \ref{corollary: deletion exponents appendixproof}, we observe that removing any hyperplane $H\in\CA\backslash\{\ker(z)\}$ does not create a peak point for $\CA(B_2)$, while Corollary \ref{corollary: restriction on amount of hyperplanes for extension of B2} (1) gives $k+1\leq\vert\CE^H\vert\leq k+2$. From Corollary \ref{corollary: deltas for m}, we find $\exp(B_2,m)=(k+1,k+2)$, hence by Theorem \ref{theorem: Yoshinaga ChaPol}, $\exp(\CE)=(1,k+1,k+2)$. Since $\rank(\CE^H)=2$, we have $\exp(\CE^H)=(1,\vert\CE^H\vert-1)$. Applying the Addition-Deletion-Theorem \cite{Add-Del-Terao}, we are left with two possibilities.
\begin{enumerate}
        \item $\vert\CE^H\vert= k+2\iff \CE'$ is free.
        \item $\vert\CE^H\vert= k+1\iff \CE'$ is not free.
    \end{enumerate}
    A case-by-case analysis (based on whether specific deletions $\CE'$ of $\CE$ are free) shows that no such extension $\CE$ exists which satisfies the inequality $k+1\leq\vert\CE^H\vert\leq k+2$ for every hyperplane $H\in\CE\backslash\{\ker(z)\}$. Note that the assumption $k\geq 4$ is crucial here. If we instead assume $k=3$, then one of the deletions is a peak point, as shown in Example \ref{example: k equal to 3 is peakpoint}. In that case, the alternative part of Corollary \ref{corollary: restriction on amount of hyperplanes for extension of B2} (1) must be applied.
\end{remark}

We are now ready to prove our main result.
\begin{proof}[Proof of Theorem \ref{theorem:MainResult}]
    We continue to use the notation introduced in the proof of Lemma \ref{lemma: main theorem deletion free}. Let $\CE$ be a free extension of $(\CA(B_2),(2,k,1,k))$ with $k\geq 4$. First, suppose that the deletion $\CE':=\CE\backslash\{H\}$, where $H$ denotes the unique copy of $\tilde H_{x-y}$, is free. Then $\CE'$ is a free extension of $(\CA(A_2),(2,k,k))$, and by Lemma \ref{lemma: main theorem deletion free}, we may assume that $\CE'=\CE(\CA(A_2),(2,k,k))$ is the Yoshinaga-Extension. As in Lemma \ref{lemma: main theorem deletion free}, we employ the deconing construction (i.e. set $z=1$) to visualize the corresponding arrrangement. After a suitable change of basis, we may assume that the intersection points of the first copy $H_1\in\CE'$ of $\tilde H_x$ with the copies of $\tilde H_y$ are given by $I_1=\{(0,i)\mid 0\leq i\leq k-1\},$ and those of the second copy $H_2\in\CE'$ by $I_2=\{(1,i)\mid 0\leq i\leq k-1\}$. Since $\CE'$ is the Yoshinaga-Extension, $k-1$ copies of $\tilde H_{x+y}$ each contain two triple points of $\CE'$, and the remaining one contains a single triple point. These $k-1$ copies each contain a pair of points $(0,i)\in I_1$ and $(1,i-1)\in I_2$, while the final hyperplane contains either $(0,0)\in I_1$ or $(1,k-1)\in I_2$. The corresponding configuration for $k=4$ is illustrated in Figure \ref{fig: free extension of A_2}.\\
    With our chosen coordinates, the copies of $\tilde H_y$ are given by the hyperplanes $\ker(y-a_iz)$ for $0\leq a_i\leq k-1$, and the copies of $\tilde H_{x+y}$ are given by $\ker(x+y-b_jz)$ for $0\leq b_j\leq k-1$. If the intersection point of $H$ with $\ker(y)$ is $(x_1,y_1)$, then the intersection point of $H$ with $\ker(y-a_iz)$ is $(x_1+a_i,y_1+a_i)$. On the other hand, if the intersection point of $H$ with $\ker(x+y)$ is $(x_2,y_2)$, then the intersection point of $H$ with $\ker(x+y-b_jz)$ is $(x_2+\frac{b_j}{2},y_2+\frac{b_j}{2})$.\\
    This shows that at least half of the intersection points of $H$ with copies of $\tilde H_y$ cannot lie on any copy of $\tilde H_{x+y}$. Since $\CE$ also includes the intersection of $H$ with $\ker(z)$, and $k\geq 4$, we obtain $$\vert\CE^H\vert\geq k+1+\left\lfloor\frac{k}{2}\right\rfloor\geq k+3,$$ which contradicts Remark \ref{remark: using corollary for nonfreeness}, where it is shown that we must have $\vert\CE^H\vert=k+2$.
    
    \noindent Now suppose instead that $\CE'$ is not free. Then $\vert\CE^H\vert=k+1$, which means that $\CE^H$ contains the intersection point $(H\cap\ker(z))$ as well as the $k$ intersections of $H$ with the copies of $\tilde H_y$. Hence, each intersection point on $H$ is a triple intersection point formed with one copy each of $\tilde H_y$ and $\tilde H_{x+y}$ from the Boolean extension. 

From the fact that $\vert\CE^H\vert=k+1$, it follows that $\vert \CE^{H'} \vert = k+2$ for $H' \in \{H_1,H_2\}$, where $H_1$ and $H_2$ denote the two copies of $\tilde H_x$. This implies that each of these hyperplanes has exactly $k-1$ triple and $2$ double intersection points with the Boolean extension. One of these double points arises from an intersection with a single copy of $\tilde H_y$, and the other one from an intersection with a single copy of $\tilde H_{x+y}$.
 
Since our focus in on the intersection points with the Boolean extension, we may again work in the deconing (i.e., set $z=1$). After a change of basis, we may assume that the special intersection points of $H$ with $H_1$ and $H_2$ are $(0,0)$ and $(1,1)$ in the deconing (see Figure \ref{fig1}). In particular, the copies of $\tilde H_x$ in $\CE$ are given by $\ker(x)$ and $\ker(x-z)$.

\begin{figure}
\centering
\begin{minipage}[t]{.4\textwidth}
\begin{tikzpicture}[scale=1.5]
    \draw[] (-1.5,0) -- (2.5,0);  
    \draw[] (-1.5,1) -- (2.5,1);  

    \draw[] (0,-1.5) -- (0,2.5);  \node[draw=none] at (0,2.65) {$H_1$} ; 
    \draw[] (1,-1.5) -- (1,2.5);  \node[draw=none] at (1,2.65) {$H_2$} ; 
    \node[draw=none] at (-0.1,-0.3) {$0$};

    \draw[] (-1.5,1.5) -- (1.5,-1.5);
    \draw[] (-0.5,2.5) -- (2.5,-0.5);

    \draw[draw] (-1.5,-1.5) -- (2.5,2.5); \node[draw=none] at (2.5,2.65) {$H$} ; 
\end{tikzpicture}\caption{}\label{fig1}
\end{minipage}
\begin{minipage}[t]{.4\textwidth}
\begin{tikzpicture}[scale=1.5]
    \draw[] (-1.5,0) -- (2.5,0);  
    \draw[] (-1.5,1) -- (2.5,1);  

    \draw[] (0,-1.5) -- (0,2.5);  \node[draw=none] at (0,2.65) {$H_1$} ; 
    \draw[] (1,-1.5) -- (1,2.5);  \node[draw=none] at (1,2.65) {$H_2$} ; 
    \node[draw=none] at (-0.1,-0.3) {$0$};

    \draw[] (-1.5,1.5) -- (1.5,-1.5);
    \draw[] (-0.5,2.5) -- (2.5,-0.5);

    \draw[draw] (-1.5,-1.5) -- (2.5,2.5); \node[draw=none] at (2.5,2.65) {$H$} ; 
    
    \draw[draw, -triangle 90, green,line width=1.2pt] (0,1) -- (1,1);
    \draw[draw, -triangle 90, green,line width=1.2pt] (1,1) -- (0,2);
    \draw[draw, -triangle 90, green] (0,0.5) -- (0.5,0.5);
    \draw[draw, -triangle 90, green] (0.5,0.5) -- (0,1);
    \draw[draw, -triangle 90, green] (0,0.25) -- (0.25,0.25);
    \draw[draw, -triangle 90, green] (0.25,0.25) -- (0,0.5);

    \draw[draw,dashed] (-1.5,0.25) -- (0,0.25);  
    \draw[draw,dashed] (0.25,0.25) -- (2.5,0.25);  

    \draw[draw,dashed] (-1.5,0.5) -- (0,0.5);  
    \draw[draw,dashed] (0.5,0.5) -- (2.5,0.5);      

    \draw[draw,dashed] (-1.5,2) -- (0,0.5);  
    \draw[draw,dashed] (0,0.5) -- (2,-1.5);      

    \draw[draw,dashed] (-1.5,2.5) -- (0,1);  
    \draw[draw,dashed] (0.5,0.5) -- (2.5,-1.5);

    \draw[draw, -triangle 90, red,line width=1.2pt] (1,0) -- (0,0);
    \draw[draw, -triangle 90, red,line width=1.2pt] (0,0) -- (1,-1);
\end{tikzpicture}\caption{}\label{fig2}
\end{minipage}
\end{figure}

Observe that the intersection points $(0,1)$ and $(0,2)$ on $H_1$ are connected via the intersection point $(1,1)$ on $H$, where we consider a path that traverses portions of $H$ and a copy of $\tilde H_x$, rather than remaining entirely on $H_1$.
Similarly, the intersection points $(1,0)$ and $(1,-1)$ on $H_2$ are connected via the intersection point $(0,0)$ on $H$.
This observation motivates the definition of the following graphs $G(H')$ for $H' \in \{H_1, H_2\}$.

The vertices of $G(H')$ are the intersection points on $H'$ and the intersection points on $H$. Recall that the elements of $\CE^H$ are the intersections of $H$ with the $k$ copies of $\tilde H_y$ (each of which also lies in a copy of $\tilde H_{x+y}$), along with the intersection point of $H$ with $\ker(z)$. Since $\CA$ is a central arrangement, for each intersection point $P$ of $H$ with a copy $H_{C_y}$ of $\tilde H_y$, there exists a corresponding intersection point $P_{-}$ of $H'$ with $H_{C_y}$, such that $P$ and $P_{-}$ are connected via $H_{C_y}$.
Moreover, since $P$ lies on a copy $H_{C_{x+y}}$ of $\tilde H_{x+y}$, there also exists an intersection point $P_{+}$ on $H'$ such that $P$ and $P_{+}$ are connected via $H_{C_{x+y}}$.
For every such triple $(P_{-},P,P_+)$ we add directed edges $P_{-} \rightarrow P$ and $P \rightarrow P_+$ in the graph $G(H')$.
It is important to note that all sources and sinks of this graph must lie on $H'$, since every intersection point on $H$ lies on in both a copy of $\tilde H_y$ and a copy of $\tilde H_{x+y}$.
Consequently, a sink in $G(H')$ corresponds to an intersection point on $H'$ not intersected by any copy of $\tilde H_y$, while a source corresponds to an intersection point on $H'$ not intersected by any copy of $\tilde H_{x+y}$.

We begin by analyzing the structure of $G(H_1)$. Let $p = (0,c)$ be an intersection point on $H_1$, where $c$ may lie in an extension field of $\BBQ$. The intersection points on $H_1$ which can lie in the same connected components of $G(H_1)$ as $p$ are given by $p_k = (0,2^kc)$ for $k \in \BBZ$. Thus, the graph $G(H_1)$ contains only one single cycle, namely the loop at $(0,0)$. All other connected component of $G(H_1)$ must contain both a source and a sink. Recall that there are exactly two double intersection points with the Boolean arrangement on $H_1$, while every other intersection point on $H_1$ is a triple intersection point. Therefore, one of the double points must serve as the sink and the other one as the source of the single remaining connected component of $G(H_1)$. In particular, this component must contain the points $(0,1)$ and $(0,2)$. 

The argument involving $G(H_2)$ is similar. Let $p = (1,c)$ be an intersection point on $H_2$, where $c$ might be an element of an extension field of $\BBQ$. Then the intersection points on $H_2$ that can lie in the same connected component as $p$ are $p_k = (1,2^kc-(2^k-1))$ for $k \in \BBZ$. Consequently, $G(H_2)$ also contains one single cycle, which is the loop at $(1,1)$, and thus it, too, has two connected components. The second component must contain the points $(1,0)$ and $(1,-1)$.

Next, observe that $(0,2)$ must be a sink in $G(H_1)$; otherwise $(1,2)$ would introduce another connected component on $G(H_2)$.
From this, it follows that the structure of $G(H_1)$ is fully determined, as all remaining edges must be predecessors of $(0,0)$ on $G(H_1)$. We illustrate the information contained in the next sentences in Figure \ref{fig2}. Now, consider the point $(0,\frac{1}{2})$, which must be a triple point on
$H_1$, as there must be at least three triple points on $H_1$. This implies that $(0,\frac{1}{4})$ is another intersection point on $H_1$, intersected by $\ker(y-\frac{1}{4})$. However, this in turn implies that $(1,\frac{1}{4})$ is a vertex on $H_2$, and thus a vertex in the graph $G(H_2)$. But this intersection point of $H_2$ would span another connected component on $G(H_2)$, which leads to a contradiction.\\
Finally, we note that for $k=3$, the construction described in the second half of the proof yields a free extension. 
\end{proof}
    
If $m$ is an unbalanced multiplicity on $\CA(B_2)$, then the exponents $\exp(\CA(B_2),m)$ are known. However, existing results on the classification of $\exp(\CA(B_2),m)$ are largely limited to partial results obtained by Feigin, Wang, and Yoshinaga \cite{2309.01287}, as well as Maehara and Numata \cite{2312.06356}. Even without a complete description of the exponents, it may still be possible to generalize the proof strategy of Theorem \ref{theorem:MainResult} to a broader class of multiplicities, since the range of possible exponent combinations for $(\CA(B_2),m)$ is quite restricted.  
\begin{question}
    Is it possible to fully classify when a free extension exists of $(\CA(B_2),m)$ with an arbitrary multiplicity $m$?
\end{question}

\section{Implications for higher dimensions}
Theorem \ref{theorem:MainResult} has direct implications for the existence of free extensions of higher-dimensional arrangements, due to the following result by Abe, Nuida, and Numata. 

\begin{theorem}[{\cite[Prop.~1.7]{abenuidanumata:signedeliminable}}]\label{theorem: Multilocalizations are free}
	Let $(\CA,m)$ be a multiarrangement. For $X\in L(\CA)$ the localization $(\CA_X,m_X)$ at $X$ is free provided that $(\CA,m)$ is free. 
\end{theorem}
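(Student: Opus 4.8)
The final statement to prove is Theorem~\ref{theorem: Multilocalizations are free}, asserting that if $(\CA,m)$ is free, then every localization $(\CA_X,m_X)$ at $X\in L(\CA)$ is free. The plan is to exploit the tensor/product structure of the derivation module under localization and to reduce freeness of the localization to freeness of the full arrangement via a splitting argument on the underlying coordinate spaces.

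First I would set up coordinates adapted to $X$. Choosing a basis of $V^*$ so that $X=\ker(x_1,\dots,x_r)$ where $r=\rank(X)$, every hyperplane $H\in\CA_X$ satisfies $X\subseteq H$, and hence its defining form $\alpha_H$ depends only on the variables $x_1,\dots,x_r$. This means the multiarrangement $(\CA_X,m_X)$ is, up to the free extra variables $x_{r+1},\dots,x_\ell$, essentially an arrangement living in the $r$-dimensional quotient $V/X$. The key structural fact I would use is that the derivation module decomposes as a tensor product: writing $S=S'\otimes_\KK S''$ with $S'=\KK[x_1,\dots,x_r]$ and $S''=\KK[x_{r+1},\dots,x_\ell]$, one has $D(\CA_X,m_X)\cong D(\CA_X',m_X)\otimes_{S'} S$, where $(\CA_X',m_X)$ is the essentialization in $V/X$, together with the free summands $\partial_{x_{r+1}},\dots,\partial_{x_\ell}$. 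Thus freeness of the essential arrangement in $r$ variables is equivalent to freeness of $(\CA_X,m_X)$, and I would carry the argument out on the essential model.

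The heart of the proof is a localization argument at the prime ideal associated to $X$. The idea is that logarithmic derivations are preserved under localizing the polynomial ring at the multiplicatively closed set of polynomials not vanishing identically on $X$. Concretely, inverting the forms $\alpha_H$ for $H\in\CA\setminus\CA_X$ (which are units near the generic point of $X$ because they do not contain $X$) kills the defining conditions coming from those hyperplanes, so that $D(\CA,m)\otimes_S S_{\mathfrak p}$ agrees with $D(\CA_X,m_X)\otimes_S S_{\mathfrak p}$, where $\mathfrak p$ is the prime ideal defining $X$. Since $(\CA,m)$ is free, $D(\CA,m)$ is a free, hence projective, $S$-module, and freeness/projectivity is preserved under localization, so $D(\CA_X,m_X)_{\mathfrak p}$ is free over $S_{\mathfrak p}$. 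The remaining step is to descend this from the local statement back to a global statement over the graded ring: because $D(\CA_X,m_X)$ is a finitely generated graded module over the graded polynomial ring $S$, and projectivity of a finitely generated graded module over a polynomial ring is equivalent to freeness (by the graded Nakayama lemma / Quillen--Suslin in the graded setting), one concludes that $D(\CA_X,m_X)$ itself is free.

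The main obstacle I anticipate is making the localization comparison $D(\CA,m)_{\mathfrak p}=D(\CA_X,m_X)_{\mathfrak p}$ fully rigorous, in particular verifying that inverting the forms $\alpha_H$ for $H\notin\CA_X$ exactly removes the defining congruence conditions $\theta(\alpha_H)\in\alpha_H^{m(H)}S$ without introducing spurious derivations; one must check that a derivation satisfying only the $\CA_X$-conditions, after localization, can be corrected to satisfy all conditions, or argue directly that the two modules have the same localization because the extra conditions become vacuous when $\alpha_H$ is a unit. A clean alternative that sidesteps some of this bookkeeping is to invoke the result of Abe, Nuida, and Numata directly as cited, but if a self-contained argument is desired, the localization-and-descent strategy above is the natural route, with the graded projective-implies-free principle providing the final descent.
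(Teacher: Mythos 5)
The paper offers no proof of this statement: it is imported verbatim as \cite[Prop.~1.7]{abenuidanumata:signedeliminable}, so the only comparison available is with the standard argument in that reference, which is precisely the localization argument you outline. Your core identity is correct and can be made rigorous exactly as you anticipate: $D(\CA,m)$ is the finite intersection over $H\in\CA$ of the kernels of the maps $\operatorname{Der}_\KK(S)\to S/\alpha_H^{m(H)}S$, $\theta\mapsto\theta(\alpha_H)$, and since localization is exact and commutes with finite intersections of submodules of a fixed module, $D(\CA,m)\otimes_S S_{\mathfrak p}$ and $D(\CA_X,m_X)\otimes_S S_{\mathfrak p}$ are both identified with $\{\theta\in\operatorname{Der}_\KK(S)_{\mathfrak p}\mid \theta(\alpha_H)\in\alpha_H^{m(H)}S_{\mathfrak p}\text{ for }H\in\CA_X\}$, because $\alpha_H$ is a unit in $S_{\mathfrak p}$ for every $H\notin\CA_X$ (here $\mathfrak p=(x_1,\dots,x_r)$ is the prime of $X$). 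So no ``correction'' of derivations is needed; the worry you raise in your last paragraph dissolves.

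The one genuine flaw is in your final descent step as written. Freeness of $D(\CA_X,m_X)_{\mathfrak p}$ at the \emph{single} prime $\mathfrak p$ gives neither projectivity over $S$ nor, for a general finitely generated graded $S$-module, global freeness, so the appeal to ``graded projective implies free'' or Quillen--Suslin is a non sequitur: for instance, with $r<\ell$ the graded module $M=S\oplus S/(x_\ell)$ satisfies $M_{\mathfrak p}\cong S_{\mathfrak p}$ (since $x_\ell$ is a unit in $S_{\mathfrak p}$), yet $M$ is not free. The repair lies in the essentialization you already set up but did not actually use at this point: write $S=S'\otimes_\KK S''$ with $S'=\KK[x_1,\dots,x_r]$, so that $D(\CA_X,m_X)\cong\bigl(D_0\otimes_{S'}S\bigr)\oplus\bigl(\bigoplus_{j>r}S\,\partial_{x_j}\bigr)$ with $D_0$ the derivation module of the essentialized multiarrangement over $S'$. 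Since $\mathfrak p\cap S'$ is the irrelevant maximal ideal $\mathfrak m'$ of $S'$ and $S'_{\mathfrak m'}\to S_{\mathfrak p}$ is a flat local homomorphism, hence faithfully flat, freeness of $D_0\otimes_{S'}S_{\mathfrak p}$ over $S_{\mathfrak p}$ descends (via flat base change for $\operatorname{Tor}$) to freeness of $(D_0)_{\mathfrak m'}$ over $S'_{\mathfrak m'}$; then the graded Nakayama lemma --- a finitely generated graded $S'$-module that is free after localizing at the irrelevant maximal ideal is free, since the relation module of a minimal homogeneous generating system is graded and vanishes at $\mathfrak m'$ --- gives freeness of $D_0$ over $S'$, hence of $D(\CA_X,m_X)$ over $S$. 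With that substitution your sketch closes and coincides with the proof in the cited source.
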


By combining Theorem \ref{theorem:MainResult} and Theorem \ref{theorem: Multilocalizations are free}, we obtain the following result. 

\begin{corollary}\label{corollary: implications for higher Bn}
    Let $(\CA,m)=(\CA(B_n),m)$ be the Coxeter multiarrangement of type $B_n$. If there exists a $X\in L(\CA)$, with $\rank(X)=2$ and $(\CA_X,m_X)=(\CA(B_2),m_k)$ (where $k\geq 4$ and $(\CA(B_2),m_k)$ as in Theorem \ref{theorem:MainResult}), then no free extension of $(\CA,m)$ exists.
\end{corollary}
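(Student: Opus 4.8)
The plan is to argue by contraposition, using the localization result of Theorem \ref{theorem: Multilocalizations are free} to transfer the non-extendability of the rank-two piece up to the full rank-$n$ arrangement. Suppose, for contradiction, that a free extension $\CE$ of $(\CA,m)=(\CA(B_n),m)$ does exist. By definition, $\CE$ is a free simple $(n+1)$-arrangement together with a distinguished hyperplane $H_0\in\CE$ such that the Ziegler restriction satisfies $(\CE^{H_0},m^{H_0})=(\CA,m)$. The goal is to descend to the flat $X\in L(\CA)$ of rank two with $(\CA_X,m_X)=(\CA(B_2),m_k)$ and produce a free extension of $(\CA(B_2),m_k)$ there, contradicting Theorem \ref{theorem:MainResult}.

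The key step is to identify the correct flat of $\CE$ to localize at. Since $X\in L_2(\CA)$ and $\CA=\CE^{H_0}$ is the restriction of $\CE$ to $H_0$, the flat $X$ corresponds to a flat $\tilde X\in L(\CE)$ of rank three with $X=\tilde X\cap H_0$ (equivalently $\tilde X\subseteq H_0$ and $\rank(\tilde X)=3$ in $\CE$, restricting to a rank-two flat inside $H_0$). First I would make precise the relationship $\CE_{\tilde X}^{H_0}=\CA_X$ on the underlying simple arrangements, namely that restricting the localization $\CE_{\tilde X}$ to $H_0$ recovers exactly the localized rank-two arrangement $\CA_X=\CA(B_2)$, and that the induced Ziegler multiplicity agrees: $(\CE_{\tilde X}^{H_0}, m^{H_0}|_{\CA_X}) = (\CA_X, m_X) = (\CA(B_2), m_k)$. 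This compatibility between localization and Ziegler restriction is the crux; it says that localizing the big extension $\CE$ at $\tilde X$ yields a $3$-arrangement $\CE_{\tilde X}$ whose Ziegler restriction to $H_0$ is precisely $(\CA(B_2),m_k)$.

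Next I would invoke freeness twice. Because $\CE$ is free and simple, Theorem \ref{theorem: Multilocalizations are free} (applied with $m\equiv 1$) guarantees that the localization $\CE_{\tilde X}$ is again free. Thus $\CE_{\tilde X}$ is a free simple $3$-arrangement containing $H_0$ with $(\CE_{\tilde X}^{H_0}, m^{H_0}|_{\CA_X})=(\CA(B_2),m_k)$; in other words, $\CE_{\tilde X}$ is a \emph{free extension} of $(\CA(B_2),m_k)$ in the sense of the definition. But Theorem \ref{theorem:MainResult} asserts that for $k\geq 4$ no free extension of $(\CA(B_2),m_k)$ exists. This contradiction shows that $\CE$ cannot be free, completing the proof.

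The main obstacle I anticipate is the bookkeeping in the second paragraph: verifying that Ziegler restriction commutes with localization in the required sense, i.e. that the Ziegler multiplicity $m^{H_0}$ induced by the whole arrangement $\CE$, when restricted to the hyperplanes of $\CA_X=\CA(B_2)$, coincides with the multiplicity $m_X=m_k$ coming from localizing $(\CA,m)$. Since $m^{H_0}(Y)=|\CE_Y|-1$ counts hyperplanes of $\CE$ through a rank-two flat $Y\subset H_0$, and localizing at $\tilde X$ only discards hyperplanes not containing $\tilde X$, one must check that no hyperplane contributing to the multiplicity at a hyperplane of $\CA(B_2)$ is lost in passing to $\CE_{\tilde X}$; this holds precisely because such contributing hyperplanes already pass through $\tilde X$. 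Once this local-to-global compatibility is nailed down, the remaining steps are immediate applications of the two cited theorems.
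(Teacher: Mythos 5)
Your proposal is correct and follows essentially the same route as the paper: localize the putative free extension $\CE$ at the flat $X$ (your $\tilde X$ is just $X$ viewed in $L(\CE)$), observe that $\CE_X$ is then an extension of $(\CA_X,m_X)=(\CA(B_2),m_k)$, and derive a contradiction from Theorem~\ref{theorem: Multilocalizations are free} applied to the simple free arrangement $\CE$ together with Theorem~\ref{theorem:MainResult}. The compatibility check you flag as the main obstacle --- that $(\CE_X)_Y=\CE_Y$ for every $Y\in\CA_X$ since any hyperplane through $Y\supseteq X$ already contains $X$, so the Ziegler multiplicities agree --- is exactly the point the paper's proof leaves implicit, and your verification of it is sound.
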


\begin{proof}
    Let $(\CA,m)=(\CA(B_n),m)$ and suppose $\CE$ is an extension of $(\CA,m)$. Fix $X \in L(\CA)$ such that $(\CA_X,m_X)=(\CA(B_2),(2,k,1,k))$. Then $\CE_X$ is an extension of $(\CA_X,m_X)=(\CA(B_2),(2,k,1,k))$. By Theorem \ref{theorem:MainResult} $(\CA_X,m_X)$ does not admit a free extension. Finally, Theorem \ref{theorem: Multilocalizations are free} implies that $\CE_X$ cannot free.
\end{proof}

To construct an example illustrating Corollary \ref{corollary: implications for higher Bn} in the case $n=3$, we make use of the following theorem by Abe, Terao and Wakefield.

\begin{theorem}[{\cite[Thm.~5.10]{MR2400395}}]\label{theorem: free vertex}
Suppose $\CA$ is supersolvable with a filtration $\CA=\CA_r\supset\CA_{r-1}\supset\dots\supset\CA_2\supset\CA_1$ and $r\geq 2$.
Let $m$ be a multiplicity on $\CA$ and let $m_i = m\vert_{\CA_i}$. Let $\exp(\CA_2,m_2)=(d_1,d_2,0,\dots,0)$. Suppose that for each $H'\in \CA_d\backslash \CA_{d-1}$, $H''\in\CA_{d-1}$ $(d=3,\dots,r)$ and $X:=(H'\cap H'')$, we either have $$\CA_X=\{H',H''\}$$ or 
$$m(H'')\geq\left(\sum_{X\subset H\in \CA_d\backslash\CA_{d-1}}m(H)\right)-1.$$ Then $(\CA,m)$ is inductively free with $\exp(\CA,m)=(d_1,d_2,\vert m_3\vert-\vert m_2\vert,\dots,\vert m_r\vert-\vert m_{r-1}\vert,0,\dots,0)$.
\end{theorem}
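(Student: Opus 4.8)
The plan is to prove the statement by induction on the length $r$ of the supersolvable filtration, using the Addition--Deletion Theorem for multiarrangements together with the Euler multiplicity of Abe, Terao and Wakefield \cite{ATW07}. Recall that this framework attaches to a multiarrangement $(\CA,m)$ and a chosen hyperplane $H_0\in\CA$ a deletion $(\CA,m')$ (decreasing $m(H_0)$ by one) and an Euler restriction $(\CA^{H_0},m^*)$, and that freeness of any two members of this triple, with exponents in the expected relation, forces freeness of the third; inductive freeness is then obtained by exhibiting an explicit chain of such unit steps. The base case $r=2$ is immediate: $(\CA_2,m_2)$ has rank two, hence is free by Ziegler's theorem \cite{MR1000610}, and its exponents $(d_1,d_2)$ are exactly the first two entries of the claimed exponent vector.

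For the inductive step, I would assume the statement for filtrations of length $r-1$. The passage from $\CA_{r-1}$ to $\CA=\CA_r$ consists of adjoining the hyperplanes of $\CA_r\setminus\CA_{r-1}$ together with their multiplicities, and I would adjoin these one unit of multiplicity at a time, starting from $(\CA_{r-1},m_{r-1})$ and applying the addition direction of the multiarrangement Addition--Deletion Theorem at each step. At every unit step on a hyperplane $H'\in\CA_r\setminus\CA_{r-1}$, the object that must be understood is the Euler restriction $(\CA_r^{H'},m^*)$. Here the supersolvability is essential: the modularity of $\CA_{r-1}$ in $\CA_r$ controls how the hyperplanes of $\CA_{r-1}$ meet $H'$ and endows $\CA_r^{H'}$ with a supersolvable filtration of length $r-1$ whose levels are the images of $\CA_2\subset\dots\subset\CA_{r-1}$. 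Thus, provided the induced multiplicity $m^*$ again satisfies the hypotheses of the theorem, the induction hypothesis yields that $(\CA_r^{H'},m^*)$ is inductively free with exponents equal to the appropriate truncation of the target vector, which is precisely what the Addition Theorem needs.

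The heart of the argument, and the step I expect to be the main obstacle, is the explicit computation of the Euler multiplicity $m^*(X)$ at each rank-two flat $X=H'\cap H''$ with $H''\in\CA_{r-1}$, and the verification that it reproduces the exponent $|m_r|-|m_{r-1}|$ appearing in the formula. This is exactly where the dichotomy in the hypotheses enters. If $\CA_X=\{H',H''\}$, then the localization $(\CA_X,m_X)$ is the two-line multiarrangement, whose basis is transparent, and one reads off $m^*(X)=m(H'')$. If instead several hyperplanes pass through $X$, the lower bound $m(H'')\geq\bigl(\sum_{X\subset H\in\CA_r\setminus\CA_{r-1}}m(H)\bigr)-1$ forces $(\CA_X,m_X)$ to be unbalanced, or to sit exactly at the peak, so that by the Wakefield--Yuzvinsky description (Remark \ref{Remark: Unbalanced m exponents}) its exponents, and hence $m^*(X)$, are again pinned down. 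The delicate point is the boundary case encoded by the ``$-1$'': during the unit-by-unit addition there is one intermediate stage at which the relevant localization becomes balanced rather than strictly unbalanced, and one must check that the Euler multiplicity still takes the value required for the Addition Theorem to apply with the predicted exponents; controlling this peak case is precisely what the estimate of Theorem \ref{theorem: peak points} is tailored for.

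Finally, I would assemble the exponent bookkeeping. Each complete block of unit additions corresponding to the new hyperplanes $\CA_d\setminus\CA_{d-1}$ raises the top exponent by the total added multiplicity $|m_d|-|m_{d-1}|$ while leaving the lower exponents (coming from $\CA_2$ and the earlier blocks) unchanged; iterating over $d=3,\dots,r$ produces exactly $\exp(\CA,m)=(d_1,d_2,|m_3|-|m_2|,\dots,|m_r|-|m_{r-1}|,0,\dots,0)$. Since every step in the chain was an application of the Addition--Deletion Theorem with matching exponents, the resulting multiarrangement is not merely free but inductively free, completing the induction.
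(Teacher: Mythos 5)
First, a point of orientation: the paper does not prove this statement at all --- it is quoted from Abe--Terao--Wakefield \cite[Thm.~5.10]{MR2400395} --- so your proposal can only be measured against the proof given there. Your overall architecture does match theirs: induct along the supersolvable filtration, build up the top-level multiplicity one unit at a time starting from $(\CA_{r-1},m_{r-1})$, and at each unit step invoke the addition direction of the multiarrangement Addition--Deletion Theorem, whose third member is the Euler restriction (note that this machinery lives in \cite{MR2400395}, not in \cite{ATW07} as you cite). Your structural observations are also sound: for $H'\in\CA_r\setminus\CA_{r-1}$, supersolvability absorbs every rank-two flat $H'\cap H'''$ with $H'''$ in the top block into a flat $H'\cap K$ with $K\in\CA_{r-1}$, so the restriction $\CA^{H'}$ is essentially a copy of $\CA_{r-1}$; and since lowering top-block multiplicities only weakens the right-hand side of the hypothesis inequality, the hypotheses persist along the entire chain of unit additions, which is what makes the induction run.

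The genuine gap is in your Euler-multiplicity step. You argue that when several hyperplanes pass through $X$, the bound forces $(\CA_X,m_X)$ to be unbalanced or at the peak, so that by the Wakefield--Yuzvinsky description (Remark \ref{Remark: Unbalanced m exponents}) ``its exponents, and hence $m^*(X)$, are again pinned down.'' That inference is unsupported: the Euler multiplicity is not a function of the local exponents. It depends on which hyperplane of $\CA_X$ is restricted --- already in your own easy case $\CA_X=\{H',H''\}$ the answer is $m^*(X)=m(H'')$, the multiplicity of the \emph{other} hyperplane, not an exponent read off from the multiset $\exp(\CA_X,m_X)=(m(H'),m(H''))$ --- and there is no general ``exponents determine $m^*$'' lemma to appeal to. What the actual proof uses are the dedicated local computations of \cite[\S 5]{MR2400395}: $m^*(X)=m(H'')$ when $\vert\CA_X\vert=2$, and $m^*(X)=m(H'')$ whenever $2m(H'')\geq\vert m_X\vert-1$, the latter being exactly the translation of the hypothesis $m(H'')\geq\bigl(\sum_{X\subset H\in\CA_d\setminus\CA_{d-1}}m(H)\bigr)-1$ when $H''$ is the hyperplane of $\CA_{d-1}$ through $X$ and $H_0=H'$. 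In particular, the boundary case encoded by the ``$-1$'' is settled by this direct computation, not by Theorem \ref{theorem: peak points} as you suggest: that estimate (a) constrains only exponents, never $m^*$, and (b) for $\vert\CA_X\vert\geq 4$ does not even determine the exponents at the balanced boundary $2m(H'')=\vert m_X\vert-1$. Once those local propositions are inserted in place of your exponent argument, the Euler restriction along your chain is constantly $(\CA_{r-1},m_{r-1})$, the exponent bookkeeping you describe goes through verbatim, and the induction closes as in \cite{MR2400395}.
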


We apply Corollary \ref{corollary: implications for higher Bn} together with Theorem \ref{theorem: free vertex} to obtain the following family of multiplicities on the Coxeter arrangement $\CA(B_3)$ of type $B_3$, for which no free extensions exist.

\begin{corollary}\label{corollary: B3 non extendable example}
    Let $$Q(\CA(B_3),n_k)=x^2 y^k (x-y)^1 (x+y)^k z^e (x-z)^f (x+z)^g (y-z)^h (y+z)^i, (k\geq 4),$$ be the Coxeter multiarrangement of type $B_3$. Fix the supersolvable filtration $$\{\ker(x)\}\subset \{\ker(x), \ker(y), \ker(x-y), \ker(x+y)\}\subset \CA(B_3).$$ Then $(\CA(B_3),n_k)$ satisfies the conditions of Theorem \ref{theorem: free vertex} if and only if $e=f=g=h=i=1$. Moreover, each $(\CA(B_3),n_k)$ is inductively free with exponents $\exp(\CA(B_3),n_k)=(5,k+1,k+2)$. However, no free extension of $(\CA(B_3),n_k)$ exists.
\end{corollary}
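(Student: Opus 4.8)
The plan is to verify the three assertions of Corollary~\ref{corollary: B3 non extendable example} in order: first the iff-characterization of the Theorem~\ref{theorem: free vertex} hypotheses, then inductive freeness with the stated exponents, and finally non-extendability via Corollary~\ref{corollary: implications for higher Bn}. For the first part I would unwind the supersolvable filtration $\CA_1=\{\ker(x)\}\subset\CA_2=\{\ker(x),\ker(y),\ker(x-y),\ker(x+y)\}\subset\CA_3=\CA(B_3)$, so that $r=3$ and the only nontrivial verification occurs at the top step $d=3$. For each pair $H'\in\CA_3\setminus\CA_2$ and $H''\in\CA_2$ I would compute $X=H'\cap H''$ and check whether the rank-two localization $\CA(B_3)_X$ is a near-pencil $\{H',H''\}$ or whether the multiplicity inequality $m(H'')\geq\big(\sum_{X\subset H\in\CA_3\setminus\CA_2}m(H)\big)-1$ holds. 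The defining polynomial of $B_3$ tells us exactly which triples of hyperplanes are concurrent (e.g.\ $\ker(x),\ker(y-z),\ker(y+z)$ etc.), so this is a finite and explicit case check. I expect the inequality to force each of $e,f,g,h,i$ to equal $1$: whenever a flat $X$ lies on two hyperplanes of $\CA_3\setminus\CA_2$ together with one of the large-multiplicity hyperplanes of $\CA_2$, balancing the inequality against $m(\ker(x))=2$ and $m(\ker(x\pm y))=k$ pins down the free-over-$\CA_2$ multiplicities to be as small as possible.

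Once the hypotheses of Theorem~\ref{theorem: free vertex} are confirmed for $e=f=g=h=i=1$, the second assertion is immediate: the theorem yields inductive freeness, and its exponent formula gives $\exp(\CA(B_3),n_k)=(d_1,d_2,\lvert m_3\rvert-\lvert m_2\rvert)$. Here $\exp(\CA_2,m_2)=\exp(\CA(B_2),(2,k,1,k))=(k+1,k+2)$ by Corollary~\ref{corollary: deltas for m} (since $\lvert m_2\rvert=2k+3$ is odd and the multiplicity is balanced, giving $\Delta=1$), while $\lvert m_3\rvert-\lvert m_2\rvert=(2k+3+e+f+g+h+i)-(2k+3)=5$ when all five outer multiplicities equal $1$. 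This produces $\exp(\CA(B_3),n_k)=(k+1,k+2,5)=(5,k+1,k+2)$ as claimed, and I would simply record the arithmetic.

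For the final and conceptually cleanest assertion, I would invoke Corollary~\ref{corollary: implications for higher Bn} directly. The rank-two flat $X_0=\ker(z)\cap\ker(x-z)\cap\cdots$---more precisely the flat cut out so that $\CA(B_3)_{X_0}$ consists of the four hyperplanes $\ker(x),\ker(y),\ker(x-y),\ker(x+y)$---has localized multiarrangement $(\CA_{X_0},m_{X_0})=(\CA(B_2),(2,k,1,k))=(\CA(B_2),m_k)$ with $k\geq 4$. I would identify $X_0$ as the flat $\{x=y=0\}$ (equivalently $\ker(x)\cap\ker(y)$), on which exactly the four hyperplanes not involving $z$ pass, carrying precisely the multiplicities $(2,k,1,k)$. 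Since $X_0\in L(\CA(B_3))$ has rank two and its localization is the non-extendable multiarrangement of Theorem~\ref{theorem:MainResult}, Corollary~\ref{corollary: implications for higher Bn} immediately yields that no free extension of $(\CA(B_3),n_k)$ exists.

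The main obstacle I anticipate is the first part, namely the \emph{only if} direction of the iff: showing that any deviation from $e=f=g=h=i=1$ breaks a hypothesis of Theorem~\ref{theorem: free vertex}. The \emph{if} direction is a routine check of the finitely many rank-two flats, but to prove necessity I must argue that enlarging some outer multiplicity produces a flat $X=H'\cap H''$ with $\lvert\CA_X\rvert\geq 3$ for which both alternatives of the theorem fail simultaneously---that is, $\CA_X\neq\{H',H''\}$ and the multiplicity inequality is violated. This requires carefully tracking which concurrences in $B_3$ force the critical inequality and checking that the small fixed multiplicities on $\CA_2$ (namely $2$ and $k$) cannot absorb an increase on the $z$-hyperplanes. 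I would handle this by isolating, for each of $e,f,g,h,i$, one witnessing flat where raising that multiplicity above $1$ makes the right-hand side of the inequality exceed the relevant $m(H'')$, thereby pinpointing the failure.
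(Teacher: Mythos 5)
Your proposal follows essentially the same route as the paper: the paper likewise verifies Theorem~\ref{theorem: free vertex} only at the top filtration step, reading off the three binding inequalities $2\geq e+f+g-1$, $1\geq f+h-1$, $1\geq g+i-1$ from the localizations at $\ker(x)\cap\ker(z)$, $\ker(x-y)\cap\ker(x-z)$ and $\ker(x-y)\cap\ker(x+z)$ to force $e=f=g=h=i=1$, and then concludes non-extendability by applying Corollary~\ref{corollary: implications for higher Bn} at the rank-two flat $\{x=y=0\}$, exactly as you do. Your first description of $X_0$ as involving $\ker(z)$ is a slip, but you immediately correct it to the flat $\ker(x)\cap\ker(y)$ carrying the multiplicities $(2,k,1,k)$, which is precisely the paper's choice.
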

\begin{proof}
    We fix the supersolvable filtration $\{\ker(x)\}\subset \{\ker(x), \ker(y), \ker(x-y), \ker(x+y)\}\subset \CA(B_3)$ The relevant localizations for Theorem \ref{theorem: free vertex} yield the following inequalities:
    \begin{center}
    $\{\ker(x),\ker(z),\ker(x-z),\ker(x+z)\}$ which gives $a\geq e+f+g-1$.\\
    $\{\ker(x-y),\ker(x-z),\ker(y-z)\}$ which gives $c\geq f+h-1$.\\
    $\{\ker(x-y),\ker(x+z),\ker(y+z)\}$ which gives $c\geq g+i-1$.
    \end{center}
    Now let $(a,b,c,d)=(2,k,1,k)$, where $k\geq 4$. Substituting into the inequalities we obtain $2\geq e+f+g-1$, $1\geq f+h-1$, and $1\geq g+i-1$. The second and third inequalities imply $f=g=h=i=1$. Substituting into the first inequality yields $2\geq e+1+1-1=e+1$, so $e=1$. Therefore, the conditions of Theorem~\ref{theorem: free vertex} are satisfied if and only if $e = f = g = h = i = 1$. Finally, by Corollary~\ref{corollary: implications for higher Bn}, we conclude that no free extension of $(\mathcal{A}(B_3), n_k)$ exists.
\end{proof}

\begin{remark}
    It is well known that for every Coxeter multiarrangement $(\CA(A_2),m)$ of type $A_2$, a free extension always exists. In \cite{MR3025868}, Yoshinaga provided a canonical free extension of a multiarrangement $(\CA,m)$ of rank greater than two, under the condition that $\CA$ is locally $A_2$, along with other constraints on both $\CA$ and $m$. This result raises the natural expectation that similar canonical free extensions might be constructed for other rank two arrangements.\\
    However, our main result indicates that this is not generally the case: even when moving from type $A_2$ to type $B_2$, the construction of free extensions may fail in many instances. As demonstrated in Corollary \ref{corollary: implications for higher Bn}, this obstruction extends to arrangements of higher rank, where the existence of a single non-extendable localization can prevent the existence of any free extension.
\end{remark}

\section*{Acknowledgements}{The authors would like to express their sincere gratitude to Takuro Abe and Gerhard Röhrle for directing them to relevant literature important to this project and for their insightful feedback throughout its development. Furthermore, the authors are grateful for their careful reading of the manuscript and for their valuable suggestions, which have significantly improved the quality of this work.}

\section{Appendix}\label{appendix}
In this appendix, we recall two fundamental results before presenting the proof of Corollary \ref{corollary: deletion exponents appendixproof}. The first of these results, due to Maehara and Numata, is stated below.
\begin{corollary}[{\cite[Thm.~3.3,~Cor.~3.4]{2312.06356}}]\label{thm:main}
Let $(\CA(B_2),m)$ be a multiarrangement where $m=(m_1,m_2,m_3,m_4)$ is balanced. 
If $m$ satisfies either $|m_4-m_3|\leq 1$ or $|m_4-m_3|=2$ and $m_1,m_2\in2\ZZ+1$, then 
$m$ is a peak point if and only if $m_4-m_3=0$, $m_1$,$m_2\in 2\ZZ+1$ and $|m|\in 4\ZZ$. 
\end{corollary}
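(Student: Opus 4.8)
The plan is to recast the peak-point property as a single low-degree existence question for logarithmic derivations, reduce it to the vanishing of one determinant, and then resolve that vanishing using the dihedral symmetry of $\CA(B_2)$ while staying inside the regime fixed by the hypothesis. Write $q:=|m|/2-1$, and recall that Saito's criterion forces $d_1+d_2=|m|$ for any $2$-multiarrangement (the Saito determinant is homogeneous of degree $d_1+d_2$ and equals $Q(\CA,m)$ up to a scalar). Hence, by Theorem~\ref{theorem: peak points} and Corollary~\ref{corollary: deltas for m}, for balanced $m$ the minimal degree $d_1$ of a nonzero element of $D(\CA(B_2),m)$ always satisfies $d_1\ge q$, and $m$ is a peak point precisely when $|m|$ is even and $d_1=q$. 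The first step is therefore to reduce the whole statement to the assertion that, for $|m|$ even, $D(\CA(B_2),m)$ contains a nonzero homogeneous derivation of degree $q$ (existence of such a derivation forces $d_1=q$, since $d_1\ge q$ always).

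For the second step, a homogeneous $\theta=f\,\partial_x+g\,\partial_y$ of degree $q$ lies in $D(\CA(B_2),m)$ exactly when
\[
x^{m_1}\mid f,\qquad y^{m_2}\mid g,\qquad (x-y)^{m_3}\mid(f-g),\qquad (x+y)^{m_4}\mid(f+g).
\]
Since $m$ is balanced we have $m_i\le q$, so each divisibility is nonvacuous and cuts out exactly $m_i$ linear conditions; the total $m_1+m_2+m_3+m_4=|m|$ of them act on the $2(q+1)=|m|$ coefficients of $(f,g)$. The system is thus square, and a nonzero degree-$q$ derivation exists if and only if its coefficient matrix is singular. In this way the classification becomes the vanishing of a single determinant $D(m)$.

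To analyze $D(m)$ I would diagonalize the action of the symmetries $(x,y)\mapsto(y,x)$ and $(x,y)\mapsto(x,-y)$ on the space of pairs $(f,g)$. These commute with the constraint system and split it into symmetric and antisymmetric blocks of roughly half the size; in the rotated coordinates $u=x+y$, $v=x-y$ the conditions along $x,y$ decouple from those along $x\pm y$, so each block records the compatibility of prescribed jets along the four lines. I expect the symmetry analysis to show first that a kernel can occur only in the symmetric configuration $m_3=m_4$, which already forces part of the stated criterion; within $m_3=m_4$ the parities of $m_1$ and $m_2$ determine which block carries the kernel, and the residue of $|m|$ modulo $4$ decides whether the two half-systems admit a common solution. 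Extracting the clean conditions $m_1,m_2\in 2\ZZ+1$ and $|m|\in 4\ZZ$ from this jet bookkeeping along $x-y$ and $x+y$ is the computational heart of the argument and the step I expect to be the main obstacle.

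Finally, to control the cases with $m_3\ne m_4$ and to keep the determinant analysis inside a tractable range, I would run a short deletion induction on $\delta:=|m_4-m_3|$. When $\delta=0$ the symmetric computation above yields the criterion directly. When $\delta\in\{1,2\}$ the right-hand condition $m_4=m_3$ fails, so the claim is that $m$ is \emph{not} a peak point, i.e.\ $D(m)\ne 0$; here I would delete a single copy from the larger of $\ker(x-y),\ker(x+y)$ and propagate $\Delta=0$ from a base case, using the standard fact that the exponents of a $2$-multiarrangement move by at most one in each entry when a multiplicity is lowered by one. The extra hypothesis that $m_1,m_2$ are odd in the $\delta=2$ case is exactly what keeps every intermediate multiplicity balanced, so that Corollary~\ref{corollary: deltas for m} and the peak-point bound of Theorem~\ref{theorem: peak points} remain available at each step of the chain.
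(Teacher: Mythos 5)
Your first two steps are fine: reducing peak-pointness to the existence of a nonzero derivation of degree $q=|m|/2-1$ (using $d_1+d_2=|m|$, Theorem~\ref{theorem: peak points} and Corollary~\ref{corollary: deltas for m}), and recasting that as the vanishing of a square $|m|\times|m|$ determinant $D(m)$, is a correct and standard reformulation. The gap is in the third step, which is where all the content of the statement lives. The substitutions $(x,y)\mapsto(y,x)$ and $(x,y)\mapsto(x,-y)$ preserve the multiarrangement $(\CA(B_2),m)$ only when $m_1=m_2$, respectively $m_3=m_4$; for a general balanced $m$ they do not act on the constraint system at all, so there is no equivariant block decomposition of $D(m)$, and in particular the claim that ``a kernel can occur only in the symmetric configuration $m_3=m_4$'' cannot be an output of symmetry --- it is precisely the assertion to be proved. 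You then explicitly defer the extraction of $m_1,m_2\in 2\ZZ+1$ and $|m|\in 4\ZZ$ as ``the computational heart,'' so the if-and-only-if is never established even in the symmetric case.

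The fallback deletion induction on $\delta=|m_4-m_3|$ does not close the remaining cases either. The standard rank-two fact only says that lowering one multiplicity by one drops exactly one of the two exponents by one; after deleting a copy from the larger of $\ker(x-y),\ker(x+y)$ you land on a point with $|m'|$ odd, where $\Delta(m')=1$ is forced by balancedness alone (Corollary~\ref{corollary: deltas for m}) and carries no information. Going back up, exponents $(s,s+1)$ are compatible with both $(s+1,s+1)$ and $(s,s+2)$, i.e.\ with $\Delta(m)=0$ and with $\Delta(m)=2$; which exponent jumps is exactly the non-combinatorial datum (cf.\ Ziegler's observation that $\exp(\CA,m)$ is not determined by $L(\CA)$, and the multiplicity-lattice analysis of [AN12]), so nothing ``propagates.'' For comparison: the paper does not prove this corollary at all but quotes it from Maehara--Numata [MN25], and the route known to work is through explicit generators --- the integral expressions $\theta_{a,b,c}$ of Theorem~\ref{theorem:FWY} together with the vanishing criterion $I(a,b,c)=0\iff a=b$ and $a+b+c\in 2\ZZ$ of Proposition~\ref{proposition:differentialformula}; the appendix carries out exactly this computation in the special case $m_4=1$ (Corollary~\ref{corollary: deletion exponents appendixproof}). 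Your determinant $D(m)$ would have to be analyzed with comparable explicitness; as it stands, the proposal is a plan with the decisive steps missing, not a proof.
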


We now recall a result of Feigin, Wang and Yoshinaga from \cite{2309.01287}. Throughout we restrict our attention to balanced multiplicities $m=(m_1,m_2,m_3,m_4)$ on $\CA(B_2)$ with $m_4=\min\{m_i\mid 1\leq i \leq 4\}\leq2$.
\begin{theorem}[{\cite[Thm.~2.3]{2309.01287}}]
    \label{theorem:FWY}
    For $a,b,c\in\ZZ_{>0}$, let us define $\theta_{a,b,c}\in\Der(S)$ to be 
    $$\theta_{a,b,c}:=\left(\int^{x} t^c(t-x)^b(t-y)^a dt\right) \partial_{x} 
    + \left(\int^{y} t^c(t-x)^b(t-y)^a dt\right) \partial_{y}.$$ 
    Using this derivation, 
    
    \begin{itemize}
        \item 
    for any multiarrangement $(\CA(A_2),(p,q,r))$ defined by $Q(\CA(A_2),\mu):={x}^p{y}^q(x-y)^r$ with $p+q+r$ being odd and its multiplicity being balanced, 
        $D(\CA(A_2),(p,q,r))$ is generated by the two derivations $\theta_{a,b,c}$ and $\theta_{a,b,c+1}$ where 
        $$(a,b,c):=(\frac{-p+q+r-1}{2},\frac{p-q+r-1}{2},\frac{p+q-r-1}{2}).$$ 

        \item
    for any multiarrangement $(\CA(A_2),(p,q,r))$ defined by $Q(\CA(A_2),\mu):={x}^p{y}^q(x-y)^r$ with $p+q+r$ being even and its multiplicity being balanced, 
        $D(\CA(A_2),(p,q,r))$ is generated by the two derivations $x\cdot\theta_{a,b,c}$ and $y\cdot\theta_{a-1,b+1,c}$ where 
        $$(a,b,c):=(\frac{-p+q+r}{2},\frac{p-q+r-2}{2},\frac{p+q-r-2}{2}).$$ 
    \end{itemize}
Furthermore, for each of such triples $(a,b,c)\in\ZZ^3$, define $I(a,b,c)\in\QQ$ to be 
$$I(a,b,c):=(\theta_{a,b,c}(x+y))_{x=1,y=-1}=\int^{1}_{0} t^c(t-1)^b(t+1)^a dt+\int^{-1}_{0} t^c(t-1)^b(t+1)^a dt.$$
\end{theorem}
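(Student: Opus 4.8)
The plan is to interpret the indefinite integral in the definition as the \emph{homogeneous} antiderivative $F(s)=\int_0^s t^c(t-x)^b(t-y)^a\,dt$, a polynomial homogeneous of degree $a+b+c+1$ in $(t,x,y)$, so that $\theta_{a,b,c}=F(x)\partial_x+F(y)\partial_y$ with $\theta_{a,b,c}(x)=F(x)$, $\theta_{a,b,c}(y)=F(y)$ and $\theta_{a,b,c}(x-y)=F(x)-F(y)$. With this reading, proving the generation statement splits into three tasks: check that the displayed derivations lie in $D(\CA(A_2),(p,q,r))$, check that their degrees match the known exponents, and check that they are $S$-independent, so that by Saito's criterion they form a basis. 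Membership amounts to the three divisibilities $x^p\mid F(x)$, $y^q\mid F(y)$ and $(x-y)^r\mid F(x)-F(y)$.

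First I would establish these divisibilities by elementary substitutions in the one-variable integral. Writing $t=xu$ pulls out a factor $x^{b+c+1}$, so $x^{b+c+1}\mid F(x)$; writing $t=yu$ gives $y^{a+c+1}\mid F(y)$; and writing $t=y+(x-y)u$ gives $(x-y)^{a+b+1}\mid F(x)-F(y)$. In the odd case the identities $b+c+1=p$, $a+c+1=q$, $a+b+1=r$ deliver exactly the required membership, and balancedness of $m$ forces $a,b,c\ge 0$ so that the integrands are genuine polynomials. The even case is the same bookkeeping: the prefactors $x$ and $y$ compensate for the shifted exponents $b+c+1=p-1$ and $a+c+1=q$ for $x\theta_{a,b,c}$, and $(b+1)+c+1=p$ and $(a-1)+c+1=q-1$ for $y\theta_{a-1,b+1,c}$, while $a+b+1=r$ is unchanged. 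For degrees, $\theta_{a,b,c}$ has degree $a+b+c+1=\frac{|m|-1}{2}$ in the odd case (its partner $\theta_{a,b,c+1}$ having degree $\frac{|m|+1}{2}$), while both even-case generators have degree $\frac{|m|}{2}$; these agree with the exponents of $\CA(A_2)$ recorded in Remark~\ref{Remark: peak points and A2 classification}, and in each case the two degrees sum to $|m|=\deg Q(\CA(A_2),(p,q,r))$.

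Since a rank-two multiarrangement is automatically free of rank two, it remains to see that the two derivations are a basis, and here I would invoke Saito's criterion (\cite{MR1000610}, \cite{MR1217488}). The coefficient determinant $W=\theta_1(x)\theta_2(y)-\theta_1(y)\theta_2(x)$ is divisible by $Q$ because both derivations lie in $D$, and it has degree $\deg Q$, so $W=cQ$ for a scalar $c$; the pair is a basis exactly when $c\neq 0$. Evaluating at the generic point $(x,y)=(1,-1)$, which lies off every hyperplane of $\CA(A_2)$, reduces the problem to the nonvanishing of a $2\times 2$ determinant of one-variable integrals. In the odd case the substitution $t\mapsto -t$ converts $W(1,-1)$ into a sum of two products of integrals that all carry the same sign, so no cancellation is possible. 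In the even case $W(1,-1)=\pm(P_1P_4-P_2P_3)$ for four integrals $P_i$ that are positive up to a common sign, and writing this difference as a double integral over $[0,1]^2$ and symmetrizing in the two variables collapses the kernel to $2(s+t)$ times a manifestly positive factor, whence $W(1,-1)\neq 0$. Finally, the closing assertion about $I(a,b,c)$ is immediate once the antiderivative is fixed: $\theta_{a,b,c}(x+y)=F(x)+F(y)$, and setting $x=1,\,y=-1$ reproduces the stated sum of integrals.

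The main obstacle is exactly this last independence step, that is, upgrading $W=cQ$ to $c\neq 0$; the divisibility and degree computations are routine, and freeness is free of charge in rank two. I expect the even case to be the more delicate one, since there the determinant of integrals is a \emph{difference} of positive quantities, and it is only the symmetrization producing the positive kernel $2(s+t)$ that rules out an accidental equality $P_1P_4=P_2P_3$; by contrast the odd case never cancels on the nose. If this positivity argument were to resist a clean treatment, the fallback would be an integration-by-parts recurrence relating consecutive integrals, but I anticipate the Cauchy--Binet-style symmetrization to be the cleaner route.
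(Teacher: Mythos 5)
Your proposal is correct, but note at the outset that this paper does not prove the statement at all: Theorem~\ref{theorem:FWY} is imported verbatim as \cite[Thm.~2.3]{2309.01287}, so there is no internal proof to compare against, and your argument is in effect a self-contained reconstruction of the cited result of Feigin--Wang--Yoshinaga. Your reconstruction is sound and follows the route one would expect for such integral expressions: fixing the lower limit of the antiderivative at $0$ (correctly confirmed by the displayed formula for $I(a,b,c)$, which uses $\int_0^1$ and $\int_0^{-1}$), the substitutions $t=xu$, $t=yu$, $t=y+(x-y)u$ give exactly the divisibilities $x^{b+c+1}\mid F(x)$, $y^{a+c+1}\mid F(y)$, $(x-y)^{a+b+1}\mid F(x)-F(y)$, and the arithmetic $b+c+1=p$, $a+c+1=q$, $a+b+1=r$ (respectively the shifted identities in the even case, where balancedness plus parity indeed forces $a=(q+r-p)/2\geq 1$, so $\theta_{a-1,b+1,c}$ is well defined) yields membership in $D(\CA(A_2),(p,q,r))$; the degree count plus the Ziegler--Saito criterion then reduces everything to nonvanishing of the Wronskian at $(1,-1)$. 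Your sign bookkeeping there is right: in the odd case the $t\mapsto -t$ substitution produces $\pm\bigl(A_cB_{c+1}+B_cA_{c+1}\bigr)$ with all four integrals positive, and in the even case $W(1,-1)=\pm(AD-BC)$ with kernel $(1+t)(1+s)-(1-t)(1-s)=2(s+t)>0$ on $(0,1)^2$. One small simplification: no symmetrization is needed in the even case --- after factoring out the common integrand $t^cs^c(1-t)^b(1+t)^{a-1}(1+s)^b(1-s)^{a-1}$, the difference of the residual factors is already $2(s+t)$, so positivity is immediate. A cosmetic caveat you inherit from the statement itself: the hypothesis $a,b,c\in\ZZ_{>0}$ should really be $a,b,c\in\ZZ_{\geq 0}$ in boundary balanced cases (e.g.\ $q+r=p+1$ in the odd case gives $a=0$), which your polynomiality remark implicitly handles.
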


We begin by establishing the following for use in the proof.

\begin{proposition}\label{proposition:differentialformula}
Under the definitions  $\theta_{a,b,c}$ and $I(a,b,c)$, 
\begin{enumerate}
    \item $\theta_{a,b,c}(x+y)\in(x+y)S$ if and only if $I(a,b,c)=0$.
    \item $I(a,b,c)=0$ if and only if $a=b$ and $a+b+c\in2\ZZ$.
\end{enumerate}
\end{proposition}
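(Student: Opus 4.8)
The plan is to prove the two equivalences separately, with part (1) following almost immediately from the definitions and part (2) requiring the bulk of the work. For part (1), recall that $\theta_{a,b,c}(x+y) = \int^x t^c(t-x)^b(t-y)^a\,dt + \int^y t^c(t-x)^b(t-y)^a\,dt$ is a homogeneous polynomial, and the condition $\theta_{a,b,c}(x+y)\in (x+y)S$ means precisely that this polynomial vanishes on the hyperplane $\{x+y=0\}$, i.e. when we substitute $(x,y)=(1,-1)$ (any nonzero point on the line suffices by homogeneity). By definition $I(a,b,c)$ is exactly the value of $\theta_{a,b,c}(x+y)$ at $(x,y)=(1,-1)$, so $\theta_{a,b,c}(x+y)\in(x+y)S$ if and only if $I(a,b,c)=0$. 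This disposes of (1) essentially by unwinding notation.

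For part (2), I would work directly with the integral representation $I(a,b,c)=\int_0^1 t^c(t-1)^b(t+1)^a\,dt + \int_0^{-1} t^c(t-1)^b(t+1)^a\,dt$. The natural move is a substitution exploiting the symmetry of the two integrals: in the second integral substitute $t\mapsto -t$, which sends the interval $[0,-1]$ to $[0,1]$ and converts the integrand $t^c(t-1)^b(t+1)^a$ into $(-1)^c(-1)^b(-1)^a\,t^c(t+1)^b(t-1)^a = (-1)^{a+b+c}\,t^c(t-1)^a(t+1)^b$. Hence
\begin{equation*}
I(a,b,c)=\int_0^1 t^c(t-1)^b(t+1)^a\,dt + (-1)^{a+b+c}\int_0^1 t^c(t-1)^a(t+1)^b\,dt.
\end{equation*}
This single identity is the key structural fact, and from it both the sufficiency and necessity directions should flow.

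From the combined integral one reads off sufficiency quickly: if $a=b$ the two integrands coincide, so $I(a,b,c)=(1+(-1)^{a+b+c})\int_0^1 t^c(t-1)^a(t+1)^a\,dt$, and if in addition $a+b+c\in 2\ZZ$ the prefactor $1+(-1)^{a+b+c}$ vanishes, giving $I(a,b,c)=0$. The necessity direction is the main obstacle. When $a+b+c$ is odd, the two terms have opposite signs and one must show the difference of the two integrals is nonzero (for positive integer exponents); when $a+b+c$ is even but $a\ne b$, the prefactor is $2$ and one must show $\int_0^1 t^c(t-1)^b(t+1)^a\,dt + \int_0^1 t^c(t-1)^a(t+1)^b\,dt \ne 0$. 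I expect to handle these by a definiteness or strict-monotonicity argument: on $(0,1)$ one has $t-1<0<t+1$, so after pulling out the sign $(t-1)^{\min(a,b)}$ etc., the relevant integrands are of constant sign, forcing nonvanishing unless the symmetrizing condition $a=b$ holds. Concretely, I would write $I(a,b,c)=\int_0^1 t^c(1-t)^{\min(a,b)}(1+t)^{\min(a,b)}\bigl[(1-t)^{|a-b|}\pm(1+t)^{|a-b|}\bigr]\,dt$ up to an overall sign, and observe that the bracketed factor is either strictly negative (odd case) or, when $a\ne b$, strictly positive (even case) throughout $(0,1)$ since $1+t>1-t>0$ there; the weight $t^c(1-t)^{\min(a,b)}(1+t)^{\min(a,b)}$ is nonnegative and positive on a set of full measure, so the integral cannot vanish. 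The delicate bookkeeping is tracking the various sign factors $(-1)^{a+b+c}$ and the parity of $|a-b|$ correctly, and confirming that the weight is genuinely nonzero on $(0,1)$; this is where I would be most careful.
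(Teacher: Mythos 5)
Your part (1) is correct and is essentially the paper's argument: evaluating at $(1,-1)$ gives one direction, and homogeneity gives the converse (the paper makes this explicit by expanding $p=\sum_i\lambda_i(x+y)^ix^{n-i}$ and extracting $\lambda_0=0$, which is the same content as your ``vanishes on the line, hence divisible by the linear form''). Part (2) also follows the paper's route exactly — combine the two integrals via $t\mapsto -t$ and then run a sign-definiteness argument — but your central identity is wrong by a sign. The substitution $t=-u$ reverses orientation, $\int_0^{-1}f(t)\,dt=-\int_0^1 f(-u)\,du$, so the correct identity is
\[
I(a,b,c)=\int_0^1 t^c(t-1)^b(t+1)^a\,dt\;-\;(-1)^{a+b+c}\int_0^1 t^c(t-1)^a(t+1)^b\,dt,
\]
with a relative \emph{minus} sign, not the plus sign you wrote. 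A sanity check exposes this: for $a=b=c=0$ one has $I(0,0,0)=\int_0^1dt+\int_0^{-1}dt=0$, consistent with the proposition ($a=b$, $a+b+c$ even), whereas your identity would give $2$.

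You then make a second error that happens to cancel the first: you assert that the prefactor $1+(-1)^{a+b+c}$ vanishes when $a+b+c\in2\ZZ$, but $1+(-1)^{\mathrm{even}}=2$; it is the correct prefactor $1-(-1)^{a+b+c}$ that vanishes for even $a+b+c$. So your final parity condition comes out right only because two sign mistakes cancel, and as written both the displayed identity and the sufficiency step are false statements; your odd/even case labels in the necessity sketch inherit the same flip. Once the sign is repaired, your definiteness argument is precisely the paper's: with $n=|a-b|$ and weight $w(t)=t^c(1-t)^{\min(a,b)}(1+t)^{\min(a,b)}\geq0$, positive on a set of full measure in $(0,1)$, one gets $I=\pm\int_0^1 w(t)\bigl[(1-t)^n-(-1)^c(1+t)^n\bigr]\,dt$; since $0<1-t<1+t$ on $(0,1)$, the bracket has constant strict sign unless $n=0$ and $c$ is even (equivalently $a=b$ and $a+b+c\in2\ZZ$), in which case it vanishes identically. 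This is exactly the sign bookkeeping you flagged as the delicate point, and it is where your write-up fails.
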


\begin{proof}
$(1)$ Let $\theta_{a,b,c}(x+y) \in (x+y)S$. Substituting $x = 1$ and $y = -1$, it follows that $I(a,b,c) = 0$. Conversely, assume that $I(a,b,c) = 0$. Since $p(x,y) := \theta_{a,b,c}(x + y)$ is a homogeneous polynomial of some degree $n$, it can be written in the form
    $$
    p(x, y) = \theta_{a,b,c}(x + y) = \sum_{i=0}^{n} \lambda_i (x + y)^i x^{n-i},
    $$
    for some constants $\lambda_0, \ldots, \lambda_n$. Evaluating $p$ at $(1, -1)$ yields
    $$
    p(1, -1) = \lambda_0 \cdot 1 = 0,
    $$
    which implies $\lambda_0 = 0$, and hence $\theta_{a,b,c}(x + y) \in (x + y)S$.

    $(2)$ Assume without loss of generality that $a \leq b$, and set $n = b - a$. The case $b < a$ can be treated analogously. We compute:
    \begin{align*}
    I(a,b,c)
    &= \int_{0}^{1} t^c (t - 1)^b (t + 1)^a \, dt + \int_{0}^{-1} t^c (t - 1)^b (t + 1)^a \, dt \\
    &= \int_{0}^{1} 
    \underbrace{t^c (t - 1)^a (t + 1)^a}_{=: f(t)}
    \underbrace{\left((t - 1)^n - (-1)^{a + b + c} (t + 1)^n \right)}_{=: g(t)} \, dt.
    \end{align*}
    The function $f(t)$ does not change sign on the interval $(0,1)$. The same is true for $g(t)$, since for $t \in (0,1)$ and $n > 0$, we have $(t + 1)^n > (t - 1)^n$. Therefore, $I(a,b,c) = 0$ if and only if $g(t) = 0$ for all $t \in (0,1)$. This is only the case when $n = 0$ and $a + b + c \in 2\mathbb{Z}$. In particular, $n = 0$ implies $a = b$.
\end{proof}

\begin{proof}[Proof of Corollary \ref{corollary: deletion exponents appendixproof}]
    Assuming that $m_4=1$, we have 
    $Q(\CA(B_2),m-\delta_{H_4})={x}^{m_1}{y}^{m_2}(x-y)^{m_3}=Q(\CA(A_2),(m_1,m_2,m_3))$. By Theorem \ref{theorem:FWY}, the multiplicity $m$ is a peak point if and only if 
        $\theta_{a,b,c}(x+y)\in(x+y)S,$ where $(a,b,c):=(\frac{-m_1+m_2+m_3-1}{2},\frac{m_1-m_2+m_3-1}{2},\frac{m_1+m_2-m_3-1}{2})$.
        By Proposition \ref{proposition:differentialformula} (1) and (2), this  condition holds if and only if 
        $a=b$ and $a+b+c\in2\ZZ$, which is equivalent to 
        $m_1=m_2$ and $m_1+m_2+m_3+1\in4\ZZ$. Since this also implies that $m_3\in2\ZZ+1$, these are precisely equivalent to those stated in Corollary \ref{thm:main}, up to some permutation as noted in Remark \ref{remark:permutation}.  
\end{proof}

\bibliographystyle{amsalpha}

\begin{thebibliography}{2309.01287}
\bibitem[A13]{MR3070120}
T. Abe, {\it Chambers of 2-affine arrangements and freeness of 3-arrangements}, J. Algebraic Combin. {\bf 38} (2013), no.~1, 65--78.

\bibitem[ANN09]{abenuidanumata:signedeliminable}
	T.~Abe, K.~Nuida, and Y.~Numata, 
	{\it Signed-eliminable graphs and free multiplicities on the
		braid arrangement}. 
	J. Lond. Math. Soc. (2) \textbf{80} (2009), no. 1, 121–134.
    
\bibitem[AN12]{MR2873095}
T. Abe and Y. Numata, {\it Exponents of 2-multiarrangements and multiplicity lattices}, J. Algebraic Combin. {\bf 35} (2012), no.~1, 1--17.

\bibitem[ATW07]{ATW07}
T. Abe, H. Terao and M.~D. Wakefield, {\it The characteristic polynomial of a multiarrangement}, Adv. Math. {\bf 215} (2007), no.~2, 825--838.

\bibitem[ATW08]{MR2400395}
T. Abe, H. Terao and M.~D. Wakefield, {\it The {E}uler multiplicity and addition-deletion theorems for multiarrangements}, J. London Math. Soc. {\bf 77} (2008), no.~2, 335--348.

\bibitem[AY23]{2306.11310}
T. Abe and T. Yamaguchi, {\it Free paths of arrangements of hyperplanes}, \href{https://arxiv.org/abs/2306.11310}{2306.11310}.

\bibitem[FWY25]{2309.01287}
M. Feigin, Z. Wang and M. Yoshinaga, {\it Integral expressions for derivations of multiarrangements}, Internat. J. Math. \textbf{36} (2025), no. 4, Paper No. 2450085, 20 pp.

\bibitem[MN25]{2312.06356}
S. Maehara and Y. Numata, {\it Explicit description of bases for derivations of some Coxeter multiarrangements of type $B_2$}, Commun. Algebra (2025). 

\bibitem[MRW24]{2406.19866}
P. Mücksch, G. Röhrle and S.Wiesner, {\it Free multiderivations of connected subgraph arrangements}, \href{https://arxiv.org/abs/2406.19866}{2406.19866}.

\bibitem[OT92]{MR1217488}
P. Orlik and H. Terao, {\it Arrangements of hyperplanes}, Grundlehren der mathematischen Wissenschaften, 300, Springer, Berlin, 1992.

\bibitem[S80]{MR0586450}
K. Saito, {\it Theory of logarithmic differential forms and logarithmic vector fields}, J. Fac. Sci. Univ. Tokyo Sect. IA Math. {\bf 27} (1980), no.~2, 265--291.

\bibitem[T80]{Add-Del-Terao}
H. Terao, {\it Arrangements of hyperplanes and their freeness I, II}, J. Fac. Sci. Univ. Tokyo 27 (1980), 293–320.

\bibitem[T81]{MR0608532} H. Terao, {\it Generalized exponents of a free arrangement of hyperplanes and Shepherd-Todd-Brieskorn formula}, Invent. Math. \textbf{63} (1981), no. 1, 159--179.

\bibitem[W07]{MR2328057}
A. Wakamiko, {\it On the exponents of 2-multiarrangements}, Tokyo J. Math. {\bf 30} (2007), no.~1, 99--116.

\bibitem[WY07]{MR2309190}
M.~D. Wakefield and S. Yuzvinsky, {\it Derivations of an effective divisor on the complex projective line}, Trans. Amer. Math. Soc. {\bf 359} (2007), no.~9, 4389--4403.

\bibitem[Y04]{MR2077250}
M. Yoshinaga, {\it Characterization of a free arrangement and conjecture of Edelman and Reiner}, Invent. Math. {\bf 157} (2004), no.~2, 449--454.

\bibitem[Y05]{MR2105827}
M. Yoshinaga, {\it On the freeness of 3-arrangements}, Bull. London Math. Soc. {\bf 37} (2005), no.~1, 126--134.

\bibitem[Y10]{MR3025868}
M. Yoshinaga, {\it On the extendability of free multiarrangements}, in {\it Arrangements, local systems and singularities}, 273--281, Progr. Math., 283, Birkh\"auser Verlag, Basel.

\bibitem[Y14]{MR3205600}
M. Yoshinaga, {\it Freeness of hyperplane arrangements and related topics}, Ann. Fac. Sci. Toulouse Math. (6) {\bf 23} (2014), no.~2, 483--512.

\bibitem[Z86]{MR1000610}
G.~M. Ziegler, {\it Multiarrangements of hyperplanes and their freeness}, in {\it Singularities (Iowa City, IA, 1986)}, 345--359, Contemp. Math., 90, Amer. Math. Soc., Providence, RI.

\end{thebibliography}
\newcommand{\etalchar}[1]{$^{#1}$}
\providecommand{\bysame}{\leavevmode\hbox to3em{\hrulefill}\thinspace}
\providecommand{\MR}{\relax\ifhmode\unskip\space\fi MR }

\end{document}